\newtheorem{theorem}{Theorem}[section]
\newtheorem{lemma}[theorem]{Lemma} 
\newtheorem{corollary}[theorem]{Corollary} 
\newtheorem{proposition}[theorem]{Proposition} 
\newtheorem{conjecture}[theorem]{Conjecture} 
\theoremstyle{definition}
\newtheorem{remark}[theorem]{Remark} 
\definecolor{pink}{rgb}{1,0,1}
\def\beq{\begin{eqnarray}}
\def\eeq{\end{eqnarray}}
\newcommand{\nn}{\nonumber}
\newcommand{\vol}{\operatorname{vol}}
\newcommand*{\R}{\mathbb{R}}
\newcommand*{\N}{\mathbb{N}}
\newcommand*{\Z}{\mathbb{Z}}
\newcommand{\pa}{\partial}
\newcommand{\zetasq} {\zeta_{\square}}
\newcommand{\heatsqd} {H_\square ^D}
\newcommand{\heatsqn} {H_\square ^N}
\newcommand{\zetaeq} {\zeta_{\nabla}}
\newcommand{\heateqd} {H_\nabla ^D}
\newcommand{\heateqn} {H_\nabla ^N}
\newcommand{\zetart} {\zeta_{\diamondsuit}}
\newcommand{\heatrtd} {H_{\diamondsuit} ^D}
\newcommand{\heatrtn} {H_{\diamondsuit} ^N}
\newcommand{\zetahemi} {\zeta_{\heartsuit}}
\newcommand{\heathemid} {H_{\heartsuit} ^D}
\newcommand{\heathemin} {H_{\heartsuit} ^N}
\title{Spectral invariants of integrable polygons}
\author[G.\@ M\aa rdby]{Gustav M\aa rdby}
\address{Mathematical Sciences, Chalmers University of Technology and University
of Gothenburg, 412 96 Gothenburg, Sweden}
\email{mardby@chalmers.se}
\author[J.\@ Rowlett]{Julie Rowlett}
\address{Mathematical Sciences, Chalmers University of Technology and University
of Gothenburg, 412 96 Gothenburg, Sweden}
\email{julie.rowlett@chalmers.se}
\begin{document}
\maketitle
\begin{abstract}
An integrable polygon is one whose interior angles are fractions of $\pi$; that is to say of the form $\frac \pi n$ for positive integers $n$.  We consider the Laplace spectrum on these polygons with the Dirichlet and Neumann boundary conditions, and we obtain new spectral invariants for these polygons.  This includes new expressions for the spectral zeta function and zeta-regularized determinant as well as a new spectral invariant contained in the short-time asymptotic expansion of the heat trace.  Moreover, we demonstrate relationships between the short-time heat trace invariants of general polygonal domains (not necessarily integrable) and smoothly bounded domains and pose conjectures and further related directions of investigation. 
\end{abstract}

%\keywords{Laplace spectrum, Helmholtz equation, polygonal domain, eigenvalues, heat trace, spectral zeta function, zeta-regularized determinant, polygonal billiard, geodesic}

%MSC:  35P15, 58J35, 11M36, 53C22

\section{Introduction} \label{s:intro}
Let $\Omega \subset \R^2$ be a bounded domain in the Euclidean plane. We consider the Laplace eigenvalue problem, also known as Helmholtz's equation, that is to find all eigenfunctions $u$ and eigenvalues $\lambda$ such that 
\begin{equation} \label{eq:def_laplacian}
    \Delta u + \lambda u = 0 \text{ in } \Omega, \quad \Delta = \partial_x^2 + \partial_y^2.
\end{equation}
The Helmholtz equation is perhaps the most fundamental partial differential equation of mathematics and physics, with numerous real-world applications including acoustic design, structural engineering, diffusion processes, quantum mechanics, and wave propagation.  The function $u$ is assumed to be in the Sobolev space $H^2(\Omega)$, and is further required to satisfy a boundary condition.  Here, we consider the Dirichlet or Neumann boundary conditions, which respectively require the function or its normal derivative to vanish on the boundary.  The set of Laplace eigenvalues is the \em spectrum, \em and any quantity that is defined in terms of the spectrum is a \em spectral invariant.  \em   

Although there exist fast and accurate methods for numerically calculating Laplace eigenvalues \cite{colbrook}, the collection of domains for which the eigenvalues can be computed analytically in closed form is quite limited.  Restricting to polygonal domains in the plane, this collection includes rectangles, equilateral triangles, isosceles right triangles, and hemi-equilateral triangles, also known as 30-60-90 triangles. By \cite{gutkin1986billiards} these are precisely the polygons which are integrable, meaning they have all interior angles of the form $\pi/n$ where $n \in \N$.  It is a straightforwad exercise in planar geometry to prove that all integrable polygons must be one of these four types as shown in Figures \ref{fig:rect} --  \ref{fig:hemiq}.  By \cite{mccartin2008polygonal} (see also \cite[Thm. 1]{rowlett2021crystallographic}), the polygonal domains which strictly tessellate the plane are precisely rectangles, equilateral triangles, isosceles right triangles, and the 30-60-90 triangle. Therefore, polygons being integrable is equivalent to strictly tessellating the plane.  

\begin{figure} [h!]
%\begin{center}
    \begin{tikzpicture}
    \draw (0,-1) rectangle (8,2);

    % Label "a" for the horizontal side
    %\draw[<->] (0,-0.3) -- (10,-0.3);
    \node at (4,-1.3) {$a$};
    
    % Label "b" for the vertical side
    %\draw[<->] (-0.3,0) -- (-0.3,6);
    \node at (-0.3,1/2) {$b$};

    \draw[->] (2,-1) -- (2,0); % up
    \draw (2,-1) -- (2,2);     % 
    \draw (2,0) -- (2,1);     % 
    \draw[->] (2,2) -- (2,1); % down

    % Add a filled-in point on the rectangle
    \filldraw[black] (2,-1) circle (2pt);  % Filled-in circle at (2,0)
    \filldraw[black] (2,2) circle (2pt);  % Filled-in circle at (2,3)
\end{tikzpicture} 
\caption{Rectangles are integrable polygons, as their interior angles all measure $\frac \pi 2$.  If the rectangle has sides of lengths $a$ and $b$, then the length of the shortest closed geodesic is twice the length of the shortest side, corresponding to the orbit running perpendicularly between the two longer sides}
\label{fig:rect} 
\end{figure}
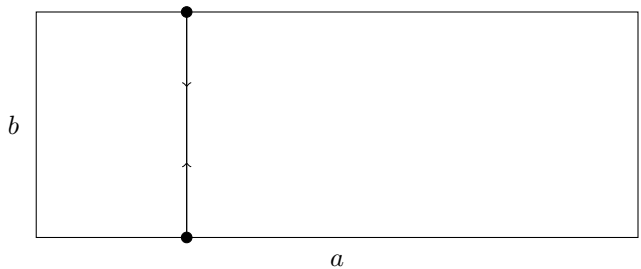

The Laplace eigenfunctions and eigenvalues of rectangles may be obtained using separation of variables by solving the one-dimensional Helmholtz equation.  This was known to mathematicians and physicists in the 18th century; however proving that \em all \em eigenfunctions and eigenvalues are obtained by this method could not be demonstrated until functional analysis was developed in the 19th century \cite{folland2009fourier}. 
The eigenfunctions of a square that are odd along a diagonal produce Dirichlet eigenvalues of isosceles right triangles \cite{kuttler1984eigenvalues}.  For equilateral triangles, Lamé was the first to obtain expressions for eigenvalues and eigenfunctions  \cite{lame1833memoire, lame1852leccons, lame1861leccons}.  At the end of the 19th century these eigenvalues and eigenfunctions were studied further by Pockels, who also noticed that the eigenfunctions of a regular rhombus and a regular hexagon are not trigonometric, i.e. cannot be expressed in terms of sines and cosines \cite{pockels1891uber}.  It was not until 2008 that McCartin proved that in fact, the only polygonal domains that have a complete set of trigonometric eigenfunctions are the integrable polygonal domains \cite{mccartin2008polygonal}.  Rowlett et. al. generalized this result to higher dimensions, where polygons are replaced by polytopes \cite{rowlett2021crystallographic}.  

\begin{figure} [h!]
%\begin{center}
    \begin{tikzpicture}
    % Equilateral triangle

    \draw (2,-5.5) -- (6,-5.5) -- (4,{4*sqrt(3)/2-5.5}) -- cycle;
    \draw[->] (4,-5.5) -- (4.5,{sqrt(3)/2-5.5});
    \draw (4.5,{sqrt(3)/2-5.5}) -- (5,{sqrt(3)-5.5});
    \draw[->] (5,{sqrt(3)-5.5}) -- (4,{sqrt(3)-5.5});
    \draw (4,{sqrt(3)-5.5}) -- (3,{sqrt(3)-5.5});
    \draw[->] (3,{sqrt(3)-5.5}) -- (3.5,{sqrt(3)/2-5.5});
    \draw (3.5,{sqrt(3)/2-5.5}) -- (4,-5.5);
    
    \filldraw[black] (4,-5.5) circle (2pt);
    \filldraw[black] (5,{sqrt(3)-5.5}) circle (2pt);
    \filldraw[black] (3,{sqrt(3)-5.5}) circle (2pt);

    \node at (4,-5.8) {$\ell$};
    \node at (5.2,{-5.5+1.2*sqrt(3)}) {$\ell$};
    \node at (2.8,{-5.5+1.2*sqrt(3)}) {$\ell$};
\end{tikzpicture} 
\caption{Equilateral triangles are integrable polygons, as their interior angles all measure $\frac \pi 3$.  Their shortest closed geodesic is formed by connecting the midpoints of the three sides, creating an equilateral triangle with sides of length $\frac \ell 2$ and therewith total length $3 \frac \ell 2$.}
\label{fig:eqtri}

\end{figure}
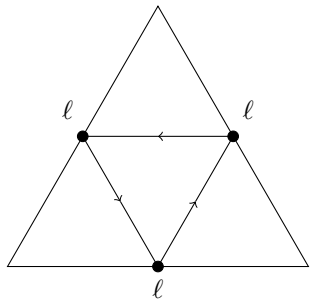 

Although Lam\'e obtained closed formulas for eigenvalues and eigenfunctions of equilateral triangles, similar to the case of rectangles, it is another matter to prove that these are \em all \em eigenvalues.  Indeed, this was first rigorously demonstrated in the 1980s by Pinsky who gave a new way of deriving the eigenvalues and eigenfunctions and established completeness \cite{pinsky1980eigenvalues, pinsky1985completeness}. In 1998, Prager gave a different method for deriving the eigenvalues and eigenfunctions of the equilateral triangle \cite{prager1998eigenvalues}, and in 2002 McCartin gave another elementary method \cite{mccartin2003eigenstructure, mccartin2002eigenstructure, mccartin2004eigenstructure}. McCartin also showed that the eigenfunctions of an equilateral triangle with either two Dirichlet and one Neumann boundary condition or one Dirichlet and two Neumann boundary conditions are not trigonometric \cite{mccartin2003eigenstructure}.  Similar to rectangles and isosceles right triangles, one obtains the eigenvalues of hemi-equilateral (30-60-90) triangles by considering the eigenfunctions of equilateral triangles that are odd along a nodal line \cite{mccartin2003eigenstructure}. 

In 1957 Brownell studied arbitrary polygonal domains with Dirichlet boundary conditions and conjectured that their heat trace expansion only has three terms \cite{brownell1957improved}. A complicated and somewhat incomplete proof of this was given in \cite{bailey1961removal, bailey1962removal}. In 1988 van den Berg and Srisatkunarajah improved this result by obtaining a bound on the error term after the three coefficients \cite{van1988heat}. Using the explicit expressions of the eigenvalues, Verhoeven calculated the heat trace of rectangles, isosceles right triangles, and equilateral triangles in his Bachelor thesis \cite{verhoevencan}.  Although he did not explicitly compute the sharp remainder term, Verhoeven's techniques help us to obtain the sharp remainder term in the short time asymptotic expansion of the heat trace for integrable polygons.  In the case of equilateral triangles, we compute the heat trace via an independent method and show that our expression is in fact equal to Verhoeven's.  In doing so, we are able to prove that a certain expression for the eigenvalues of the equilateral triangle that has appeared in the literature without proof or justification is indeed correct.  Moreover, we obtain expressions for the spectral zeta functions and zeta-regularized determinants of integrable polygons, some of which appear to be new.  By presenting these spectral invariants in their most explicit form, we aim to facilitate both practical computations and theoretical insights. Notably, our investigation of the heat trace leads to a conjecture for a new spectral invariant for convex polygonal domains, namely the length of their shortest closed geodesic.  

\begin{theorem} \label{th:shortest} 
Let $\Omega$ be an integrable polygonal domain.  Let $\{ \lambda_k \}_{k \geq 1}$ denote the Dirichlet eigenvalues of $\Omega$.  Then the heat trace has the short time asymptotic expansion 
\[ \sum_{k \geq 1} e^{-\lambda_k t} = \frac{ |\Omega|}{4\pi t} - \frac{|\pa \Omega|}{8 \sqrt{\pi t}} + \sum_{j=1}^n \frac{\pi^2 - \theta_j^2}{24 \pi \theta_j} + \mathcal O (e^{-\frac{L^2 + \epsilon}{4t}} ), \quad t \to 0, \quad \forall \epsilon > 0.\]
Above, $|\Omega|$ is the area of $\Omega$, $|\pa \Omega|$ is its perimeter, $\theta_j$ are the measures of its interior angles, $n$ is the number of sides, and $L$ is the length of the shortest closed geodesic in $\Omega$.  If instead $\{ \mu_k \}_{k \geq 0}$ denote the Neumann eigenvalues of $\Omega$, then the heat trace 
\[ \sum_{k \geq 0} e^{-\mu_k t} = \frac{ |\Omega|}{4\pi t} + \frac{|\pa \Omega|}{8 \sqrt{\pi t}} + \sum_{j=1}^n \frac{\pi^2 - \theta_j^2}{24 \pi \theta_j} + \mathcal O (e^{-\frac{L^2 + \epsilon}{4t}} ), \quad t \to 0, \quad \forall \epsilon > 0.\]
The remainder estimate in both cases is sharp in the sense that $\frac{L^2}{4}$ is the infimum over all $c>0$ such that the remainder is $\mathcal O (e^{-c/t})$ as $t \to 0$. 
\end{theorem}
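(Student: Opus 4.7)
The plan is to exploit the classification recalled in the introduction: every integrable polygon is a rectangle, an equilateral triangle, an isosceles right triangle, or a 30-60-90 triangle. Since closed-form expressions for the Laplace eigenvalues are available in each of these four cases, the strategy is to compute the heat trace case-by-case as an explicit lattice sum, apply the Jacobi theta identity $\sum_{n \in \Z} e^{-\pi n^2 x} = x^{-1/2} \sum_{n \in \Z} e^{-\pi n^2/x}$, and read off the asymptotic expansion. The $n=0$ term of the Poisson-dual sum will produce the polynomial-in-$t^{-1/2}$ part of the expansion, while the $n \neq 0$ terms will collect into an exponentially small remainder whose rate of decay is controlled by the shortest non-zero vector of the dual lattice.

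First, for a rectangle $[0,a] \times [0,b]$, the Dirichlet eigenvalues $\pi^2(m^2/a^2+n^2/b^2)$ make the heat trace factor as a product of two one-dimensional theta series. Applying the Jacobi identity to each factor separately produces $\frac{ab}{4\pi t} - \frac{a+b}{4\sqrt{\pi t}} + \frac{1}{4} + R(t)$, where $R(t)$ is a finite sum of cross-terms; the slowest decaying contribution is of order $e^{-\min(a,b)^2/t}$, and since the shortest closed geodesic of the rectangle has length $L = 2\min(a,b)$, this matches the claimed rate $e^{-L^2/(4t)}$. The Neumann case is identical, with sign changes on the half-integer terms giving $+|\partial\Omega|/(8\sqrt{\pi t})$ instead of $-|\partial\Omega|/(8\sqrt{\pi t})$. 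For the equilateral triangle of side $\ell$ I would use the Lamé-type formula (established elsewhere in the paper) which expresses the eigenfunctions as antisymmetrisations over the dihedral group $D_3$ of plane waves supported on a hexagonal lattice. The resulting heat trace becomes an alternating sum of two-dimensional theta series over this lattice, and a two-dimensional Poisson summation gives the polynomial part together with an exponentially small remainder whose leading rate matches $L^2/4 = 9\ell^2/16$, i.e.\ the length $3\ell/2$ of the medial triangle depicted in Figure~\ref{fig:eqtri}.

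The two remaining triangles are handled via the classical reflection principle: the Dirichlet eigenfunctions of an isosceles right triangle are precisely those of the enclosing square which are antisymmetric along the diagonal, and the Dirichlet eigenfunctions of the 30-60-90 triangle are those of the equilateral triangle antisymmetric along its axis of symmetry. Thus the heat trace of each of these triangles equals one half of the difference between the heat trace of the parent domain with Dirichlet data and the heat trace of a lower-dimensional boundary (the line segment across which one reflects); both have already been expanded in the first step, and the resulting asymptotic inherits the exponentially small remainder from the parent domain, with $L$ recomputed as the length of the shortest closed geodesic of the half-domain. Neumann eigenfunctions arise from the symmetric counterparts and are handled analogously. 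Collecting all four cases, one checks that the polynomial-in-$t^{-1/2}$ part matches the universal formula $|\Omega|/(4\pi t) \pm |\partial\Omega|/(8\sqrt{\pi t}) + \sum_j (\pi^2 - \theta_j^2)/(24\pi\theta_j)$, confirming on integrable polygons the general corner contribution of \cite{van1988heat}.

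The principal obstacle is the sharpness claim. For this one must show that the coefficient of $e^{-L^2/(4t)}$ does not vanish in any of the four cases. In the rectangular case this is immediate since the leading cross-term after Poisson summation has a strictly positive coefficient. In the triangular cases the coefficient arises from antisymmetrising a theta-function contribution over a finite reflection group, and a priori it could cancel. I expect to rule this out by identifying the corresponding shortest lattice vectors in the Poisson-dual picture and verifying by an explicit count that their orbit under the relevant dihedral group does not sum to zero; equivalently, one exhibits a closed geodesic of length $L$ in $\Omega$ whose trajectory is not related to another geodesic of the same length by an odd isometry. The length $L$ is then realised as the infimum of $c$ for which the remainder is $\mathcal O(e^{-c/t})$, completing the proof.
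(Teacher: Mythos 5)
Your proposal follows essentially the same route as the paper: the paper proves Theorem \ref{th:shortest} precisely by treating the four cases (rectangle, equilateral, isosceles right, hemi-equilateral) separately in \S\ref{s:rectangles}--\S\ref{s:306090}, using the explicit eigenvalues, Poisson summation on the resulting theta series, and the reflection/antisymmetrization reduction of the two right triangles to the square and equilateral triangle, then matching the exponent against the known shortest-geodesic lengths. The cancellation worry you raise for sharpness does not materialize: the explicit expansions (Theorems \ref{th:eqtri_asy_htrace}, \ref{th:new_heat_isosceles}, \ref{th:new_heat_306090}) exhibit manifestly nonzero coefficients on the leading exponentially small terms, which is exactly the ``explicit count'' you propose.
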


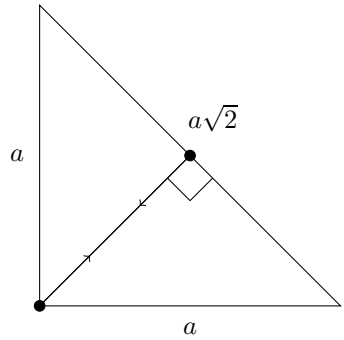
\begin{figure} [h!]
%\begin{center}
    \begin{tikzpicture}
    % Isosceles right triangle
    \draw (2,-10.5) -- (6,-10.5) -- (2,-6.5) -- cycle;
    \draw[->] (2,-10.5) -- (2.67,-11.33+1.5);
    \draw (2,-10.5) -- (4,-8.5);
    \draw[->] (4,-8.5) -- (3.33,-10.67+1.5);
    \draw (3.33,-10.67+1.5) -- (2.67,-11.33+1.5);
    % Right angle
    \draw (3.7,-10.3+1.5) -- (4,-10.6+1.5);
    \draw (4,-10.6+1.5) -- (4.3,-10.3+1.5);
    
    \filldraw[black] (2,-10.5) circle (2pt);
    \filldraw[black] (4,-8.5) circle (2pt);

    \node at (4,-12.3+1.5) {$a$};
    \node at (1.7,-8.5) {$a$};
    \node at (4.3,-8) {$a\sqrt{2}$};

    \end{tikzpicture} 
\caption{Isosceles right triangles are integrable polygons, as their interior angles measure $\frac \pi 2$ and $\frac \pi 4$.  Their shortest closed geodesic is the altitude that joins the right angle to the hypotenuse.}
\label{fig:isoright}
\end{figure}

This result is not surprising if one considers flat tori, as their heat trace consists of a leading term involving the volume, and an exponentially decaying remainder term with exponent of the same form as the remainder here for integrable polygons.  In the following sections, \S \ref{s:rectangles}--\S \ref{s:306090} we calculate the spectral zeta function, zeta-regularized determinant, and heat trace of rectangles, equilateral triangles, isosceles right triangles, and hemi-equilateral triangles.  In the last section of the article \S \ref{s:comparison} we present a brief comparison of the heat traces of flat tori, Euclidean space forms, convex polytopes, convex polygonal domains, and smoothly bounded domains.  We conjecture that for Euclidean space forms as well as convex polytopes, the heat trace has a short time asymptotic expansion with a rapidly decaying remainder term of the same type as that of flat tori and integrable polygons.  We then consider convex polygonal domains converging in the Hausdorff sense to a smoothly bounded domain and prove that the first three heat trace invariants converge to those of the smoothly bounded domain.  In contrast, if smoothly bounded domains converge in the Hausdorff sense to a convex polygonal domain, then only the first two heat trace invariants converge; we show that the third does not. We hope to provide an inclusive introduction to the Laplace eigenvalue problem suitable for a broad readership and at the same time, inspire those readers well-versed in the field to investigate the many remaining open problems. 

\begin{figure} [h!]
%\begin{center}
    \begin{tikzpicture}

    % Hemiequilateral triangle
    \draw (2,-16.5) -- (5,-16.5) -- (2,{-16.5+3*sqrt(3)}) -- cycle;
    \draw (2,-16.5) -- (17/4,{2.25/sqrt(3)-16.5});
    \draw[->] (2,-16.5) -- (11/4, {-16.5+0.75/sqrt(3)});
    \draw[->] (17/4,{2.25/sqrt(3)-16.5}) -- (7/2, {-16.5+sqrt(3)/2});
    \draw (11/4, {-16.5+0.75/sqrt(3)}) -- (7/2, {-16.5+sqrt(3)/2});
    \draw (31/8, {-16.5+1.875/sqrt(3)}) -- ({31/8+0.375/sqrt(3)},{-16.5-3/8+1.875/sqrt(3)});
    \draw ({31/8+0.375/sqrt(3)},{-16.5-3/8+1.875/sqrt(3)}) -- ({17/4+sqrt(3)/8}, {-16.5-3/8+0.75*sqrt(3)});
    
    \filldraw[black] (2,-16.5) circle (2pt);
    \filldraw[black] (17/4,{2.25/sqrt(3)-16.5}) circle (2pt);

    \node at (7/2, -18.3+1.5) {$\frac{\ell}{2}$};
    \node at (1.5, {-16.5+1.5*sqrt(3)}) {$\frac{\ell\sqrt{3}}{2}$};
    \node at (4.7, -16.44+1.5) {$\ell$};

\end{tikzpicture}
\caption{Hemi-equilateral triangles are integrable polygons, as their interior angles measure $\frac \pi 2$ and $\frac \pi 3$ and $\frac \pi 6$.  Their shortest closed geodesic is the altitude that joins the right angle to the hypotenuse.}
\label{fig:hemiq}

\end{figure}
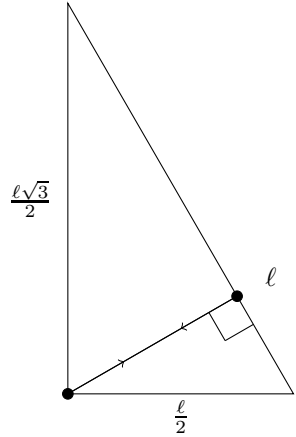

\section*{Acknowledgments} 
JR is grateful to Michiel van den Berg, Sebastian G\"otte, and Klaus Kr\"oncke for insightful suggestions.  Both authors thank Norm Hurt for correspondence that inspired this work.

\newpage
\section*{Notations and abbreviations} 
For the reader's convenience, we include a summary of our notations and abbreviations. \\ 

\begin{tabular}{|l|l|}
\hline $\Delta$ & the Laplace operator or Laplacian defined in \eqref{eq:def_laplacian} \\ 
\hline DBC & Dirichlet boundary condition \\ 
\hline NBC & Neumann boundary condition \\ 
\hline $\zeta_\square$ & the spectral zeta function for a rectangle $[0,a] \times [0,b]$   \\ 
\hline $G_\square(s)$ & the function defined in \eqref{eq:Gsquare} \\  
\hline $H_\square ^D$ & the Dirichlet heat kernel for a rectangle $[0,a] \times [0,b]$ \\  
\hline $H_\square ^N$ & the Neumann heat kernel for a rectangle $[0,a] \times [0,b]$  \\ 
\hline $\zeta_\blacksquare$ & the spectral zeta function for a square with sides of length $a$ \\

\hline 
$\zeta_{\nabla}$ & the spectral zeta function for an equilateral triangle with sides of length $\ell$   \\ 
\hline  
$G_\nabla$ & the function defined in \eqref{eq:Gtri} \\ 
\hline $H_{\nabla} ^D$ & the Dirichlet heat kernel for an equilateral triangle with sides of length $\ell$   \\ 
\hline $H_{\nabla} ^N$ & the Neumann heat kernel for an equilateral triangle with sides of length $\ell$   \\ 

\hline 
$\zeta_{\diamondsuit}$ & the spectral zeta function for an isosceles right triangle with legs of length $a$   \\ 
\hline $G_\diamondsuit$ & the function defined in \eqref{eq:Gdiamond} \\ 
\hline $H_{\diamondsuit} ^D$ & the Dirichlet heat kernel for an isosceles right triangle with legs of length $a$  \\ 
\hline $H_{\diamondsuit} ^N$ & the Neumann heat kernel for an isosceles right triangle with legs of length $a$  \\ 

\hline 
$\zetahemi$ & the spectral zeta function for a hemi-equilateral (30-60-90) triangle with hypotenuse $\ell$   \\ 
\hline $G_\heartsuit$ & the function defined in \eqref{eq:Gheart} \\ 
\hline $\heathemid$ & the Dirichlet heat kernel for a hemi-equilateral (30-60-90) triangle with hypotenuse $\ell$   \\ 
\hline $\heathemin$ & the Neumann heat kernel for a hemi-equilateral (30-60-90) triangle with hypotenuse $\ell$   \\ 
\hline $\zeta_R$ & the Riemann zeta function \\ 
\hline $\eta$ & the Dedekind eta function \\ 

\hline 
\end{tabular}

\section{Spectral invariants of rectangles} \label{s:rectangles}
Consider a rectangular domain $[0,a] \times [0,b]$ in the plane.  One can separate variables and solve the Laplace eigenvalue equation on the two segments $[0,a]$ and $[0,b]$, 
\beq u_{xx} + u_{yy} + \lambda u(x,y) = 0, \quad 0 < x < a, \quad 0<y<b, \label{eq:laplace_rect} \eeq 
imposing either the Dirichlet or Neumann boundary condition, respectively,  
\[ (\text{DBC}): u(0, y) = u(a,y) = u(x,0) = u(x,b) = 0, \]
\[ (\text{NBC}): u_x(0,y) = u_x(a,y) = u_y(x,0) = u_y (x,b) = 0.\]
In the Dirichlet case, the results of this calculation yields the eigenvalues and eigenfunctions 
\begin{equation}
    \lambda_{m,n} = \pi^2\left(\frac{m^2}{a^2} + \frac{n^2}{b^2}\right), \,\, m,n \geq 1, \quad u_{m,n}(x,y) = \sin(m\pi x/a) \sin(n \pi y/b). \label{eq:rect_ev_dbc}
\end{equation}
Having obtained these eigenvalues and eigenfunctions using separation of variables, one may follow \cite[p. 83]{larsson2003partial} to prove completeness, in other words that these are indeed \em all \em eigenfunctions and eigenvalues.  

\subsection{The spectral zeta function and zeta-regularized determinant of rectangles} \label{ss:zeta_rect}
By our calculation of the eigenvalues under the Dirichlet boundary condition in \eqref{eq:rect_ev_dbc} the spectral zeta function is 
\begin{equation*}
    \zetasq(s) = \frac{1}{\pi^{2s}}\sum_{m = 1}^\infty\sum_{n = 1}^\infty \frac{1}{(\frac{m^2}{a^2} + \frac{n^2}{b^2})^s}.
\end{equation*}
We begin by computing two equivalent expressions for the spectral zeta function.  These expressions are relatively simple to obtain using known results, but it seems that the first expression is less widespread.   

\begin{proposition} \label{prop:zeta_det_rect} The spectral zeta function of a rectangle of dimensions $a \times b$ with the Dirichlet boundary condition is equivalently given by the expressions 
\begin{align*}
    \zetasq(s) &= \frac{1}{2}\left(\frac{b}{\pi}\right)^{2s}\bigg[-\zeta_R(2s) + \frac{a\sqrt{\pi}}{b}\frac{\zeta_R(2s-1)\Gamma(s-1/2)}{\Gamma(s)}\bigg] \\ &+ \left(\frac{ab}{\pi}\right)^s\frac{1}{\Gamma(s)}\sqrt{\frac{a}{b}}\sum_{n=1}^\infty n^{s-1/2}\sum_{d|n}d^{1-2s} \int_0^\infty x^{s-3/2}e^{-\pi an(x+x^{-1})/b} dx
\end{align*}
and 
\begin{equation*}
    \zetasq(s) = \frac{a^{2s}}{4}\bigg[G_\square(s) - 2\left(\frac{b}{a\pi}\right)^{2s}\zeta_R(2s) - \frac{2}{\pi^{2s}}\zeta_R(2s)\bigg]. 
\end{equation*}
Here $\zeta_R$ denotes the Riemann zeta function, and 
\begin{equation}
    G_\square(s) = \sum_{m \in \Z} \sum_{n \in \Z} \frac{1}{\pi^{2s}|m + nz|^{2s}}, \quad z = ai/b.  \label{eq:Gsquare}
\end{equation}
\end{proposition}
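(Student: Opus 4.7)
The plan is to treat the two equivalent expressions separately. The second is essentially a symmetrization identity, while the first is a Chowla--Selberg-type decomposition obtained via Mellin transform and partial Poisson summation.

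For the second expression, I would rewrite
\[\zetasq(s) = \frac{a^{2s}}{\pi^{2s}}\sum_{m,n\geq 1}\frac{1}{(m^2 + (a/b)^2 n^2)^s}\]
and note that $|m + n\cdot ai/b|^2 = m^2 + (a/b)^2 n^2$. The symmetry $(m,n)\mapsto (\pm m, \pm n)$ then yields
\[4\sum_{m,n\geq 1}\frac{1}{(m^2+(a/b)^2 n^2)^s} = \sum_{(m,n)\in\Z^2\setminus\{(0,0)\}}\frac{1}{(m^2+(a/b)^2 n^2)^s} - 2\sum_{m\neq 0}m^{-2s} - 2(b/a)^{2s}\sum_{n\neq 0}n^{-2s},\]
and recognizing the three sums on the right as $\pi^{2s}G_\square(s)$, $4\zeta_R(2s)$, and $4(b/a)^{2s}\zeta_R(2s)$ respectively gives the stated identity after dividing by $4$ and reinstating the prefactor $a^{2s}/\pi^{2s}$.

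For the first expression, I would use the Mellin identity $X^{-s} = \Gamma(s)^{-1}\int_0^\infty t^{s-1}e^{-tX}dt$ to rewrite
\[\zetasq(s) = \frac{1}{\pi^{2s}\Gamma(s)}\int_0^\infty t^{s-1}\Big(\sum_{m\geq 1}e^{-tm^2/a^2}\Big)\Big(\sum_{n\geq 1}e^{-tn^2/b^2}\Big)\,dt\]
and apply Poisson summation only to the $m$-sum to obtain
\[\sum_{m\geq 1}e^{-tm^2/a^2} = \frac{a\sqrt{\pi}}{2\sqrt{t}} - \frac{1}{2} + a\sqrt{\pi/t}\sum_{m\geq 1}e^{-\pi^2 a^2 m^2/t}.\]
Substituting splits the integral into three pieces. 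The first two, depending only on the $n$-sum, evaluate via elementary Mellin transforms to the $\Gamma(s-1/2)\zeta_R(2s-1)$ and $\zeta_R(2s)$ boundary terms, which together produce the bracketed expression $\tfrac{1}{2}(b/\pi)^{2s}[\cdots]$. The third piece is a double sum which I would handle using the standard evaluation $\int_0^\infty t^{s-3/2}e^{-At-B/t}dt = 2(B/A)^{(s-1/2)/2}K_{s-1/2}(2\sqrt{AB})$ with $A=n^2/b^2$ and $B=\pi^2 a^2 m^2$, and then return to integral form via $2K_\nu(z) = \int_0^\infty x^{\nu-1}e^{-z(x+x^{-1})/2}dx$ to recover the integrand displayed in the proposition.

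The main obstacle is the Dirichlet hyperbola rearrangement needed to produce the divisor sum $\sum_{d|n}d^{1-2s}$. After the Bessel conversion, the summand depends on $(m,n)$ only through the product $N = mn$ and the ratio $m/n$; setting $N = mn$, summing $m$ over divisors of $N$, and applying the identity $\sigma_{2s-1}(N) = N^{2s-1}\sigma_{1-2s}(N)$ collapses the double sum into $\sum_{N\geq 1} N^{s-1/2}\sigma_{1-2s}(N)\int_0^\infty x^{s-3/2}e^{-\pi aN(x+x^{-1})/b}dx$. Simplifying the accumulated prefactor $a\sqrt{\pi}\cdot(\pi ab)^{s-1/2}/\pi^{2s}$ to $(ab/\pi)^s\sqrt{a/b}$ then matches the stated formula. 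The remaining delicate point is justifying the interchanges of summation and integration, which holds for $\mathrm{Re}(s)$ sufficiently large where everything converges absolutely and extends by analytic continuation in the standard way.
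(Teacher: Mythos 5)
Your derivation of the first (Chowla--Selberg type) expression is correct and is in fact more self-contained than the paper's: the paper converts $\zetasq$ into a full-lattice sum and then simply cites Selberg--Chowla for the identity
\[
\sum_{m \in \Z}\sum_{n \in \Z}\Big(\tfrac{m^2}{a^2}+\tfrac{n^2}{b^2}\Big)^{-s} = 2a^{2s}\zeta_R(2s) + \frac{2ab^{2s-1}\sqrt{\pi}\,\zeta_R(2s-1)\Gamma(s-1/2)}{\Gamma(s)} + Q(s),
\]
whereas you rederive the analytically substantive step from scratch via the Mellin representation, Poisson summation in the $m$-variable only, the $K$-Bessel evaluation, and the hyperbola rearrangement $\sum_{mn=N}(m/n)^{s-1/2} = N^{s-1/2}\sum_{d\mid N}d^{1-2s}$. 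Your prefactor bookkeeping $a\sqrt{\pi}\,(\pi ab)^{s-1/2}\pi^{-2s} = (ab/\pi)^{s}\sqrt{a/b}$ checks out, and working directly with the quadrant sum $\sum_{m,n\ge 1}$ lets you skip the paper's preliminary symmetrization for this half of the proposition. The paper's route buys brevity; yours buys a proof that does not lean on an external reference, at the cost of the convergence justifications you correctly flag at the end.

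There is, however, a factor-of-two slip in your symmetrization for the second expression. The correct decomposition counts each half-axis family once:
\[
\sum_{(m,n)\in\Z^2\setminus\{(0,0)\}}\frac{1}{(m^2+(a/b)^2n^2)^s} = 4\sum_{m,n\ge 1}\frac{1}{(m^2+(a/b)^2n^2)^s} + \sum_{m\neq 0}|m|^{-2s} + \left(\frac{b}{a}\right)^{2s}\sum_{n\neq 0}|n|^{-2s},
\]
and each axis sum already equals $2\zeta_R(2s)$ (resp.\ $2(b/a)^{2s}\zeta_R(2s)$). You instead subtract $2\sum_{m\neq 0}|m|^{-2s}$ and $2(b/a)^{2s}\sum_{n\neq 0}|n|^{-2s}$ and identify these with $4\zeta_R(2s)$ and $4(b/a)^{2s}\zeta_R(2s)$; carried through, this yields
\[
\zetasq(s) = \frac{a^{2s}}{4}\bigg[G_\square(s) - \frac{4}{\pi^{2s}}\zeta_R(2s) - 4\left(\frac{b}{a\pi}\right)^{2s}\zeta_R(2s)\bigg],
\]
which disagrees with the stated formula, whose coefficients are $2$. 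Dropping the spurious factor of $2$ in front of the axis sums fixes this; the rest of that argument then coincides with the paper's.
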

\begin{proof}
Note that
\begin{align*}
    &\frac{1}{\pi^{2s}}\sum_{m \in \Z} \sum_{n \in \Z} \frac{1}{(\frac{m^2}{a^2} + \frac{n^2}{b^2})^s} \\ 
    &= \frac{4}{\pi^{2s}} \sum_{m= 1}^\infty\sum_{n=1}^\infty \frac{1}{(\frac{m^2}{a^2} + \frac{n^2}{b^2})^s} + 2\left(\frac{a^2}{\pi^2}\right)^s\zeta_R(2s) + 2\left(\frac{b^2}{\pi^2}\right)^s\zeta_R(2s) \\
    &= 4\zetasq(s) + \frac{2\zeta_R(2s)(a^{2s} + b^{2s})}{\pi^{2s}}.
\end{align*}

Here, we use the slight abuse of notation 
\beq  \sum_{m \in \Z} \sum_{n \in \Z} = \sum_{(m,n) \in \Z \times \Z \setminus \{(0,0)\}}. \label{eq:abuse} \eeq
  Consequently, 
\begin{equation*}
    \zetasq(s) = \frac{1}{\pi^{2s}}\sum_{m=1}^\infty \sum_{n=1}^\infty \frac{1}{(\frac{m^2}{a^2} + \frac{n^2}{b^2})^s} = \frac{1}{4\pi^{2s}}\bigg[\sum_{m \in \Z} \sum_{n \in \Z} \frac{1}{(\frac{m^2}{a^2} + \frac{n^2}{b^2})^s} - 2\zeta_R(2s)(a^{2s}+b^{2s})\bigg].
\end{equation*}
By \cite[p. 87]{selberg1967epstein}, we have
\begin{align*}
    &\sum_{m \in \Z} \sum_{n \in \Z} \frac{1}{(\frac{m^2}{a^2} + \frac{n^2}{b^2})^s} = 2a^{2s}\zeta_R(2s) + \frac{2ab^{2s-1}\sqrt{\pi}\zeta_R(2s-1)\Gamma(s-1/2)}{\Gamma(s)} + Q(s), \\
    &Q(s) = \frac{4(\pi ab)^s}{\Gamma(s)}\sqrt{\frac{a}{b}} \sum_{n=1}^\infty n^{s-1/2}\sum_{d|n}d^{1-2s} \int_0^\infty x^{s-3/2}e^{-\pi an(x+x^{-1})/b} dx.
\end{align*}
Therefore,
\begin{align*}
    \zetasq(s) &= \frac{1}{2}\left(\frac{b}{\pi}\right)^{2s}\bigg[-\zeta_R(2s) + \frac{a\sqrt{\pi}}{b}\frac{\zeta_R(2s-1)\Gamma(s-1/2)}{\Gamma(s)}\bigg] \\ &+ \left(\frac{ab}{\pi}\right)^s\frac{1}{\Gamma(s)}\sqrt{\frac{a}{b}}\sum_{n=1}^\infty n^{s-1/2}\sum_{d|n}d^{1-2s} \int_0^\infty x^{s-3/2}e^{-\pi an(x+x^{-1})/b} dx.
\end{align*}
To obtain the second expression for $\zetasq(s)$ we use the Dedekind eta function 
\begin{equation}
    \eta(\tau) = q^{1/12}\prod_{n=1}^\infty (1 - q^{2n}), \,\, q = e^{\pi i \tau}, \,\, \text{Im}(\tau) > 0. \label{eq:dedekind}
\end{equation}
Then, 
\begin{align*}
    \zetasq(s) = \sum_{m=1}^\infty \sum_{n=1}^\infty \frac{1}{\pi^{2s}(\frac{m^2}{a^2} + \frac{n^2}{b^2})^s} = a^{2s} \sum_{m=1}^\infty \sum_{n=1}^\infty \frac{1}{\pi^{2s}(m^2 + \frac{n^2a^2}{b^2})^s} = a^{2s} \sum_{m=1}^\infty \sum_{n=1}^\infty \frac{1}{\pi^{2s}|m + nz|^{2s}},
\end{align*}
where $z = ai/b$. Since
\begin{align*}
    \sum_{m \in \Z} \sum_{n \in \Z} \frac{1}{\pi^{2s}|m + nz|^{2s}} = 4\sum_{m=1}^\infty \sum_{n=1}^\infty \frac{1}{\pi^{2s}|m + nz|^{2s}} + 2\left(\frac{b}{a\pi}\right)^{2s}\zeta_R(2s) + \frac{2}{\pi^{2s}}\zeta_R(2s),
\end{align*}
it follows that
\begin{align*}
    \sum_{m=1}^\infty \sum_{n=1}^\infty \frac{1}{\pi^{2s}|m + nz|^{2s}} = \frac{1}{4}\bigg[\sum_{m \in \Z} \sum_{n \in \Z} \frac{1}{\pi^{2s}|m + nz|^{2s}} - 2\left(\frac{b}{a\pi}\right)^{2s}\zeta_R(2s) - \frac{2}{\pi^{2s}}\zeta_R(2s)\bigg].
\end{align*}
If we write 
\begin{equation*}
    G_\square(s) = \sum_{m \in \Z} \sum_{n \in \Z} \frac{1}{\pi^{2s}|m + nz|^{2s}},
\end{equation*}
then we obtain
\begin{equation*}
    \zetasq(s) = \frac{a^{2s}}{4}\bigg[G_\square(s) - 2\left(\frac{b}{a\pi}\right)^{2s}\zeta_R(2s) - \frac{2}{\pi^{2s}}\zeta_R(2s)\bigg].
\end{equation*}
\end{proof} 
These expressions allow one to meromorphically extend the spectral zeta function to the complex plan and evaluate it at points of interest.  In particular, as a corollary we obtain two expressions for the zeta-regularized determinant. 

\begin{corollary} \label{cor:zeta_det_rect}
The zeta-regularized determinant of a rectangle of dimensions $a \times b$ with the Dirichlet boundary condition is $e^{-\zetasq'(0)}$ with $\zetasq'(0)$ equivalently given by 
\begin{align*}
    \zetasq'(0) &= \frac{1}{2}\log(2b) + \frac{\pi a}{12 b} + \sum_{n=1}^\infty \frac{1}{ne^{2\pi an/b}}\sum_{d|n} d, \\
    \zetasq'(0) &= \frac{1}{2}\log\left(\frac{2b}{|\eta(z)|^2}\right)= \frac{1}{2}\log(2b) + \frac{\pi a}{12 b} - \sum_{n=1}^\infty \log(1- e^{-2\pi n a/b}), \quad z = ai/b. 
\end{align*}
\end{corollary}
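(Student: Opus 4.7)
The plan is to obtain both expressions by directly differentiating the first formula for $\zeta_\square(s)$ from Proposition \ref{prop:zeta_det_rect} at $s=0$, and then to pass between the two forms via a Lambert-series identity together with the product expansion of the Dedekind eta function. The crucial observation is that $1/\Gamma(s) = s + \gamma s^2 + O(s^3)$ vanishes at $s=0$ with derivative equal to $1$; hence in every summand of the formula that carries a factor of $1/\Gamma(s)$, only the value of the remaining factors at $s=0$ (multiplied by $1$) contributes to the derivative at zero. This causes a clean separation into a ``non-series'' derivative and a ``series'' value.

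For the non-series part $\tfrac12(b/\pi)^{2s}\bigl[-\zeta_R(2s) + \tfrac{a\sqrt{\pi}}{b}\tfrac{\zeta_R(2s-1)\Gamma(s-1/2)}{\Gamma(s)}\bigr]$, the bracket at $s=0$ reduces to $-\zeta_R(0) = 1/2$ since $1/\Gamma(0)=0$, while its derivative at $s=0$ equals $-2\zeta_R'(0) + \tfrac{a\sqrt{\pi}}{b}\zeta_R(-1)\Gamma(-1/2)$. Using the standard values $\zeta_R'(0) = -\tfrac12\log(2\pi)$, $\zeta_R(-1) = -1/12$ and $\Gamma(-1/2) = -2\sqrt{\pi}$, a short product-rule calculation yields $\tfrac12\log(2b) + \pi a/(12b)$. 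For the series part, only its value at $s=0$ matters, which in turn requires the integral $\int_0^\infty x^{-3/2}e^{-\pi an(x+x^{-1})/b}\,dx$. I would evaluate this via the Bessel identity $K_\nu(z) = \tfrac12\int_0^\infty u^{\nu-1}e^{-(z/2)(u+u^{-1})}\,du$ with $\nu = -1/2$; since $K_{1/2}(y) = \sqrt{\pi/(2y)}\,e^{-y}$, the integral equals $\sqrt{b/(an)}\,e^{-2\pi an/b}$. Multiplying by the surviving prefactors $\sqrt{a/b}\cdot n^{-1/2}\sigma(n)$ and summing produces $\sum_{n\geq 1}\tfrac{1}{ne^{2\pi an/b}}\sum_{d\mid n}d$, yielding the first expression of the corollary.

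To identify this with $\tfrac12\log(2b/|\eta(z)|^2)$, I would invoke the Lambert-series identity $\sum_{n\geq 1}\tfrac{\sigma(n)}{n}q^n = -\sum_{n\geq 1}\log(1-q^n)$, obtained by expanding $-\log(1-q^n) = \sum_{k\geq 1}q^{nk}/k$, interchanging the order of summation, and using $\sum_{k\mid N}1/k = \sigma(N)/N$. Setting $q = e^{-2\pi a/b}$ converts the divisor sum into $-\sum_{n\geq 1}\log(1-e^{-2\pi na/b})$, and the product formula \eqref{eq:dedekind} evaluated at $\tau = ai/b$ gives $\log|\eta(z)|^2 = -\pi a/(6b) + 2\sum_{n\geq 1}\log(1-e^{-2\pi na/b})$, which recombines the three terms into $\tfrac12\log(2b/|\eta(z)|^2)$. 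The main obstacle is purely bookkeeping: tracking the $1/\Gamma(s)$ cancellations and the various $\sqrt{a/b}$ and $n^{-1/2}$ factors without sign or factor-of-two errors; conceptually every step is entirely explicit once the Bessel integral is recognized.
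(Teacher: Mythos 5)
Your derivation of the first expression is essentially the paper's: differentiate the first formula of Proposition \ref{prop:zeta_det_rect}, exploit the fact that $1/\Gamma(s)$ vanishes to first order at $s=0$ with $(1/\Gamma)'(0)=1$, and evaluate the integral via $K_{1/2}(y)=\sqrt{\pi/(2y)}\,e^{-y}$; your bookkeeping (the value $-\zeta_R(0)=1/2$ of the bracket, the derivative $-2\zeta_R'(0)+\tfrac{a\sqrt{\pi}}{b}\zeta_R(-1)\Gamma(-1/2)$, and the survival of only $\sqrt{a/b}\cdot S(0)$ from the series term) all checks out and reproduces $\tfrac12\log(2b)+\tfrac{\pi a}{12b}+\sum_{n\ge1}\tfrac{\sigma(n)}{n}e^{-2\pi an/b}$. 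One point you pass over: the principle ``only the value at $s=0$ of the factor multiplying $1/\Gamma(s)$ contributes'' requires that this factor — here an infinite sum of divisor sums times parameter-dependent integrals — be differentiable with bounded derivative near $s=0$. That is not automatic and is exactly what the paper's Appendix Lemma \ref{lemma_vanishes} establishes (uniform convergence of the termwise-differentiated series plus differentiation under the integral sign); you should cite or reprove that estimate rather than assume it.

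For the second expression you take a genuinely different route. The paper differentiates the \emph{second} formula of Proposition \ref{prop:zeta_det_rect} and imports the Osgood--Phillips--Sarnak values $G_\square(0)=-1$ and $G_\square'(0)=-\tfrac{1}{12}\log\bigl((2\pi)^{24}|\eta(z)|^{48}/\pi^{24}\bigr)$, obtaining $\tfrac12\log(2b/|\eta(z)|^2)$ independently, and only afterwards verifies consistency with the first expression via the Lambert-series identity $\sum_{n\ge1}\tfrac{\sigma(n)}{n}q^n=-\sum_{n\ge1}\log(1-q^n)$. You instead use that same Lambert-series identity, together with $\log|\eta(z)|^2=-\tfrac{\pi a}{6b}+2\sum_{n\ge1}\log(1-e^{-2\pi an/b})$ from \eqref{eq:dedekind}, to \emph{derive} the eta-form directly from the first expression. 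Your route is more elementary and self-contained (no appeal to the extremal-determinant literature), at the cost of losing the independent cross-check that the paper's two-pronged computation provides; both are valid proofs of the stated identities.
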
 
\begin{proof} 
We differentiate our first expression from Proposition \ref{prop:zeta_det_rect} and obtain
\begin{align*}
    \zetasq'(s) &= \log\left(\frac{b}{\pi}\right)\left(\frac{b}{\pi}\right)^{2s}\bigg[-\zeta_R(2s) + \frac{a\sqrt{\pi}}{b}\frac{\zeta_R(2s-1)\Gamma(s-1/2)}{\Gamma(s)}\bigg] \\
    &+ \frac{1}{2}\left(\frac{b}{\pi}\right)^{2s}\bigg[-2\zeta_R'(2s) + \frac{a\sqrt{\pi}}{b}\frac{1}{\Gamma(s)^2}\bigg[
    2\Gamma(s)\zeta_R'(2s-1)\Gamma(s-1/2) \\ &+ \Gamma(s)\zeta_R(2s-1)\Gamma'(s-1/2) 
    - \Gamma'(s)\zeta_R(2s-1)\Gamma(s-1/2)\bigg]\bigg] \\
    &+ \frac{d}{ds}\left[\left(\frac{ab}{\pi}\right)^s\frac{1}{\Gamma(s)}\sqrt{\frac{a}{b}}\sum_{n=1}^\infty n^{s-1/2}\sum_{d|n}d^{1-2s} \int_0^\infty x^{s-3/2}e^{-\pi an(x+x^{-1})/b} dx\right].
\end{align*}
We write 
\begin{align*}
    &\frac{d}{ds}\left[\left(\frac{ab}{\pi}\right)^s\frac{1}{\Gamma(s)}\sqrt{\frac{a}{b}}\sum_{n=1}^\infty n^{s-1/2}\sum_{d|n}d^{1-2s} \int_0^\infty x^{s-3/2}e^{-\pi an(x+x^{-1})/b} dx\right] \\
    = &-\left(\frac{ab}{\pi}\right)^s\frac{\Gamma'(s)}{\Gamma(s)^2}\sqrt{\frac{a}{b}}\sum_{n=1}^\infty n^{s-1/2}\sum_{d|n}d^{1-2s} \int_0^\infty x^{s-3/2}e^{-\pi an(x+x^{-1})/b} dx \\
    &+ \frac{1}{\Gamma(s)}\frac{d}{ds}\left[\left(\frac{ab}{\pi}\right)^s\sqrt{\frac{a}{b}}\sum_{n=1}^\infty n^{s-1/2}\sum_{d|n}d^{1-2s} \int_0^\infty x^{s-3/2}e^{-\pi an(x+x^{-1})/b} dx\right].
\end{align*}
In Lemma \ref{lemma_vanishes} we prove that we may differentiate termwise and under the integral in the second term above, and thereby obtain that this term vanishes due to the presence of $\frac 1 \Gamma$. We then obtain 
\begin{align*}
    \zetasq'(0) &= \log\left(\frac{b}{\pi}\right)\bigg[-\zeta_R(0)\bigg] + \frac{1}{2}\bigg[-2\zeta_R'(0) - \frac{a\sqrt{\pi}\Gamma'(0)\zeta_R(-1)\Gamma(-1/2)}{b\Gamma(0)^2}\bigg] \\
    &- \frac{\Gamma'(0)}{\Gamma(0)^2}\sqrt{\frac{a}{b}}\sum_{n=1}^\infty n^{-1/2}\sum_{d|n}d \int_0^\infty x^{-3/2}e^{-\pi an(x+x^{-1})/b} dx.
\end{align*}
Above, the terms involving $\frac{\Gamma'(0)}{\Gamma(0)^2}$ should be interpreted as $-\frac{d}{ds}\frac{1}{\Gamma(s)}$ evaluated at $s=0$. Since $\Gamma$ has a simple pole at zero with
\begin{equation*} 
    \lim_{s\to 0} s\Gamma(s) = 1,
\end{equation*}
it follows that 
\begin{equation} \label{eq:gamma_function_simple_pole}
    \frac{\Gamma'(0)}{\Gamma(0)^2} = -1.
\end{equation}
To compute
\begin{equation*}
    \int_0^\infty x^{-3/2}e^{-\pi an(x+x^{-1})/b} dx,
\end{equation*}
we make the change of variable $x = e^t$ to get
\begin{equation*}
    \int_0^\infty x^{-3/2}e^{-\pi an(x+x^{-1})/b} dx = \int_{-\infty}^\infty e^{-t/2} e^{-2\pi an/b \cosh(t)} dt.
\end{equation*}
Since $e^{-t/2} = \cosh(t/2) - \sinh(t/2)$, we can rewrite this as
\begin{equation*}
    \int_{-\infty}^\infty (\cosh(t/2) - \sinh(t/2))e^{-2\pi an/b \cosh(t)} dt = 2\int_0^\infty \cosh(t/2)e^{-2\pi an/b \cosh(t)} dt, 
\end{equation*}
which by \cite[Eq. 10.32.9]{NIST:DLMF} equals $2K_{1/2}(2\pi a n/b)$. Since $K_{1/2}(z) = \sqrt{\frac{\pi}{2z}} e^{-z}$ (see \cite[Eq. 10.39.2]{NIST:DLMF}), we obtain
\begin{equation} \label{eq:miracle_integral}
    \int_0^\infty x^{-3/2}e^{-\pi an(x+x^{-1})/b} dx = \sqrt{\frac{b}{an}} e^{-2\pi an/b}.
\end{equation}
Moreover, we have
\begin{equation} \label{eq:values_Riemann_zeta_Gamma}
    \zeta_R(0) = -\frac{1}{2}, \,\, \zeta_R'(0) = -\frac{\log(2\pi)}{2}, \,\, \zeta_R(-1) = -\frac{1}{12}, \,\, \Gamma(-1/2) = -2\sqrt{\pi}.
\end{equation}
Thus,
\begin{equation*}
    \zetasq'(0) = \frac{1}{2}\log(2b) + \frac{\pi a}{12 b} + \sum_{n=1}^\infty \frac{1}{ne^{2\pi an/b}}\sum_{d|n} d.
\end{equation*}
Next we differentiate the second expression of Proposition \ref{prop:zeta_det_rect} 
\begin{align*}
    \zetasq'(s) &= a^{2s}\log(a)\left[\frac{G_\square(s)}{2} - \left(\frac{b}{a\pi}\right)^{2s}\zeta_R(2s) - \frac{1}{\pi^{2s}}\zeta_R(2s)\right] \\
    &+ a^{2s}\bigg[\frac{G_\square'(s)}{4} - \left(\frac{b}{a\pi}\right)^{2s}\log\left(\frac{b}{a\pi}\right)\zeta_R(2s) - \left(\frac{b}{a\pi}\right)^{2s}\zeta_R'(2s) + \frac{1}{\pi^{2s}}\log(\pi)\zeta_R(2s) - \frac{1}{\pi^{2s}}\zeta_R'(2s)\bigg],
\end{align*}
which, after inserting $s = 0$, becomes
\begin{align*}
    \zetasq'(0) = \log(a)\left[\frac{G_\square(0)}{2} + 1\right] + \frac{G_\square'(0)}{4} + \frac{1}{2}\log\left(\frac{4b}{a}\right).
\end{align*}
We have by \cite[p. 204-205]{osgood1988extremals} (see also \cite[p. 1830-1831]{aldana2018polyakov}),
\begin{equation*}
    G_\square(0) = -1, \,\, G_\square'(0) = -\frac{1}{12}\log\left((2\pi)^{24}\frac{(\eta(z)\bar{\eta}(z))^{24}}{\pi^{24}}\right),
\end{equation*}
and since
\begin{equation*}
    \log(\eta(z)) = -\frac{\pi a}{12b} + \sum_{n=1}^\infty \log(1 - e^{-2\pi n a/b}),
\end{equation*}
we obtain
\begin{equation*}
    \zetasq'(0) = \frac{1}{2}\log\left(\frac{2b}{|\eta(z)|^2}\right)= \frac{1}{2}\log(2b) + \frac{\pi a}{12 b} - \sum_{n=1}^\infty \log(1- e^{-2\pi n a/b}).
\end{equation*}
It may be of some interest to verify explicitly that our two expressions of $\zetasq'(0)$ are equal.  
To this end, it is enough to show that
\begin{equation*}
    -\sum_{n = 1}^\infty \log(1 - q^n) = \sum_{n = 1}^\infty \frac{q^n}{n} \sum_{d | n} d
\end{equation*}
for $|q| < 1$. We Taylor expand $\log(1-q^n)$ around 0 and obtain
\begin{align*}
    -\sum_{n = 1}^\infty \log(1 - q^n) = -\sum_{n = 1}^\infty \sum_{m = 1}^\infty \frac{(-1)^{m+1}(-q^n)^m}{m} = \sum_{n = 1}^\infty \sum_{m = 1}^\infty \frac{q^{nm}}{m} = \sum_{m = 1}^\infty \frac{1}{m} \sum_{n = 1}^\infty q^{nm}.
\end{align*}
If we write this 
as a power series in $q$, we see that $q^n$ gets a contribution of $1/m$ if and only $m$ divides $n$. Thus, 
\begin{align*}
    -\sum_{n = 1}^\infty \log(1 - q^n) = \sum_{n = 1}^\infty q^n \sum_{d|n} \frac{1}{d} = \sum_{n = 1}^\infty \frac{q^n}{n} \sum_{d|n} \frac{n}{d} = \sum_{n = 1}^\infty \frac{q^n}{n} \sum_{d|n} d,
\end{align*}
where the last equality follows from the fact that $n/d \mapsto d$ is a bijection between the divisors of $n$. This completes the proof that the two expressions for $\zetasq'(0)$ are equal. 
\end{proof}

\subsection{The heat trace of a rectangle and its short time asymptotic expansion} \label{ss:heat_rect}
For the rectangle $[0,a] \times [0,b]$, the Dirichlet heat trace  
\begin{equation*}
    \heatsqd(t) = \sum_{\lambda_{m,n}} e^{-\lambda_{m,n}t} = \sum_{m=1}^\infty \sum_{n=1}^\infty e^{-\pi^2(\frac{m^2}{a^2} + \frac{n^2}{b^2})t} = \sum_{m=1}^\infty e^{-\frac{\pi^2m^2t}{a^2}} \sum_{n=1}^\infty e^{-\frac{\pi^2n^2t}{b^2}}.
\end{equation*}
Since 
\begin{equation*}
    \sum_{n=1}^\infty q^{n^2} = \frac{\Theta_3(q)-1}{2}, \quad \Theta_3(q) = \sum_{n \in \Z} q^{n^2}, 
\end{equation*}
the heat trace 
\begin{equation*}
    \heatsqd(t) = \frac{\bigl(\Theta_3(e^{-\pi^2 t/a^2}) - 1\bigr)\bigl(\Theta_3(e^{-\pi^2 t/b^2}) - 1\bigr)}{4}.
\end{equation*}
Although we will not use this expression, we present it for the sake of completeness.  Our focus here is on the asymptotic expansion of $\heatsqd(t)$ as $t \to 0$. 

For any convex polygon $\Omega$ in the Euclidean plane with interior angles $\gamma_1,\dots,\gamma_n$, its heat trace with Dirichlet boundary conditions admits the asymptotic expansion 
\begin{equation*}
     \frac{|\Omega|}{4\pi t} - \frac{|\partial\Omega|}{8\sqrt{\pi t}} + \sum_{i=1}^n \frac{\pi^2-\gamma_i^2}{24\pi\gamma_i} + \mathcal{O}(e^{-c/t}), \,\, t \to 0,
\end{equation*}
for some $c > 0$ that has been estimated in \cite{van1988heat}.  For the Neumann boundary condition, an analogous estimate with such a remainder term remains an open problem.   Verhoeven \cite{verhoevencan} used Poisson's summation formula to obtain an expression that can be used not only to obtain further terms in the asymptotic expression, but also to determine the infinum of all such $c$ for the remainder estimate above.  We provide the result here as well as the corresponding result for the Neumann boundary condition.  It is interesting to note that certain terms appear with different signs according to the different boundary conditions.  

\begin{theorem} [Verhoeven \cite{verhoevencan}] \label{th:new_heat_rect}
Let $\Omega = [0,a] \times [0,b]$ be a rectangle that is not a square.  Then the heat trace with the Dirichlet boundary condition admits the asymptotic expansion 
\begin{align*}
    \heatsqd(t) &= \frac{ab}{4\pi t} - \frac{a+b}{4\sqrt{\pi t}} + \frac{1}{4} + \frac{ab}{2 \pi t} e^{-\frac{\min(a,b)^2}{t}} - \frac{\min(a,b)}{2\sqrt{\pi t}} e^{-\frac{\min(a,b)^2}{t}}  \\ 
    &+ \mathcal O \left(t^{-1} e^{-\frac{c}{t}}  \right), \quad t \to 0, \quad c = \min ( \max(a, b)^2, 4 \min(a, b)^2 ).
\end{align*}
The heat trace with the Neumann boundary condition admits the asymptotic expansion 
\begin{align*}
    \heatsqn(t) &= \frac{ab}{4\pi t} + \frac{a+b}{4\sqrt{\pi t}} + \frac{1}{4} + \frac{ab}{2 \pi t} e^{-\frac{\min(a,b)^2}{t}} + \frac{\min(a,b)}{2\sqrt{\pi t}} e^{-\frac{\min(a,b)^2}{t}}  \\ 
    &+ \mathcal O \left(t^{-1} e^{-\frac{c}{t}} \right ), \quad t \to 0.
\end{align*}
If the rectangle is a square, $a=b$, then the respective expansions are 
\begin{align*}
    \heatsqd(t) &= \frac{a^2}{4\pi t} - \frac{a}{2\sqrt{\pi t}} + \frac{1}{4} + \frac{a^2}{ \pi t} e^{-\frac{a^2}{t}} - \frac{a}{\sqrt{\pi t}} e^{-\frac{a^2}{t}}  + \mathcal O (t^{-1} e^{-\frac{2a^2}{t}}  ), \quad t \to 0, \\
    \heatsqn(t) &= \frac{a^2}{4\pi t} + \frac{a}{2\sqrt{\pi t}} + \frac{1}{4} + \frac{a^2}{ \pi t} e^{-\frac{a^2}{t}} + \frac{a}{\sqrt{\pi t}} e^{-\frac{a^2}{t}}  + \mathcal O (t^{-1} e^{-\frac{2a^2}{t}}  ), \quad t \to 0.
\end{align*}
In all cases the remainders are sharp. 
\end{theorem}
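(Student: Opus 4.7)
The plan is to apply Poisson summation (equivalently, Jacobi's theta transformation) to the factorized expression for the heat trace already derived in the excerpt, namely
\[ \heatsqd(t) = \frac{\bigl(\Theta_3(e^{-\pi^2 t/a^2}) - 1\bigr)\bigl(\Theta_3(e^{-\pi^2 t/b^2}) - 1\bigr)}{4}, \]
together with the analogous Neumann identity
\[ \heatsqn(t) = \frac{\bigl(\Theta_3(e^{-\pi^2 t/a^2}) + 1\bigr)\bigl(\Theta_3(e^{-\pi^2 t/b^2}) + 1\bigr)}{4}, \]
which holds because the Neumann spectrum includes $m=0$ and $n=0$. The modular relation
\[ \Theta_3(e^{-\pi^2 t/a^2}) = \frac{a}{\sqrt{\pi t}}\,\Theta_3(e^{-a^2/t}) = \frac{a}{\sqrt{\pi t}}\Bigl(1 + 2\sum_{n \geq 1} e^{-a^2 n^2/t}\Bigr) \]
converts each factor into a form whose asymptotics as $t \to 0$ are transparent, since the tail is exponentially small. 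The idea is to substitute this in and multiply out.

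After substitution each factor reads $\Theta_3 \mp 1 = \frac{a}{\sqrt{\pi t}} \mp 1 + \frac{2a}{\sqrt{\pi t}}\sum_{n\geq 1} e^{-a^2 n^2/t}$ (and similarly with $b$). Expanding the product gives nine terms: three ``polynomial'' contributions $\frac{ab}{4\pi t} \mp \frac{a+b}{4\sqrt{\pi t}} + \frac{1}{4}$, four cross terms involving exactly one theta-tail, and one term with the product of two tails. Assume without loss of generality that $\min(a,b)=a$. The $n=1$ summand of the $a$-tail contributes $\frac{2a}{\sqrt{\pi t}} e^{-a^2/t}$, which when multiplied by $\frac{b}{\sqrt{\pi t}}$ and divided by $4$ produces the announced $\frac{ab}{2\pi t}e^{-a^2/t}$; multiplied by $\mp 1$ it produces $\mp \frac{a}{2\sqrt{\pi t}}e^{-a^2/t}$. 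All other one-tail contributions carry at least $e^{-b^2/t}$ (from $n=1$ in the $b$-tail) or $e^{-4a^2/t}$ (from $n \geq 2$ in the $a$-tail), and the two-tail term is $\mathcal{O}(t^{-1} e^{-(a^2+b^2)/t})$. Collecting, the error is $\mathcal{O}(t^{-1} e^{-c/t})$ with $c = \min(b^2, 4a^2) = \min(\max(a,b)^2, 4\min(a,b)^2)$. When $a=b$, the two competing contributions at exponent $a^2$ coincide and merge, doubling each of the two exponentially-small correction terms, and the next exponent is $2a^2$.

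For the remainder sharpness, one must show that the coefficient of $e^{-c/t}$ in whichever of the two potentially-leading omitted terms dictates $c$ is actually nonzero so no further cancellation improves the estimate. If $\max(a,b)^2 \leq 4\min(a,b)^2$, the surviving term is $\frac{ab}{2\pi t} e^{-b^2/t} \mp \frac{b}{2\sqrt{\pi t}}e^{-b^2/t}$, whose sum is nonzero for all small $t$; if the reverse inequality holds, the surviving term is $\frac{ab}{\pi t} e^{-4a^2/t} \mp \frac{a}{\sqrt{\pi t}} e^{-4a^2/t}$, again nonzero. In either case, \textbf{no} exponent strictly larger than $c$ works for the $\mathcal{O}$-bound, which is the definition of sharpness. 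Nonvanishing in the marginal case $\max(a,b)^2 = 4\min(a,b)^2$ requires combining the two contributions and checking the sum is not identically zero in $t$, which follows by comparing the $t^{-1}$ and $t^{-1/2}$ terms.

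The main obstacle is bookkeeping rather than conceptual: one must carefully separate the nine product terms and, for the sharpness statement, argue that neither the $(b^2)$-coefficients nor the $(4a^2)$-coefficients can vanish. A minor subtlety is that the bound on the two-tail term needs a uniform estimate of the sum $\sum_{n,m \geq 1} e^{-(a^2 n^2+b^2 m^2)/t}$ by $C e^{-(a^2+b^2)/t}$, which is immediate since the exponent is strictly larger than $c$ for all $(n,m)$ with $n,m \geq 1$. The Neumann case runs identically with the sign of the $\mp 1$ flipped throughout, producing the recorded sign changes on the $\frac{|\partial\Omega|}{8\sqrt{\pi t}}$ term and on the $e^{-\min(a,b)^2/t}/\sqrt{\pi t}$ correction.
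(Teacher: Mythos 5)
Your proposal is correct and takes essentially the same route as the paper: Poisson summation (equivalently, Jacobi theta inversion) applied to each one-dimensional factor, followed by multiplying out and collecting the polynomial, one-tail, and two-tail contributions, with the Neumann case obtained by flipping the sign of the $\mp 1$ in each factor. The only blemish is an inconsequential factor-of-two slip in the coefficient $\frac{ab}{\pi t}e^{-4\min(a,b)^2/t}$ (it should be $\frac{ab}{2\pi t}e^{-4\min(a,b)^2/t}$) quoted in your sharpness discussion, which does not affect the nonvanishing argument or any constant appearing in the statement.
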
 

\begin{proof}
By Poisson's summation formula (see \cite[Lemma 2.2.2]{verhoevencan})
\begin{align*}
    \sum_{m \in \Z} e^{-\frac{\pi^2m^2}{a^2}t} = \frac{a}{\sqrt{\pi t}}\sum_{m \in \Z} e^{-\frac{m^2a^2}{t}},
\end{align*}
which implies that
\begin{equation*}
    \sum_{m = 1}^\infty e^{-\frac{\pi^2m^2}{a^2}t} = \frac{1}{2}\left(\frac{a}{\sqrt{\pi t}} - 1\right) + \frac{a}{\sqrt{\pi t}} \sum_{m = 1} e^{-\frac{m^2 a^2}{t}}.
\end{equation*}
Consequently, as calculated in \cite{verhoevencan} it is straightforward to show that the Dirichlet heat trace 
\begin{align*}
    \heatsqd(t) &= 
    \left(\frac{1}{2}\left(\frac{a}{\sqrt{\pi t}} - 1\right) + \frac{a}{\sqrt{\pi t}}\sum_{m=1}^\infty e^{-\frac{m^2a^2}{t}}\right)\left(\frac{1}{2}\left(\frac{b}{\sqrt{\pi t}} - 1\right) + \frac{b}{\sqrt{\pi t}}\sum_{n=1}^\infty e^{-\frac{n^2b^2}{t}}\right) = \\
    &= \frac{ab}{4\pi t} - \frac{a+b}{4\sqrt{\pi t}} + \frac{1}{4} + \frac{ab}{2\pi t}\sum_{m = 1}^\infty e^{-\frac{m^2a^2}{t}} + \frac{ab}{2\pi t}\sum_{n = 1}^\infty e^{-\frac{n^2b^2}{t}} \\
    &- \frac{a}{2\sqrt{\pi t}}\sum_{m = 1}^\infty e^{-\frac{m^2a^2}{t}} - \frac{b}{2\sqrt{\pi t}}\sum_{n = 1}^\infty e^{-\frac{n^2b^2}{t}} + \frac{ab}{\pi t} \sum_{m=1}^\infty \sum_{n=1}^\infty e^{-\frac{m^2a^2 + n^2b^2}{t}}.
\end{align*}
The proof in the Dirichlet case is then completed by calculating the leading order terms and determining the remainder by analyzing each of the three series.  The eigenvalues of the rectangle $[0,a] \times [0,b]$ with Neumann boundary conditions are given by 
\begin{equation*}
    \lambda_{m,n} = \pi^2\left(\frac{m^2}{a^2} + \frac{n^2}{b^2}\right), \,\, m,n \geq 0,
\end{equation*}
so the heat trace becomes
\begin{align*}
    \heatsqn(t) &= \sum_{m=0}^\infty\sum_{n=0}^\infty e^{-\pi^2(\frac{m^2}{a^2} + \frac{n^2}{b^2})t} = \sum_{n=0}^\infty e^{-\frac{\pi^2n^2}{b^2}t} + \sum_{m=1}^\infty\left(e^{-\frac{\pi^2m^2}{a^2}t} + \sum_{n=1}^\infty e^{-\pi^2(\frac{m^2}{a^2} + \frac{n^2}{b^2})t}\right) \\
    &= 1 + \sum_{n=1}^\infty e^{-\frac{\pi^2n^2}{b^2}t} + \sum_{m=1}^\infty e^{-\frac{\pi^2m^2}{a^2}t} + \heatsqd(t)   \\ 
    &= 1+ \frac{1}{2}\left(\frac{b}{\sqrt{\pi t}} - 1\right) + \frac{b}{\sqrt{\pi t}}\sum_{n=1}^\infty e^{-\frac{n^2b^2}{t}} + \frac{1}{2}\left(\frac{a}{\sqrt{\pi t}} - 1\right) + \frac{a}{\sqrt{\pi t}}\sum_{m=1}^\infty e^{-\frac{m^2a^2}{t}} + \heatsqd(t) \\ 
    &= \frac{ab}{4\pi t} + \frac{a+b}{4\sqrt{\pi t}} + \frac{1}{4} + \frac{ab}{2\pi t}\sum_{m = 1}^\infty e^{-\frac{m^2a^2}{t}} + \frac{ab}{2\pi t}\sum_{n = 1}^\infty e^{-\frac{n^2b^2}{t}} \\
    &+ \frac{a}{2\sqrt{\pi t}}\sum_{m = 1}^\infty e^{-\frac{m^2a^2}{t}} + \frac{b}{2\sqrt{\pi t}}\sum_{n = 1}^\infty e^{-\frac{n^2b^2}{t}} + \frac{ab}{\pi t} \sum_{m=1}^\infty \sum_{n=1}^\infty e^{-\frac{m^2a^2 + n^2b^2}{t}}.
\end{align*}
Here we have used Poisson's summation formula and our calculation of $\heatsqd(t)$.  The proof in the Neumann case is similarly completed by reading off the leading order terms and collecting the remainder.  

When $a = b$, the heat traces simplify to
\begin{align*}
    \heatsqd(t) &= \frac{a^2}{4\pi t} - \frac{a}{2\sqrt{\pi t}} + \frac{1}{4} + \frac{a^2}{\pi t}\sum_{m = 1}^\infty e^{-\frac{m^2a^2}{t}} 
    - \frac{a}{\sqrt{\pi t}}\sum_{m = 1}^\infty e^{-\frac{m^2a^2}{t}} + \frac{a^2}{\pi t} \sum_{m=1}^\infty \sum_{n=1}^\infty e^{-\frac{(m^2+n^2)a^2}{t}}, \\
    \heatsqn(t) &= \frac{a^2}{4\pi t} + \frac{a}{2\sqrt{\pi t}} + \frac{1}{4} + \frac{a^2}{\pi t}\sum_{m = 1}^\infty e^{-\frac{m^2a^2}{t}} 
    + \frac{a}{\sqrt{\pi t}}\sum_{m = 1}^\infty e^{-\frac{m^2a^2}{t}} + \frac{a^2}{\pi t} \sum_{m=1}^\infty \sum_{n=1}^\infty e^{-\frac{(m^2+n^2)a^2}{t}},
\end{align*}
from which the claimed heat trace expansions follow.
\end{proof}

To compare with the estimates in \cite{van1988heat} and \cite{verhoevencan}, we note that as $t \to 0$,
\begin{align*}
    &\frac{ab}{2\pi t}\sum_{m=1}^\infty e^{-\frac{m^2 a^2}{t}} \text{ and } \frac{a}{2\sqrt{\pi t}}\sum_{m=1}^\infty e^{-\frac{m^2 a^2}{t}} \, \text{ are } \, \mathcal{O}(e^{-\frac{a^2 - \epsilon}{t}}) \, \text{ for any } \,\epsilon > 0 \, \text{ but not } \, \mathcal{O}(e^{-\frac{a^2}{t}}), \\
    &\frac{ab}{2\pi t}\sum_{n=1}^\infty e^{-\frac{n^2 b^2}{t}} \text{ and } \frac{b}{2\sqrt{\pi t}}\sum_{n=1}^\infty e^{-\frac{n^2 b^2}{t}} \, \text{ are } \, \mathcal{O}(e^{-\frac{b^2 - \epsilon}{t}}) \, \text{ for any } \,\epsilon > 0 \, \text{ but not } \, \mathcal{O}(e^{-\frac{b^2}{t}}), \\
    &\frac{ab}{\pi t} \sum_{m=1}^\infty \sum_{n=1}^\infty e^{-\frac{m^2 a^2 + n^2 b^2}{t}} \, \text{ is } \, \mathcal{O}(e^{-\frac{a^2+b^2 - \epsilon}{t}}) \, \text{ for any } \,\epsilon > 0 \, \text{ but not } \, \mathcal{O}(e^{-\frac{a^2+b^2}{t}}).
\end{align*}
The estimate of $c$ one obtains in \cite[Theorem 1]{van1988heat} in the case of a rectangle is $c=\frac{\min(a,b)^2}{64}$.  In particular, there is no minimal $c$ such that the error term is $\mathcal{O}(e^{-\frac{c}{t}})$, but the infimum of all such values is $\min(a,b)^2$.   By \cite[Prop. 8]{hezari2021dirichlet}, $\min(a,b)^2$ is the square of half the length of the shortest closed geodesic in the rectangle.  This is not surprising considering the analogous result one can obtain for flat tori as discussed in \S \ref{s:comparison}.  

We can easily generalize our heat trace result to $n$-dimensional Euclidean boxes. For a Euclidean box, $\prod_{j=1}^n [0,a_j]$, a similar calculation as above yields the heat trace in the Dirichlet and Neumann case
\begin{equation*}
    \frac{1}{2^n}\prod_{j=1}^n\left(\frac{a_j}{\sqrt{\pi t}} - 1\right) + \mathcal{O}(e^{-\frac{\min(a_1,\dots,a_n)^2-\epsilon}{t}}), \quad \forall \epsilon > 0, 
\end{equation*}
\begin{equation*}
     \frac{1}{2^n}\prod_{j=1}^n\left(\frac{a_j}{\sqrt{\pi t}} + 1\right) + \mathcal{O}(e^{-\frac{\min(a_1,\dots,a_n)^2-\epsilon}{t}}), \quad \forall \epsilon > 0,
\end{equation*}
respectively. We again observe that $\min(a_1,\dots,a_n)^2$ is the square of half the length of the shortest closed geodesic in this Euclidean box.

%%%%%%%%%%%%%%%%%%%%%%%%%%%%%%%%%%%%%%%%%%%%%%%%%
\section{Spectral invariants of equilateral triangles} \label{sec:pinsky_equilateral_triangle}
Let $\Omega = \{(x,y) \in \R^2 : 0 < y < x\sqrt{3}, \,\, y < \sqrt{3}(1-x)\}$ be an equilateral triangular domain.  We note that the sides each have length one.  We consider the Laplace eigenvalue problem with the Dirichlet boundary condition 
\begin{equation} \label{eq:eigenvalue_triangle}
        \begin{cases}
            \Delta f(x,y) + \lambda f(x,y) = 0 \text{ in } \Omega, \\
            f(x,y) = 0 \text{ on } \partial \Omega.
        \end{cases}
\end{equation}
By \cite[Thm. 1]{pinsky1980eigenvalues}, the eigenvalues of \eqref{eq:eigenvalue_triangle} are 
\begin{equation} \label{eq:eigenvalues}
    \lambda_{m,n} = \frac{16 \pi^2}{27}(m^2 - mn + n^2), \textrm{ with $m, n \in \Z$ satisfying} 
\end{equation}
\begin{itemize} 
\item[(A)] $m + n \equiv 0 \,\, (\text{mod } 3)$, 
\item[(B)] $m \neq 2n$, 
\item[(C)] $n \neq 2m$, 
\item[(D)] $m \neq -n$.
\end{itemize}
Although these conditions are stated in \cite{pinsky1980eigenvalues} and \cite{pinsky1985completeness}, there is no proof given that they are necessary and sufficient to guarantee that the associated $\lambda_{m,n}$ is an eigenvalue.  A formula is given for the associated eigenfunction, but it could happen that the function vanishes identically, or that it does not satisfy the boundary condition.  McCartin filled this gap by providing a beautiful pedagogical derivation of these expressions and showed how the conditions are necessary and sufficient \cite{mccartin2011laplacian}.  We have a slightly different proof that some readers may find more accessible, and since it takes a mere page, we include it for the benefit of readers.  

If $\lambda_{m,n}$ is an eigenvalue of $\Omega$, a corresponding eigenfunction is given by
\begin{equation} \label{eq:eigenfunction1}
    f_{m,n}(x,y) = \sum_{(m,n)} \pm e^{\frac{2\pi i}{3}(nx + \frac{(2m-n)y}{\sqrt{3}})}.
\end{equation}
Here, the sum goes through the six pairs in 
\begin{equation} \label{eq_six_pairs}
    (-n, m-n), (-n, -m), (n-m, -m), (n-m, n), (m,n), (m, m-n)
\end{equation}
from left to right and the sign alternates for each term. It is straightforward to verify that $(m,n)$ satisfies (A) -- (D) if and only if every pair above also satisfies (A) -- (D). Explicitly, the eigenfunction corresponding to these six pairs is 
\begin{align} \label{eq:eigenfunction2}
    \begin{split}
        f_{m,n}(x,y) &= e^{\frac{2\pi i}{3}((m-n)x - \frac{(m+n)y}{\sqrt{3}})} - e^{\frac{2\pi i}{3}(-mx + \frac{(m-2n)y}{\sqrt{3}})} + e^{\frac{2\pi i}{3}(-mx + \frac{(2n-m)y}{\sqrt{3}})} \\
        &- e^{\frac{2\pi i}{3}(nx + \frac{(n-2m)y}{\sqrt{3}})} + e^{\frac{2\pi i}{3}(nx + \frac{(2m-n)y}{\sqrt{3}})} - e^{\frac{2\pi i}{3}((m-n)x + \frac{(m+n)y}{\sqrt{3}})},
    \end{split}
\end{align}
or, equivalently,
\begin{equation}\label{eq:eigenfunction3}
    \begin{split} 
        f_{m,n}(x,y) = -2ie^{\frac{2\pi i}{3}(m-n)x} \sin\left(\frac{2\pi(m+n)y} {3\sqrt{3}}\right) 
        -2ie^{-\frac{2\pi i}{3}mx} \sin\left(\frac{2\pi(m-2n)y} {3\sqrt{3}}\right) \\
        -2ie^{\frac{2\pi i}{3}nx} \sin\left(\frac{2\pi(n-2m)y} {3\sqrt{3}}\right). 
    \end{split}
\end{equation}

We will now prove that $f_{m,n}$ is an eigenfunction of $\Omega$ with corresponding eigenvalue $\lambda_{m,n}$ 
if and only if (A), (B), (C), and (D) are satisfied. 
First, we note that 
\begin{equation*}
    (m-n)^2 + \frac{(m+n)^2}{3} = m^2 + \frac{(m-2n)^2}{3} = n^2 + \frac{(n-2m)^2}{3} = \frac{4m^2 - 4mn + 4n^2}{3},
\end{equation*}
from which it immediately follows that
\begin{equation*}
    \Delta f_{m,n} = -\frac{16\pi^2}{27}(m^2-mn+n^2)f_{m,n}.
\end{equation*}
Thus, $\Delta f_{m,n} + \lambda_{m,n}f_{m,n} = 0$ holds. Next, we examine when $f_{m,n}$ satisfies the boundary condition. At the boundary $y = 0$, it is clear from \eqref{eq:eigenfunction3} that $f_{m,n}$ vanishes. At $y = x\sqrt{3}$ we obtain
\begin{align*}
    f_{m,n}(x,x\sqrt{3}) &= e^{\frac{2\pi i}{3}(-2nx)} - e^{\frac{2\pi i}{3}(-2nx)} + e^{\frac{2\pi i}{3}((2n-2m)x)} \\ &- e^{\frac{2\pi i}{3}((2n-2m)x)} + e^{\frac{2\pi i}{3}(2mx)} - e^{\frac{2\pi i}{3}(2mx)} = 0.
\end{align*}
For $y = \sqrt{3}(1-x)$, we have
\begin{align*}
    f_{m,n}(x,\sqrt{3}(1-x)) 
    &= e^{\frac{2\pi i}{3}(-(m+n) + 2mx)} - e^{\frac{2\pi i}{3}((m-2n) + (2n-2m)x)} + e^{\frac{2\pi i}{3}((2n-m) - 2nx)} \\
    &- e^{\frac{2\pi i}{3}((n-2m) + 2mx)} + e^{\frac{2\pi i}{3}((2m-n) + (2n-2m)x)} - e^{\frac{2\pi i}{3}((m+n) - 2nx)},
\end{align*}
which we can write as
\begin{align*}
    f_{m,n}(x,\sqrt{3}(1-x)) &= e^{\frac{4\pi i}{3}mx}(e^{\frac{2\pi i}{3}(-m-n)} - e^{\frac{2\pi i}{3}(n-2m)}) \\
    &+ e^{\frac{4\pi i}{3}(n-m)x}(e^{\frac{2\pi i}{3}(2m-n)} - e^{\frac{2\pi i}{3}(m-2n)}) \\
    &+ e^{\frac{4\pi i}{3}(-nx)}(e^{\frac{2\pi i}{3}(2n-m)} - e^{\frac{2\pi i}{3}(m+n)}).
\end{align*}
This vanishes if and only if condition (A) holds. Indeed, if $m + n \equiv 0 \,\, (\text{mod } 3)$, then $m + n = 3k$ for some $k \in \Z$, and
\begin{align*}
    f_{m,n}(x,\sqrt{3}(1-x)) &= e^{\frac{4\pi i}{3}mx}(e^{\frac{2\pi i}{3}(-3k)} - e^{\frac{2\pi i}{3}(3k-3m)}) \\
    &+ e^{\frac{4\pi i}{3}(n-m)x}(e^{\frac{2\pi i}{3}(3m-3k)} - e^{\frac{2\pi i}{3}(3k-3n)}) \\
    &+ e^{\frac{4\pi i}{3}(-nx)}(e^{\frac{2\pi i}{3}(3n-3k)} - e^{\frac{2\pi i}{3}(3k)}) \\
    &= e^{\frac{4\pi i}{3}mx}(1-1) + e^{\frac{4\pi i}{3}(n-m)x}(1-1) + e^{\frac{4\pi i}{3}(-nx)}(1-1) = 0.
\end{align*}
If instead $m + n \equiv 1 \,\, (\text{mod } 3)$, so that $m + n = 3k + 1$, then it simplifies to
\begin{align*}
    f_{m,n}(x,\sqrt{3}(1-x)) = -i\sqrt{3}(e^{\frac{4\pi i }{3}mx} + e^{\frac{4\pi i }{3}(n-m)x} + e^{-\frac{4\pi i }{3}nx}),
\end{align*}
which is not identically zero. For example, at $x = 3/4$ this equals $-i\sqrt{3}((-1)^m + (-1)^{n-m} + (-1)^n) \neq 0$. Similarly, if $m + n \equiv 2 \,\, (\text{mod } 3)$, then $m + n = 3k - 1$ and
\begin{align*}
    f_{m,n}(x,\sqrt{3}(1-x)) = i\sqrt{3}(e^{\frac{4\pi i }{3}mx} + e^{\frac{4\pi i }{3}(n-m)x} + e^{-\frac{4\pi i }{3}nx}) \not\equiv 0.
\end{align*}

Next, we show that $f_{m,n} \equiv 0$ when either (B), (C), or (D) isn't satisfied. 
For this it suffices to check that $f_{m,2m}$, $f_{2n,n}$, and $f_{-n,n}$ are all identically zero.  To show this, we compute that the six pairs in each case respectively are 
\[ (-2m, -m), (-2m, -m), (m, -m), (m, 2m), (m, 2m), (m, -m);\]
\[ (-n, n), (-n, -2n), (-n, -2n), (-n, n) , (2n, n), (2n, n);\]
\[ (-n, -2n), (-n, n), (2n, n), (2n, n), (-n, n), (-n, -2n).\]
Due to the alternating signs in the definitions of $f_{m,2m}$, $f_{2n,n}$, and $f_{-n,n}$ it follows that they each vanish identically.  

Finally, we compute that 
\[ f_{m,n} (0, y) = -2 i \left[ \sin\left( \frac{2\pi y(m+n)}{3\sqrt 3}\right) + \sin \left( \frac{2\pi y(m-2n)}{3 \sqrt 3} \right) + \sin \left( \frac{2\pi y(n-2m)}{3 \sqrt 3} \right) \right].\]
Sines with different frequencies are linearly independent.  Consequently as long as $|m+n|$, $|m-2n|$ and $|n-2m|$ are not all equal, then $f_{m,n}(0,y)$ is not the zero function.  Since $f_{m,n}$ is a real analytic function on $\R^2$ it then follows that $f_{m,n}$ is also not the zero function.  We therefore compute 
\[ m^2 + n^2 + 2mn = m^2 + 4n^2 - 4mn = n^2 + 4m^2 - 4mn\]
\[ \iff n^2 + 2mn = 4n^2 - 4mn \iff 6mn = 3n^2 \iff n=0 \textrm{ or } 2m = n.\]
Since $2m\neq n$ by (C) this would require $n=0$ but then $m^2=4m^2 \implies m=0$ which violates (D). We have therewith shown that each orbit in \eqref{eq_six_pairs} satisfying (A)--(D) gives rise to a (nontrivial) Laplace eigenfunction that satisfies the Dirichlet boundary condition.  The fact that each orbit gives rise to a \em distinct \em eigenfunction, and that the collection of all of these functions constitutes an orthogonal base for $L^2$ on the triangle follows from \cite{pinsky1985completeness}.  

\subsection{The spectral zeta function and zeta-regularized determinant of an equilateral triangle} \label{sec:spectral_zeta_function_equilateral_triangle}
For the equilateral triangle with side length $\ell$, the spectral zeta function (corresponding to the Dirichlet boundary condition) is 
\begin{equation*}
    \zetaeq(s) = \sum_{\lambda_{m,n}} \frac{1}{\lambda_{m,n}^s} = \frac 1 6 \left(\frac{27\ell^2}{16\pi^2}\right)^s\sum_{\lambda_{m,n}} \frac{1}{(m^2-mn+n^2)^s}.
\end{equation*}
We sum according to \eqref{eq:eigenvalues}.  Each eigenvalue occurs six times its actual multiplicity, hence we divide by $6$ to correctly account for multiplicities.  Our first result is an expression for this spectral zeta function that we have not encountered in the literature. 

\begin{proposition} \label{prop:zeta_eqtri} 
The spectral zeta function for an equilateral triangle with side lengths equal to $\ell$ and the Dirichlet boundary condition is equivalently
\begin{align*}   \zetaeq(s) 
    &= \frac 1 6 \left(\frac{3\ell}{4\pi}\right)^{2s} \bigg[ -4\zeta_R(2s) + \frac{2^{2s}\sqrt{\pi}\zeta_R(2s-1)\Gamma(s-1/2)}{\Gamma(s)3^{s-1/2}} \\ 
    &+ \frac{4\pi^s2^{s-1/2}}{\Gamma(s)3^{s/2-1/4}}\sum_{n=1}^\infty n^{s-1/2}\sum_{d|n}d^{1-2s}(-1)^n \int_0^\infty x^{s-3/2}e^{-\pi n\sqrt{3}(x+x^{-1})/2} dx \bigg], \\
    \zetaeq(s) &= \frac{1}{6}\left(\frac{3\ell}{4}\right)^{2s}\left[G_\nabla(s) - \frac{6}{\pi^{2s}}\zeta_R(2s)\right].
\end{align*}
Here,
\begin{equation}
    G_\nabla(s) = \sum_{m \in \Z}\sum_{k \in \Z}\frac{1}{\pi^{2s}|m+kz|^{2s}}, \,\, z = \frac{-3 + i\sqrt{3}}{2}. \label{eq:Gtri}
\end{equation}

\end{proposition}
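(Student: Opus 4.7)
The plan is to proceed analogously to the proof of Proposition \ref{prop:zeta_det_rect} for the rectangle: first express the constrained lattice sum defining $\zetaeq(s)$ as the Epstein zeta function $G_\nabla(s)$ of the hexagonal lattice (plus explicit correction terms coming from the lines excluded by (B)--(D)), and then apply the Chowla--Selberg formula of \cite{selberg1967epstein} to extract the first expression.

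First, since each eigenvalue is produced by an orbit of six pairs, one has $\zetaeq(s) = \tfrac{1}{6}\bigl(\tfrac{27\ell^2}{16\pi^2}\bigr)^s \sum (m^2-mn+n^2)^{-s}$ with the sum over all pairs satisfying (A)--(D). I would check that the conditions (B), (C), (D) fail precisely on the three lines $L_1=\{m=2n\}$, $L_2=\{n=2m\}$, $L_3=\{m=-n\}$; that each $L_i$ automatically satisfies (A); that the $L_i$ are pairwise disjoint away from the origin; and that on each line the form $m^2-mn+n^2$ reduces to $3n^2$, $3m^2$, $3n^2$ respectively. Each excluded line therefore contributes $\sum_{n\ne 0}(3n^2)^{-s} = \tfrac{2\zeta_R(2s)}{3^s}$, so the sum over (A)--(D) equals the sum over (A) with the origin removed, minus $\tfrac{6\zeta_R(2s)}{3^s}$.

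Next I would introduce the linear substitution $m=a-b$, $n=-a-2b$, which gives a bijection $\Z^2\setminus\{(0,0)\}\to\{(m,n)\neq(0,0):m+n\equiv 0\pmod 3\}$ and, by a direct calculation, satisfies $m^2-mn+n^2=3(a^2+ab+b^2)$. Since $a^2+ab+b^2=|a+b\omega|^2$ with $\omega=e^{i\pi/3}$, and since the lattices $\Z+\Z z$ (for $z=(-3+i\sqrt 3)/2$) and $\Z+\Z\omega$ coincide (because $\omega=z+2$), we obtain
\[ \sum_{\substack{(m,n)\neq 0 \\ m+n\equiv 0\,(3)}} (m^2-mn+n^2)^{-s} \;=\; \frac{\pi^{2s}}{3^s}\,G_\nabla(s). \]
Multiplying by $\tfrac{1}{6}\bigl(\tfrac{27\ell^2}{16\pi^2}\bigr)^s$, simplifying $\tfrac{27}{16\cdot 3}=\tfrac{9}{16}$, and absorbing $\pi^{2s}$ into the prefactor gives the second expression of the proposition.

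For the first expression I would apply the Chowla--Selberg formula of \cite[p.~87]{selberg1967epstein} directly to $\pi^{2s}G_\nabla(s)=\sum_{(m,k)\ne 0}|m+kz|^{-2s}$ with $z=x+iy$, $x=-3/2$, $y=\sqrt 3/2$. The real part yields $\cos(2\pi nx)=(-1)^n$, and converting the resulting modified Bessel functions through the integral representation $K_{s-1/2}(w)=\tfrac{1}{2}\int_0^\infty t^{s-3/2}e^{-w(t+t^{-1})/2}\,dt$ at $w=2\pi ny=\pi n\sqrt 3$ produces exactly the integrals appearing in the statement. Collecting powers of $y=\sqrt 3/2$ supplies the prefactors $2^{2s}/3^{s-1/2}$ on the Gamma term and $2^{s-1/2}/3^{s/2-1/4}$ on the Bessel sum, and combining the $2\zeta_R(2s)$ produced by Chowla--Selberg with the $-6\zeta_R(2s)$ correction from step one yields the $-4\zeta_R(2s)$ of the statement.

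The main obstacle is the bookkeeping in the first two steps: confirming that (B)--(D) fail only on the three coordinate lines $L_i$ (mutually disjoint off the origin), computing the change of variables $(m,n)\leftrightarrow(a,b)$ cleanly, and verifying the identification $\Z+\Z z=\Z+\Z\omega$. Once the sum has been rewritten as a multiple of $G_\nabla(s)$, the remainder is a direct invocation of Chowla--Selberg together with elementary algebraic simplification, parallel to the rectangle case.
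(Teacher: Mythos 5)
Your proposal is correct and follows essentially the same route as the paper: subtract the three excluded lines (each contributing $\tfrac{2}{3^s}\zeta_R(2s)$), reparametrize the sublattice $\{m+n\equiv 0 \pmod 3\}$ to exhibit the form as $3$ times a hexagonal form equal to $|m+kz|^2$, and invoke the Chowla--Selberg formula of \cite{selberg1967epstein} for the first expression. The only (cosmetic) difference is your intermediate substitution $m=a-b$, $n=-a-2b$ passing through $a^2+ab+b^2=|a+b\omega|^2$ and the lattice identity $\omega=z+2$, where the paper writes $n=3k-m$ and lands directly on $m^2-3km+3k^2=|m+kz|^2$.
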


\begin{proof}
To calculate the sum defining the spectral zeta function we begin by restricting the sum to nonzero pairs  $(m,n) = (m,3k-m)$ satisfying (A):
\begin{align*}
    &\sum_{m \in \Z}\sum_{k \in \Z} \frac{1}{(m^2 - m(3k-m) + (3k-m)^2)^s} = \sum_{m \in \Z}\sum_{k \in \Z} \frac{1}{(3m^2 - 9km + 9k^2)^s} \\ 
    &= \frac{1}{3^s} \sum_{m \in \Z}\sum_{k \in \Z} \frac{1}{(m^2 - 3km + 3k^2)^s}. %\\
\end{align*}
The notation $m \in \Z$, $k \in \Z$ is as in \eqref{eq:abuse}. By \cite[p. 87]{selberg1967epstein}, 
we have
\begin{align*}
    \sum_{m \in \Z}\sum_{k \in \Z} \frac{1}{(m^2-3km+3k^2)^s} = 2\zeta_R(2s) + \frac{2^{2s}\sqrt{\pi}\zeta_R(2s-1)\Gamma(s-1/2)}{\Gamma(s)3^{s-1/2}} + Q(s), \\
    Q(s) = \frac{4\pi^s2^{s-1/2}}{\Gamma(s)3^{s/2-1/4}}\sum_{n=1}^\infty n^{s-1/2}\sum_{d|n} d^{1-2s}(-1)^n \int_0^\infty x^{s-3/2}e^{-\pi n\sqrt{3}(x+x^{-1})/2} dx. 
\end{align*}

When we sum over the pairs $(m,n)$ with $m = 2n$, we get
\begin{align*}
    \sum_{n \in \Z, n \neq 0} \frac{1}{((2n)^2 - 2n\cdot n + n^2)^s} = \sum_{n \in \Z, n \neq 0} \frac{1}{(3n^2)^s} = \frac{1}{3^s}\sum_{n \in \Z, n \neq 0} \frac{1}{n^{2s}} = \frac{2}{3^s}\sum_{n = 1}^\infty \frac{1}{n^{2s}} = \frac{2}{3^s} \zeta_R(2s).
\end{align*}
Of course we get the same result when we sum over the pairs $(m,n)$ with $n = 2m$ and $m = -n$. Thus, recalling the factor of $\frac 1 6$, we have 
\begin{align*}
    \zetaeq(s) &= \frac 1 6 \left(\frac{27\ell^2}{16\pi^2}\right)^s \bigg[ \frac{1}{3^s} \sum_{m \in \Z}\sum_{k \in \Z} \frac{1}{(m^2 - 3km + 3k^2)^s} - 3\cdot \frac{2}{3^s}\zeta_R(2s) \bigg]  \\
    &=\frac 1 6 \left(\frac{3\ell}{4\pi}\right)^{2s} \bigg[ -4\zeta_R(2s) + \frac{2^{2s}\sqrt{\pi}\zeta_R(2s-1)\Gamma(s-1/2)}{\Gamma(s)3^{s-1/2}} \\ 
    &+ \frac{4\pi^s2^{s-1/2}}{\Gamma(s)3^{s/2-1/4}}\sum_{n=1}^\infty n^{s-1/2}\sum_{d|n}d^{1-2s}(-1)^n \int_0^\infty x^{s-3/2}e^{-\pi n\sqrt{3}(x+x^{-1})/2} dx \bigg].
\end{align*}
To obtain the second expression for $\zetaeq(s)$, simply note that 
\begin{equation*}
    \sum_{m \in \Z}\sum_{k \in \Z} \frac{1}{(m^2 - 3km + 3k^2)^s} = \sum_{m \in \Z}\sum_{k \in \Z} \frac{1}{|m+kz|^{2s}} = G_\nabla(s), \,\, z = \frac{-3 + i\sqrt{3}}{2},
\end{equation*}
and therewith 
\begin{equation*}
    \zetaeq(s) = \frac{1}{6}\left(\frac{3\ell}{4}\right)^{2s}\left[G_\nabla(s) - \frac{6}{\pi^{2s}}\zeta_R(2s)\right].
\end{equation*}
\end{proof} 

These expressions allow one to meromorphically extend the spectral zeta function to the complex plan and evaluate it at points of interest.  In particular, as a corollary we obtain two expressions for $\zetaeq'(0)$ which can be used to calculate the zeta-regularized determinant, $e^{-\zetaeq'(0)}$. 

\begin{corollary} \label{cor:zetadet_eqtri} 
The derivative of the spectral zeta function of an equilateral triangle with sides of length $\ell$ and the Dirichlet boundary condition is equivalently given by the expressions 
\begin{align*}
    \zetaeq'(0) &= \frac 2 3 \log\left(\frac{3 \ell }{2}\right) + \frac{\pi\sqrt{3}}{36} + \frac 2 3 \sum_{n=1}^\infty \frac{(-1)^n}{n e^{\pi n \sqrt{3}}}\sum_{d|n} d, \\ 
    \zetaeq'(0) &= \frac 2 3\log\left(\frac{3 \ell}{2|\eta(z)|}\right), \quad z = \frac{-3 + i\sqrt{3}}{2}. 
\end{align*}
Above, $\eta$ is the Dedekind eta function. 
\end{corollary}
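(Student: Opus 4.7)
The plan is to mirror the proof of Corollary \ref{cor:zeta_det_rect} step by step, differentiating each of the two expressions for $\zetaeq(s)$ given in Proposition \ref{prop:zeta_eqtri} and then evaluating at $s = 0$. For the first expression, I would differentiate term by term, noting that the final sum carries a factor $1/\Gamma(s)$; by invoking (the equilateral analogue of) Lemma \ref{lemma_vanishes} to justify differentiating termwise and under the integral sign, the derivative of $\bigl(\frac{ab}{\pi}\bigr)^s\sqrt{a/b}\sum \cdots$ at $s=0$ vanishes against $1/\Gamma(s)$, leaving only the $-\Gamma'(0)/\Gamma(0)^2$ contribution, which equals $+1$ by \eqref{eq:gamma_function_simple_pole}. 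The remaining ingredients are the special values $\zeta_R(0) = -\frac12$, $\zeta_R'(0) = -\frac{\log(2\pi)}{2}$, $\zeta_R(-1) = -\frac{1}{12}$, $\Gamma(-1/2) = -2\sqrt\pi$, together with the integral evaluation
\[
\int_0^\infty x^{-3/2} e^{-\pi n \sqrt{3}(x + x^{-1})/2}\, dx = \sqrt{\tfrac{2}{n\sqrt{3}}}\, e^{-\pi n \sqrt{3}},
\]
which follows exactly as in \eqref{eq:miracle_integral} by the substitution $x = e^t$ and the identity $K_{1/2}(z) = \sqrt{\pi/(2z)}\,e^{-z}$, now with $A = \pi n\sqrt{3}/2$ in place of $\pi a n/b$. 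Combining these, the prefactors $\bigl(\frac{3\ell}{4\pi}\bigr)^{2s}$, $2^{2s}$, $3^{s-1/2}$, etc., evaluated at $s=0$ produce the claimed $\frac{2}{3}\log(3\ell/2)$, $\frac{\pi\sqrt{3}}{36}$, and alternating divisor sum.

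For the second expression, I would differentiate $\zetaeq(s) = \frac{1}{6}(3\ell/4)^{2s}[G_\nabla(s) - 6\pi^{-2s}\zeta_R(2s)]$ and substitute $s = 0$. Here the two key inputs I need are $G_\nabla(0)$ and $G_\nabla'(0)$ for the Epstein zeta function associated to $z = (-3 + i\sqrt{3})/2$; these are the Kronecker/Chowla-Selberg limit formula values of the lattice Epstein zeta, which I would cite from \cite{osgood1988extremals, aldana2018polyakov} just as in the rectangle case. Using $G_\nabla(0) = -1$ and the corresponding expression of $G_\nabla'(0)$ in terms of $|\eta(z)|$, together with $\zeta_R(0) = -\frac12$ and $\zeta_R'(0) = -\frac{\log(2\pi)}{2}$, the constants involving $\pi$ cancel and one is left with $\frac{2}{3}\log(3\ell/(2|\eta(z)|))$.

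Finally, to confirm consistency between the two formulas I would expand $\log|\eta(z)|$ for $z = (-3+i\sqrt{3})/2$. Since $q = e^{\pi i z} = i e^{-\pi\sqrt{3}/2}$ gives $q^{2n} = (-1)^n e^{-\pi n \sqrt{3}}$, one has
\[
\log|\eta(z)| = -\frac{\pi\sqrt{3}}{24} + \sum_{n=1}^\infty \log\bigl|1 - (-1)^n e^{-\pi n\sqrt{3}}\bigr|,
\]
and then the Taylor-expansion identity $-\sum_n \log(1-q^n) = \sum_n \frac{q^n}{n}\sum_{d|n} d$ for $|q|<1$ (proved at the end of Corollary \ref{cor:zeta_det_rect}, applied with $q = -e^{-\pi\sqrt{3}}$) converts the $\log$ sum into the alternating divisor sum, matching the first expression.

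The main obstacle is the bookkeeping of all the $\sqrt{3}$, $2^{s}$, $3^{s}$ factors under the derivative at $s = 0$ (a purely computational task, but one where sign errors are easy to make), together with ensuring that the Epstein zeta values $G_\nabla(0), G_\nabla'(0)$ for the lattice associated to $z = (-3+i\sqrt{3})/2$ are cited in the exact normalization used here; once these are settled, the argument is a direct analogue of the rectangle case.
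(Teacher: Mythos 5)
Your proposal follows essentially the same route as the paper's proof: differentiate both expressions from Proposition \ref{prop:zeta_eqtri}, use Lemma \ref{lemma_vanishes} (which applies directly here, since $\pi n\sqrt{3}/2$ is of the form $\pi a n/b$) together with \eqref{eq:gamma_function_simple_pole}, \eqref{eq:miracle_integral}, and \eqref{eq:values_Riemann_zeta_Gamma} for the first formula, the Osgood--Phillips--Sarnak values $G_\nabla(0)=-1$ and $G_\nabla'(0)$ for the second, and the divisor-sum expansion of $\log|\eta(z)|$ with $q^{2n}=(-1)^n e^{-\pi n\sqrt{3}}$ to reconcile the two. Your integral evaluation $\sqrt{2/(n\sqrt{3})}\,e^{-\pi n\sqrt{3}}$ and the sign of $-\Gamma'(0)/\Gamma(0)^2=+1$ are both correct, so the plan is sound and matches the paper.
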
 

\begin{proof} 
We have
\begin{align*}
    \zetaeq'(s) &= \frac{1}{3}\log\left(\frac{3 \ell}{4\pi}\right)\left(\frac{3 \ell}{4\pi}\right)^{2s} \bigg[ -4\zeta_R(2s) + \frac{2^{2s}\sqrt{\pi}\zeta_R(2s-1)\Gamma(s-1/2)}{\Gamma(s)3^{2s-1/2}} \\ 
    &+ \frac{4\pi^s2^{s-1/2}}{\Gamma(s)3^{s/2-1/4}}\sum_{n=1}^\infty n^{s-1/2}\sum_{d|n}d^{1-2s}(-1)^n \int_0^\infty x^{s-3/2}e^{-\pi n\sqrt{3}(x+x^{-1})/2} dx \bigg] \\
    &- \frac{1}{6}\left(\frac{3 \ell}{4\pi}\right)^{2s} 8\zeta_R'(2s) +  \frac{\sqrt{3\pi}}{\Gamma(s)^2}\left(\frac{3 \ell ^2}{4\pi^2}\right)^s \biggl[\log\left(\frac{4}{3}\right)\zeta_R(2s-1)\Gamma(s)\Gamma(s-1/2) \\ &+ 2\zeta_R'(2s-1)\Gamma(s)\Gamma(s-1/2) - \zeta_R(2s-1)\Gamma'(s)\Gamma(s-1/2) + \zeta_R(2s-1)\Gamma(s)\Gamma'(s-1/2)\biggr] \\
    &+ \frac{1}{6}\frac{d}{ds} \bigg[\frac{4\pi^s2^{s-1/2}}{\Gamma(s)3^{s/2-1/4}}\sum_{n=1}^\infty n^{s-1/2}\sum_{d|n}d^{1-2s}(-1)^n \int_0^\infty x^{s-3/2}e^{-\pi n\sqrt{3}(x+x^{-1})/2} dx\bigg],
\end{align*}
where
\begin{align*}
    &\frac{d}{ds} \bigg[\frac{4\pi^s2^{s-1/2}}{\Gamma(s)3^{s/2-1/4}}\sum_{n=1}^\infty n^{s-1/2}\sum_{d|n}d^{1-2s}(-1)^n \int_0^\infty x^{s-3/2}e^{-\pi n\sqrt{3}(x+x^{-1})/2} dx\bigg] \\
    = &-\frac{\Gamma'(s)}{\Gamma(s)^2}\frac{4\pi^s2^{s-1/2}}{3^{s/2-1/4}}\sum_{n=1}^\infty n^{s-1/2}\sum_{d|n}d^{1-2s}(-1)^n \int_0^\infty x^{s-3/2}e^{-\pi n\sqrt{3}(x+x^{-1})/2} dx \\
    &+ \frac{1}{\Gamma(s)}\frac{d}{ds} \bigg[\frac{4\pi^s2^{s-1/2}}{3^{s/2-1/4}}\sum_{n=1}^\infty n^{s-1/2}\sum_{d|n}d^{1-2s}(-1)^n \int_0^\infty x^{s-3/2}e^{-\pi n\sqrt{3}(x+x^{-1})/2} dx\bigg].
\end{align*}

By Lemma \ref{lemma_vanishes} which is stated and proved in the appendix, the last term vanishes as $s \to 0$, and we thus obtain 
\begin{align*}
    \zetaeq'(0) &= \frac{1}{3}\log\left(\frac{3 \ell}{4\pi}\right)(-4\zeta_R(0)) - \frac{4}{3}\zeta_R'(0) - \frac{\sqrt{3\pi}\zeta_R(-1)\Gamma(-1/2)\Gamma'(0)}{6\Gamma(0)^2} \\
    &- \frac{2\sqrt[4]{3}}{3\sqrt{2}}\frac{\Gamma'(0)}{\Gamma(0)^2}\sum_{n=1}^\infty n^{-1/2}\sum_{d|n}d(-1)^n \int_0^\infty x^{-3/2}e^{-\pi n\sqrt{3}(x+x^{-1})/2} dx. 
\end{align*}
By \eqref{eq:gamma_function_simple_pole}, \eqref{eq:miracle_integral}, and \eqref{eq:values_Riemann_zeta_Gamma}, this simplifies to
\begin{equation*}
    \zetaeq'(0) = \frac 2 3 \log\left(\frac{3\ell}{2}\right) + \frac{\pi\sqrt{3}}{36} + \frac 2 3 \sum_{n=1}^\infty \frac{(-1)^n}{n e^{\pi n \sqrt{3}}}\sum_{d|n} d.
\end{equation*}
To obtain the second expression for $\zetaeq'(0)$, we differentiate the second expression for $\zetaeq(s)$. This gives 
\begin{align*}
    \zetaeq'(s) &= \frac{1}{3}\log\left(\frac{3\ell}{4}\right)\left(\frac{3\ell}{4}\right)^{2s}\left[G_\nabla(s) - \frac{6}{\pi^{2s}}\zeta_R(2s)\right] \\
    &+ \frac{1}{6}\left(\frac{3\ell}{4}\right)^{2s}\left[G_\nabla'(s) + \frac{12\log(\pi)}{\pi^{2s}}\zeta_R(2s) - \frac{12}{\pi^{2s}}\zeta_R'(2s)\right],
\end{align*}
which at $s = 0$ becomes
\begin{equation*}
    \zetaeq'(0) = \frac{1}{3}\log\left(\frac{3\ell}{4}\right)(G_\nabla(0) + 3) + \frac{1}{6}G_\nabla'(0) + \log(2).
\end{equation*}
We have by \cite[p. 204-205]{osgood1988extremals} (see also \cite[p. 1830-1831]{aldana2018polyakov}),
\begin{equation*}
    G_\nabla(0) = -1, \,\, G_\nabla'(0) = -\frac{1}{12}\log\left((2\pi)^{24}\frac{(\eta(z)\bar{\eta}(z))^{24}}{\pi^{24}}\right).
\end{equation*}
This yields
\begin{equation*}
  \zetaeq'(0) = \frac{2}{3}\log\left(\frac{3\ell}{2|\eta(z)|}\right).
\end{equation*}
If we write out $|\eta(z)|$ explicitly, we get
\begin{align*}
    |\eta(z)| &= e^{-\frac{\pi\sqrt{3}}{24}} \prod_{n=1}^\infty (1+(-1)^{n+1}e^{-\pi n \sqrt{3}}), \\
    \log(|\eta(z)|) &= -\frac{\pi\sqrt{3}}{24} + \sum_{n=1}^\infty \log(1+(-1)^{n+1}e^{-\pi n \sqrt{3}}),
\end{align*}
hence
\begin{equation*}
 \zetaeq'(0) = \frac{2}{3}\log\left(\frac{3\ell}{2}\right) + \frac{\pi\sqrt{3}}{36} - \frac{2}{3}\sum_{n=1}^\infty \log(1+(-1)^{n+1}e^{-\pi n \sqrt{3}}).
\end{equation*}
By doing a Taylor expansion of the logarithm, we see that this can be written as
\begin{equation*}
  \zetaeq'(0) = \frac{2}{3}\log\left(\frac{3\ell}{2}\right) + \frac{\pi\sqrt{3}}{36} + \frac{2}{3}\sum_{n=1}^\infty \frac{(-1)^n}{ne^{\pi n \sqrt{3}}}\sum_{d|n} d.
\end{equation*}
As a consequence, we have explicitly verified that our two expressions for $\zetaeq'(0)$ agree. 
\end{proof} 

\subsection{The heat trace of equilateral triangles and an alternative expression for the eigenvalues} \label{ss:eqtri_heat}
There is another common expression for the eigenvalues of an equilateral triangle in the literature (see e.g. \cite{mccartin2003eigenstructure}), namely
\begin{equation*}
    \lambda_{m,n} = \frac{4\pi^2}{27r^2}(m^2+mn+n^2), \,\, m,n \geq 1.
\end{equation*}
Here, $r$ is the radius of the inscribed circle of the triangle. If the triangle has side lengths each equal to $\ell$, then $r = \ell/\sqrt{12}$. Therefore this expression for the eigenvalues in case $\ell = 1$ is simply 
\begin{equation}
    \lambda_{m,n} = \frac{16\pi^2}{9}(m^2+mn+n^2), \,\, m,n \geq 1.
\label{eq:eqtri_ev_norm} \end{equation}
This is different from the expression \eqref{eq:eigenvalues} given by Pinsky \cite{pinsky1980eigenvalues, pinsky1985completeness} and Lam\'e \cite{lame1833memoire, lame1852leccons, lame1861leccons}.  It may appear more simple for computations because it no longer involves the conditions (A)--(D) on the integers, and their range is $\N$ rather than $\Z$.  At the same time, the connection to eigenfunctions is obfuscated, as is the multiplicity of the eigenvalues.  By \cite{pinsky1980eigenvalues, pinsky1985completeness}, expressing the eigenvalues as \eqref{eq:eigenvalues}, we know that there are six pairs that correspond to one linearly independent eigenfunction, given by the six pairs in \eqref{eq_six_pairs}.  Each distinct orbit gives rise to a distinct, linearly independent eigenfunction.  Hence to calculate spectral invariants like the spectral zeta function or the heat trace, it suffices to sum over all integers $(m,n)$ satisfying (A)--(D) and then divide by six.  It is not at all clear how to account for multiplicities using the expression \eqref{eq:eqtri_ev_norm}.  Here we use the heat trace to show how to account for the multiplicities correctly if one wishes to use \eqref{eq:eqtri_ev_norm} to compute eigenvalues and spectral invariants of equilateral triangles.   

\begin{proposition} \label{prop:2heat_traces}
The heat trace for the equilateral triangle with side length $\ell$ and the Dirichlet boundary condition is equivalently given by the expressions 
\begin{align}
    \heateqd(t) &=  \frac{\Theta_3(q^3)\Theta_3(q^9) + \Theta_2(q^3)\Theta_2(q^9) - 3\Theta_3(q^3) + 2}{6}, \nn \\
    \Theta_2(q) &= \sum_{n \in \Z} q^{(n+1/2)^2}, \quad 
    \Theta_3(q) = \sum_{n \in \Z} q^{n^2}, \quad q= e^{-\frac{16 \pi^2}{27 \ell ^2}t}, \nn \\
    \heateqd(t) &= \sum_{m = 1}^\infty \sum_{n = 1}^\infty e^{-\frac{16\pi^2}{9 \ell^2}(m^2+mn+n^2)t}. \label{eq:verhoeven_heat}
\end{align}

As a consequence, the eigenvalues are the values
\beq \frac{16 \pi^2}{9 \ell^2}(m^2+mn+n^2), \quad m, n \geq 1. \label{eq:ev_mn_geq1} \eeq 
For each pair $(m,n)$ with $m,n \geq 1$ there is exactly one orbit of the form \eqref{eq_six_pairs} where each of the six pairs in the orbit satisfies conditions (A), (B), (C), and (D). 
\end{proposition}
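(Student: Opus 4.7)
The plan is to begin from Pinsky's parametrization of the eigenvalues. With $q = e^{-16\pi^2 t/(27\ell^2)}$, the heat trace is
\[
\heateqd(t) = \tfrac{1}{6}\sum_{(m,n) \in \Z^2 \text{ satisfying } (A)-(D)} q^{m^2-mn+n^2},
\]
where the factor $\tfrac{1}{6}$ accounts for each orbit in \eqref{eq_six_pairs} contributing a single eigenfunction. First I would isolate condition (A) and apply inclusion-exclusion over the failures of (B), (C), (D). Each single failure ($m = 2n$, $n = 2m$, or $m = -n$) forces $m^2 - mn + n^2 = 3k^2$ for some $k \in \Z$, producing $\Theta_3(q^3)$, and any pair of failures among (B), (C), (D) collapses to $(m,n) = (0,0)$. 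Inclusion-exclusion therefore deducts $3\Theta_3(q^3) - 2$ from the (A)-only sum $S_A$.

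To evaluate $S_A$, I would identify the index-$3$ sublattice $L = \{(m, n) \in \Z^2 : m + n \equiv 0 \pmod{3}\}$ carrying the restriction of $m^2 - mn + n^2$. The linear map $\Phi(m, n) = (m + 2n,\, 2m + n)$ bijects $\Z^2$ onto $L$, and a direct expansion verifies
\[
(m+2n)^2 - (m+2n)(2m+n) + (2m+n)^2 = 3(m^2 + mn + n^2).
\]
Hence $S_A = \sum_{(m,n) \in \Z^2} p^{m^2 + mn + n^2}$ with $p = q^3$. Decomposing this hexagonal lattice sum by the parity of $m + n$ — setting $u = (m+n)/2$, $v = (m-n)/2$ in the even case so that $m^2 + mn + n^2 = 3u^2 + v^2$, and using half-integers in the odd case — yields $S_A = \Theta_3(p)\Theta_3(p^3) + \Theta_2(p)\Theta_2(p^3) = \Theta_3(q^3)\Theta_3(q^9) + \Theta_2(q^3)\Theta_2(q^9)$, completing the first formula.

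To obtain the second formula, I would exploit the order-$6$ rotational symmetry $R(m,n) = (-n, m+n)$ of $f(m,n) = m^2 + mn + n^2$. The set $\{(m,n) : m \geq 1,\, n \geq 0\}$ is a fundamental domain for $R$ on $\Z^2 \setminus \{(0,0)\}$, as can be verified by tracing a generic orbit through the six $\pi/3$-sectors. Isolating the boundary piece $n = 0$, whose contribution is $\sum_{m \geq 1} p^{m^2} = \tfrac{1}{2}(\Theta_3(p) - 1)$, leads to
\[
\sum_{(m,n) \in \Z^2} p^{f(m,n)} = 1 + 6\sum_{m,n \geq 1} p^{f(m,n)} + 3(\Theta_3(p) - 1).
\]
Substituting into the first formula, the $3\Theta_3(p)$ and constant terms cancel exactly, leaving $\heateqd(t) = \sum_{m,n \geq 1} p^{m^2 + mn + n^2}$.

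The consequence follows immediately: each pair $(m,n)$ with $m, n \geq 1$ contributes an eigenvalue $\tfrac{16\pi^2}{9\ell^2}(m^2+mn+n^2)$, and the multiplicity of a given eigenvalue is the number of such pairs representing the same integer $m^2+mn+n^2$. The bijection with the (A)--(D) orbits is realized by $\Phi$ itself: one checks directly that $\Phi(m,n)$ satisfies (A)--(D) whenever $m,n \geq 1$, while applying the inverse $\Phi^{-1}(m',n') = ((2n' - m')/3,\, (2m' - n')/3)$ to the six elements of any orbit produces six pairs whose sign triples $(\operatorname{sgn}(a), \operatorname{sgn}(b), \operatorname{sgn}(a+b))$ realize each of the six geometrically possible patterns exactly once — the unique pair with pattern $(+, +, +)$ landing in $\{m,n \geq 1\}$. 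The main technical obstacle is this six-sector bookkeeping together with the parity/residue split used to evaluate $S_A$; the remaining steps are direct algebra.
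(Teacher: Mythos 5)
Your proposal is correct and follows essentially the same route as the paper: the factor-of-six normalization over Pinsky's orbits, inclusion--exclusion over the failures of (B)--(D) (each failure set contributing $\Theta_3(q^3)$ and the sets pairwise meeting only at the origin), evaluation of the (A)-restricted sum as $\Theta_3(q^3)\Theta_3(q^9)+\Theta_2(q^3)\Theta_2(q^9)$, and the six-fold symmetry identity relating $\sum_{m,n\geq 1}$ to the full hexagonal lattice sum. The differences are minor bookkeeping --- you parametrize the sublattice $m+n\equiv 0 \pmod{3}$ by the linear bijection $\Phi$ and split by the parity of $m+n$, where the paper substitutes $n=3k-m$ and completes the square, and your explicit orbit-to-quadrant correspondence via $\Phi$ is a small strengthening of the paper's purely generating-function argument for the final multiplicity claim.
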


\begin{proof}
For the equilateral triangle with side length $\ell$, the heat trace with the Dirichlet boundary condition is 
\begin{equation*}
    \heateqd(t) = \frac 1 6 \sum_{\lambda_{m,n}} e^{-\frac{16\pi^2}{27 \ell^2}(m^2 -mn + n^2)t}.
\end{equation*}
The sum goes through all pairs $(m,n) \in \Z^2$ satisfying (A), (B), (C), and (D). To compensate for the fact that the six pairs in \eqref{eq_six_pairs} all correspond to the same eigenvalue, we have divided by 6. 

Then, for $q = e^{-\frac{16\pi^2}{27\ell^2}t}$, we get when summing over \textit{all} integer pairs 
\begin{align} \label{eq:heat_trace_all_integers}
\begin{split}
    &\sum_{m \in \Z}\sum_{n \in \Z} q^{m^2 - mn + n^2} = \sum_{m \in \Z}\sum_{n \in \Z} q^{(m-n/2)^2} q^{3n^2/4} = \sum_{n \in \Z} q^{3n^2/4} \sum_{m \in \Z} q^{(m-n/2)^2} \\
    = & \sum_{n \in 2\Z} q^{3n^2/4} \sum_{m \in \Z} q^{m^2} + \sum_{n \in 2\Z+1} q^{3n^2/4} \sum_{m \in \Z} q^{(m+1/2)^2} \\
    = & \Theta_3(q) \sum_{n \in 2\Z} q^{3n^2/4} + \Theta_2(q) \sum_{n \in 2\Z+1} q^{3n^2/4} \\
    = & \Theta_3(q) \sum_{k \in \Z} q^{3(2k)^2/4} + \Theta_2(q) \sum_{k \in \Z} q^{3(2k+1)^2/4} \\ 
    = & \Theta_3(q) \sum_{k \in \Z} q^{3k^2} + \Theta_2(q) \sum_{k \in \Z} q^{3(k+1/2)^2} = \Theta_3(q)\Theta_3(q^3) + \Theta_2(q)\Theta_2(q^3).
\end{split}
\end{align}
Now let us only sum over the pairs $(m,n)$ which satisfy (A). We write $n = 3k-m$ and obtain
\begin{align*}
    h_1(t) &= \sum_{k \in \Z}\sum_{m \in \Z} q^{m^2 -m(3k-m) + (3k-m)^2} = \sum_{k \in \Z} q^{9k^2/4}\sum_{m \in \Z} q^{3(m-3k/2)^2} \\ & = \Theta_3(q^3)\Theta_3(q^9) + \Theta_2(q^3)\Theta_2(q^9)
\end{align*}
by computing in a similar way. To obtain $\heateqd(t)$, we must subtract the contribution from the pairs $(m,n)$ with $m = 2n$ or $n = 2m$ or $m = -n$:
\begin{align*}
    & h_2(t) = \sum_{n \in \Z} q^{(2n)^2 - 2n\cdot n + n^2} = \sum_{n \in \Z} q^{3n^2} = \Theta_3(q^3), \\
    & h_3(t) = \sum_{m \in \Z} q^{m^2 - m\cdot 2m + (2m)^2} = \sum_{m \in \Z} q^{3m^2} = \Theta_3(q^3), \\
    & h_4(t) = \sum_{n \in \Z} q^{(-n)^2 - (-n)n + n^2} = \sum_{n \in \Z} q^{3n^2} = \Theta_3(q^3).
\end{align*}
Thus, we get
\begin{align*}
    \heateqd(t) &= \frac 1 6 \sum_{\lambda_{m,n}} e^{-\lambda_{m,n}t} = \frac{h_1(t) - h_2(t) - h_3(t) - h_4(t) + 2}{6} \\ &= \frac{\Theta_3(q^3)\Theta_3(q^9) + \Theta_2(q^3)\Theta_2(q^9) - 3\Theta_3(q^3) + 2}{6}.
\end{align*}
The plus 2 appears because when we subtract all pairs $(m,n)$ with $m = 2n$ or $n = 2m$ or $m = -n$, we subtract the contribution from $(0,0)$ three times.  

Now, \cite{verhoevencan} starts with \eqref{eq:verhoeven_heat} and computes 
\begin{align*}
    \heateqd(t) &= \sum_{m = 1}^\infty \sum_{n = 1}^\infty e^{-\frac{16\pi^2}{9\ell^2}(m^2+mn+n^2)t} \\ &= 
    \frac{1}{6}\left[ \sum_{m \in \Z}\sum_{n \in \Z}e^{-\frac{16\pi^2}{9\ell^2}(m^2+mn+n^2)t} - 3\sum_{m \in \Z}e^{-\frac{16\pi^2}{9\ell^2}m^2t} + 2 \right] \\ &= \frac{1}{6}\left[ \sum_{m \in \Z}\sum_{n \in \Z}q^{3(m^2+mn+n^2)} - 3\sum_{m \in \Z}q^{3m^2} + 2 \right].
\end{align*}
By \eqref{eq:heat_trace_all_integers} this is 
\begin{equation*}
    \frac{\Theta_3(q^3)\Theta_3(q^9) + \Theta_2(q^3)\Theta_2(q^9) - 3\Theta_3(q^3) + 2}{6}.
\end{equation*}
In particular, the two expressions for the heat trace are identical. This proves the equivalence between the two expressions for the eigenvalues of equilateral triangles. 
\end{proof} 
\begin{remark} Observe that in the expression \eqref{eq:ev_mn_geq1} it is not at all obvious that each eigenvalue occurs precisely \em six times \em but this is indeed the case.  It is also interesting to recall the observation made in \cite[Corollary 2.2.6]{verhoevencan}:  there is no bounded domain in $\R^2$ whose spectrum has the form 
\[ c(m^2 - mn + n^2), \quad m,n \in \Z.\]
In particular, the restrictions (A) -- (D) on the pairs $(m,n) \in \Z \times \Z$ are essential to the correct expression for the eigenvalues of the equilateral triangle.  To the best of our knowledge, ours is the first proof of the equivalence of these two expressions for the eigenvalues of the equilateral triangle.
\end{remark}

\subsection{Short time asymptotic expansion of the heat trace} \label{ss:eq_tri_ht_asy}
Here we obtain further terms in the short time asymptotic expansion of the heat trace.  We note that \cite{verhoevencan} obtained a related formula but rather than extracting further terms explicitly, collected all terms that vanish as $t \to 0$ into one big-O term using the asymptotic behavior of Jacobi theta functions. 

\begin{theorem}\label{th:eqtri_asy_htrace} The Dirichlet heat trace for an equilateral triangle with sides of length $\ell$ has the asymptotic expansion as $t \to 0$ 
\begin{equation*}
    \heateqd (t) = \frac{\ell^2\sqrt{3}}{16\pi t} - \frac{3\ell}{8\sqrt{\pi t}} + \frac{1}{3} - \frac{3\ell}{4\sqrt{\pi t}} e^{- \frac{9\ell^2}{16t}} + \frac{\ell^2\sqrt{3}}{8\pi t} e^{-\frac{3\ell^2}{4t}}  + \mathcal O (t^{-1} e^{-\frac{9\ell^2}{4t}} ).
\end{equation*}
The Neumann heat trace has the asymptotic expansion as $t \to 0$ 
\begin{equation*}
    \heateqn (t) = \frac{\ell^2\sqrt{3}}{16\pi t} + \frac{3\ell}{8\sqrt{\pi t}} + \frac{1}{3} + \frac{3\ell}{4\sqrt{\pi t}} e^{- \frac{9\ell^2}{16t}}  + \frac{\ell^2\sqrt{3}}{8\pi t} e^{-\frac{3\ell^2}{4t}}  + \mathcal O (t^{-1} e^{-\frac{9\ell^2}{4t}} ).
\end{equation*}
The remainders are sharp. 
\end{theorem}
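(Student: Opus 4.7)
The strategy is to apply Poisson summation to the closed-form expression for the Dirichlet heat trace furnished by Proposition \ref{prop:2heat_traces}, namely
\[\heateqd(t)=\tfrac{1}{6}\bigl[\Theta_3(q^3)\Theta_3(q^9)+\Theta_2(q^3)\Theta_2(q^9)\bigr]-\tfrac{1}{2}\Theta_3(q^3)+\tfrac{1}{3},\qquad q=e^{-16\pi^2 t/(27\ell^2)}.\]
The series $\Theta_j(q^k)$ converge very slowly as $t\to 0$, so to extract small-$t$ asymptotics I would invert the role of $t$ using the Jacobi identities $\Theta_3(e^{-a})=\sqrt{\pi/a}\,\Theta_3(e^{-\pi^2/a})$ and $\Theta_2(e^{-a})=\sqrt{\pi/a}\,\Theta_4(e^{-\pi^2/a})$, where $\Theta_4(q)=\sum_{n\in\Z}(-1)^n q^{n^2}$. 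Writing $\alpha=9\ell^2/(16t)$ and $\beta=3\ell^2/(16t)$, this converts $\Theta_3(q^3),\Theta_2(q^3)$ into $\tfrac{3\ell}{4\sqrt{\pi t}}\Theta_3(e^{-\alpha})$ and $\tfrac{3\ell}{4\sqrt{\pi t}}\Theta_4(e^{-\alpha})$, and $\Theta_3(q^9),\Theta_2(q^9)$ into $\tfrac{\ell\sqrt{3}}{4\sqrt{\pi t}}\Theta_3(e^{-\beta})$ and $\tfrac{\ell\sqrt{3}}{4\sqrt{\pi t}}\Theta_4(e^{-\beta})$.

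The central simplification is the identity
\[\Theta_3(e^{-\alpha})\Theta_3(e^{-\beta})+\Theta_4(e^{-\alpha})\Theta_4(e^{-\beta})=2\!\!\!\sum_{\substack{(m,n)\in\Z^2\\ m+n\text{ even}}}\!\!\!e^{-m^2\alpha-n^2\beta},\]
which cancels the unwanted linear-order cross terms. After substitution, the exponent becomes $3(3m^2+n^2)\ell^2/(16t)$; organizing by $3m^2+n^2$, the pair $(0,0)$ yields the area term $\tfrac{\ell^2\sqrt{3}}{16\pi t}$, the six pairs with $3m^2+n^2=4$ (namely $(0,\pm 2)$ and $(\pm 1,\pm 1)$) produce the $e^{-3\ell^2/(4t)}$ correction, and all remaining $(m,n)$ satisfy $3m^2+n^2\geq 12$, contributing to $\mathcal{O}(t^{-1}e^{-9\ell^2/(4t)})$. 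Separately, $-\tfrac{1}{2}\Theta_3(q^3)=-\tfrac{3\ell}{8\sqrt{\pi t}}\Theta_3(e^{-\alpha})$ supplies the perimeter term $-\tfrac{3\ell}{8\sqrt{\pi t}}$ and the correction $-\tfrac{3\ell}{4\sqrt{\pi t}}e^{-9\ell^2/(16t)}$ from $k=\pm 1$ in $\Theta_3(e^{-\alpha})$, with further terms absorbed into the remainder. Adding the constant $\tfrac{1}{3}$ completes the Dirichlet expansion.

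For the Neumann case, I would invoke the (standard) parametrization of the Neumann eigenvalues as $\tfrac{16\pi^2}{9\ell^2}(m^2+mn+n^2)$ for $m,n\geq 0$, which by a brief inclusion--exclusion gives $\heateqn(t)=\heateqd(t)+\Theta_3(q^3)$. Applying the Jacobi transform to the extra $\Theta_3(q^3)$ flips the signs of the perimeter term and of the $e^{-9\ell^2/(16t)}$ correction, producing the claimed Neumann expansion while leaving the area, constant, and $e^{-3\ell^2/(4t)}$ contributions unchanged.

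Sharpness of the remainder in both cases reduces to exhibiting a nonzero net contribution at the threshold exponent $9\ell^2/(4t)$: the six pairs with $3m^2+n^2=12$ (namely $(\pm 2,0)$ and $(\pm 1,\pm 3)$) from the pair sum, together with the $k=\pm 2$ contribution of $\Theta_3(e^{-\alpha})$, do not cancel. The principal technical obstacle I foresee is the bookkeeping forced by the coincidence $\alpha+\beta=4\beta$ (since $\alpha=3\beta$): the pairs $(\pm 1,\pm 1)$ at exponent $\alpha+\beta$ and $(0,\pm 2)$ at exponent $4\beta$ collapse onto the same exponent $e^{-3\ell^2/(4t)}$, and miscounting here is the easiest way to mis-aggregate the six contributing pairs.
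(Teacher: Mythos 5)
Your proposal is correct and follows essentially the same route as the paper: apply Poisson summation (equivalently the Jacobi theta transformations) to the product expression from Proposition \ref{prop:2heat_traces}, combine the two products via $\Theta_3\Theta_3+\Theta_4\Theta_4=2\sum_{m+n\text{ even}}$, order the resulting lattice points by $3m^2+n^2$, and handle the Neumann case through $\heateqn(t)=\Theta_3(q^3)+\heateqd(t)$; your sharpness argument at $3m^2+n^2=12$ is also the right one. One remark: your (correct) count of six pairs with $3m^2+n^2=4$ yields the coefficient $6\cdot\frac{\ell^2\sqrt{3}}{16\pi t}=\frac{3\ell^2\sqrt{3}}{8\pi t}$ for the $e^{-3\ell^2/(4t)}$ term, which is three times the coefficient displayed in the theorem; the paper's own final formula in its proof supports your count (the single sum contributes $\frac{\ell^2\sqrt{3}}{8\pi t}e^{-3\ell^2/(4t)}$ and the odd--odd double sum contributes an additional $\frac{\ell^2\sqrt{3}}{4\pi t}e^{-3\ell^2/(4t)}$ at $m=n=1$), so the discrepancy is a typo in the theorem statement rather than an error in your derivation.
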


\begin{proof} 
By Proposition \ref{prop:2heat_traces} the Dirichlet heat trace is 
\begin{align*}
    \heateqd(t) &= \frac{\Theta_3(q^3)\Theta_3(q^9) + \Theta_2(q^3)\Theta_2(q^9) - 3\Theta_3(q^3) + 2}{6} 
    = \frac{1}{6}\bigg[\sum_{m \in \Z}e^{-\frac{16\pi^2}{9\ell^2}m^2t}\sum_{n \in \Z}e^{-\frac{16\pi^2}{3\ell^2}n^2t} \\ &+ \sum_{m \in \Z}e^{-\frac{16\pi^2}{9\ell^2}(m+\frac{1}{2})^2t}\sum_{n \in \Z}e^{-\frac{16\pi^2}{3\ell^2}(n+\frac{1}{2})^2t} - 3\sum_{m \in \Z}e^{-\frac{16\pi^2}{9\ell^2}m^2t} + 2\bigg].
\end{align*}
By Poisson's summation formula,
\begin{align*}
    \sum_{m \in \Z}e^{-\frac{16\pi^2}{9\ell^2}m^2t} &= \frac{3\ell}{4\sqrt{\pi t}}\sum_{m \in \Z}e^{-\frac{9\ell^2m^2}{16t}}, \\
    \sum_{n \in \Z}e^{-\frac{16\pi^2}{3\ell^2}n^2t} &= \frac{\ell}{4}\sqrt{\frac{3}{\pi t}}\sum_{n \in \Z}e^{-\frac{3\ell^2n^2}{16t}}, \\
    \sum_{m \in \Z}e^{-\frac{16\pi^2}{9\ell^2}(m+\frac{1}{2})^2t} &= \frac{3\ell}{4\sqrt{\pi t}}\sum_{m \in \Z}(-1)^m e^{-\frac{9\ell^2m^2}{16t}}, \\
    \sum_{n \in \Z}e^{-\frac{16\pi^2}{3\ell^2}(n+\frac{1}{2})^2t} &= \frac{\ell}{4}\sqrt{\frac{3}{\pi t}}\sum_{n \in \Z}(-1)^n e^{-\frac{3\ell^2n^2}{16t}}.
\end{align*}
This gives
\begin{align*}
    \heateqd(t) &= \frac{1}{6}\bigg[\frac{3\ell^2\sqrt{3}}{16\pi t}\sum_{m \in \Z}e^{-\frac{9\ell^2m^2}{16t}}\sum_{n \in \Z}e^{-\frac{3\ell^2n^2}{16t}} + \frac{3\ell^2\sqrt{3}}{16\pi t}\sum_{m \in \Z}(-1)^me^{-\frac{9\ell^2m^2}{16t}}\sum_{n \in \Z}(-1)^ne^{-\frac{3\ell^2n^2}{16t}} \\
    &- \frac{9\ell}{4\sqrt{\pi t}}\sum_{m \in \Z} e^{-\frac{9\ell^2m^2}{16t}} + 2\bigg] \\
    &= \frac{1}{6}\bigg[\frac{3\ell^2\sqrt{3}}{16\pi t}\left(1 + 2\sum_{m = 1}^\infty e^{-\frac{9\ell^2m^2}{16t}}\right)\left(1 + 2\sum_{n = 1}^\infty e^{-\frac{3\ell^2n^2}{16t}}\right) \\
    &+ \frac{3\ell^2\sqrt{3}}{16\pi t}\left(1 + 2\sum_{m = 1}^\infty (-1)^me^{-\frac{9\ell^2m^2}{16t}}\right)\left(1 + 2\sum_{n = 1}^\infty (-1)^ne^{-\frac{3\ell^2n^2}{16t}}\right) \\
    &- \frac{9\ell}{4\sqrt{\pi t}}\left(1 + 2\sum_{m = 1}^\infty e^{-\frac{9\ell^2m^2}{16t}}\right) + 2\biggr] \\
    &= \frac{\ell^2\sqrt{3}}{16\pi t} - \frac{3\ell}{8\sqrt{\pi t}} + \frac{1}{3} + \frac{\ell^2\sqrt{3}}{16\pi t}\sum_{m=1}^\infty e^{-\frac{9\ell^2m^2}{16t}} + \frac{\ell^2\sqrt{3}}{16\pi t}\sum_{n=1}^\infty e^{-\frac{3\ell^2n^2}{16t}} + \frac{\ell^2\sqrt{3}}{8\pi t}\sum_{m=1}^\infty\sum_{n=1}^\infty e^{-\frac{3\ell^2(3m^2+n^2)}{16t}} \\
    &+ \frac{\ell^2\sqrt{3}}{16\pi t}\sum_{m=1}^\infty (-1)^me^{-\frac{9\ell^2m^2}{16t}} + \frac{\ell^2\sqrt{3}}{16\pi t}\sum_{n=1}^\infty (-1)^ne^{-\frac{3\ell^2n^2}{16t}} + \frac{\ell^2\sqrt{3}}{8\pi t}\sum_{m=1}^\infty\sum_{n=1}^\infty (-1)^{m+n}e^{-\frac{3\ell^2(3m^2+n^2)}{16t}} \\
    &- \frac{3\ell}{4\sqrt{\pi t}}\sum_{m=1}^\infty e^{-\frac{9\ell^2m^2}{16t}}.
\end{align*}
Since
\begin{align*}
    &\sum_{m=1}^\infty e^{-\frac{9\ell^2m^2}{16t}} + \sum_{m=1}^\infty (-1)^m e^{-\frac{9\ell^2m^2}{16t}} = 2\sum_{m=1}^\infty e^{-\frac{9\ell^2m^2}{4t}}, \\ 
    &\sum_{n=1}^\infty e^{-\frac{3\ell^2n^2}{16t}} + \sum_{n=1}^\infty (-1)^n e^{-\frac{3\ell^2n^2}{16t}} = 2\sum_{n=1}^\infty e^{-\frac{3\ell^2n^2}{4t}}, \\
    &\sum_{m=1}^\infty\sum_{n=1}^\infty e^{-\frac{3\ell^2(3m^2+n^2)}{16t}} + \sum_{m=1}^\infty\sum_{n=1}^\infty (-1)^{m+n}e^{-\frac{3\ell^2(3m^2+n^2)}{16t}} \\
    = 2&\sum_{m=1}^\infty\sum_{n=1}^\infty e^{-\frac{3\ell^2(3m^2+n^2)}{4t}} + 2\sum_{m=1}^\infty\sum_{n=1}^\infty e^{-\frac{3\ell^2(3(2m-1)^2+(2n-1)^2)}{16t}},
\end{align*}
this simplifies to
\begin{align*}
    \heateqd(t) &= \frac{\ell^2\sqrt{3}}{16\pi t} - \frac{3\ell}{8\sqrt{\pi t}} + \frac{1}{3} - \frac{3\ell}{4\sqrt{\pi t}}\sum_{m=1}^\infty e^{-\frac{9\ell^2m^2}{16t}} + \frac{\ell^2\sqrt{3}}{8\pi t}\sum_{n=1}^\infty e^{-\frac{3\ell^2n^2}{4t}} + \frac{\ell^2\sqrt{3}}{8\pi t}\sum_{m=1}^\infty e^{-\frac{9\ell^2m^2}{4t}} \\
    &+ \frac{\ell^2\sqrt{3}}{4\pi t}\sum_{m=1}^\infty\sum_{n=1}^\infty e^{-\frac{3\ell^2(3m^2+n^2)}{4t}} + \frac{\ell^2\sqrt{3}}{4\pi t}\sum_{m=1}^\infty\sum_{n=1}^\infty e^{-\frac{3\ell^2(3(2m-1)^2+(2n-1)^2)}{16t}}.
\end{align*}
The eigenvalues of the same equilateral triangle with Neumann boundary conditions are
\begin{equation*}
    \lambda_{m,n} = \frac{16\pi^2}{9\ell^2}(m^2+mn+n^2), \,\, m, n \geq 0,
\end{equation*}
so the heat trace becomes
\begin{align*}
    \heateqn(t) &= \sum_{m=0}^\infty\sum_{n=0}^\infty e^{-\frac{16\pi^2}{9\ell^2}(m^2+mn+n^2)t} = \sum_{n=0}^\infty e^{-\frac{16\pi^2}{9\ell^2}n^2t} + \sum_{m=1}^\infty\sum_{n=0}^\infty e^{-\frac{16\pi^2}{9\ell^2}(m^2+mn+n^2)t} \\
    &= 1 + \sum_{n=1}^\infty e^{-\frac{16\pi^2}{9\ell^2}n^2t} + \sum_{m=1}^\infty e^{-\frac{16\pi^2}{9\ell^2}m^2t} + \sum_{m=1}^\infty\sum_{n=1}^\infty e^{-\frac{16\pi^2}{9\ell^2}(m^2+mn+n^2)t} \\
    &= 1 + 2\sum_{m=1}^\infty e^{-\frac{16\pi^2}{9\ell^2}m^2t} + \heateqd(t).
\end{align*}
Since
\begin{equation*}
    \sum_{m=1}^\infty e^{-\frac{16\pi^2}{9\ell^2}m^2t} = \frac{1}{2}\left(\frac{3\ell}{4\sqrt{\pi t}} - 1\right) + \frac{3\ell}{4\sqrt{\pi t}} \sum_{m=1}^\infty e^{-\frac{9\ell^2m^2}{16t}},
\end{equation*}
we get
\begin{align*}
    \heateqn(t) &= \frac{\ell^2\sqrt{3}}{16\pi t} + \frac{3\ell}{8\sqrt{\pi t}} + \frac{1}{3} + \frac{3\ell}{4\sqrt{\pi t}}\sum_{m=1}^\infty e^{-\frac{9\ell^2m^2}{16t}} + \frac{\ell^2\sqrt{3}}{8\pi t}\sum_{n=1}^\infty e^{-\frac{3\ell^2n^2}{4t}} + \frac{\ell^2\sqrt{3}}{8\pi t}\sum_{m=1}^\infty e^{-\frac{9\ell^2m^2}{4t}} \\
    &+ \frac{\ell^2\sqrt{3}}{4\pi t}\sum_{m=1}^\infty\sum_{n=1}^\infty e^{-\frac{3\ell^2(3m^2+n^2)}{4t}} + \frac{\ell^2\sqrt{3}}{4\pi t}\sum_{m=1}^\infty\sum_{n=1}^\infty e^{-\frac{3\ell^2(3(2m-1)^2+(2n-1)^2)}{16t}}.
\end{align*}

\end{proof} 
It follows from Theorem \ref{th:eqtri_asy_htrace} that 
\begin{align*}
    \heateqd(t) &= \frac{\ell^2\sqrt{3}}{16\pi t} - \frac{3\ell}{8\sqrt{\pi t}} + \frac{1}{3} + \mathcal{O}(e^{-\frac{9\ell^2 - \epsilon}{16t}}), \,\, t \to 0, \\
    \heateqn(t) &= \frac{\ell^2\sqrt{3}}{16\pi t} + \frac{3\ell}{8\sqrt{\pi t}} + \frac{1}{3} + \mathcal{O}(e^{-\frac{9\ell^2 - \epsilon}{16t}}), \,\, t \to 0,
\end{align*}
for any $\epsilon > 0$. Consequently, the infimum over all $c$ such that the remainder is $\mathcal O (e^{-c/t})$ is $\frac{9 \ell^2}{16}$.  By \cite[p. 43]{durso1988inverse}, this is the square of half the length of the shortest closed geodesic in the equilateral triangle of side length $\ell$.

\section{Spectral invariants of isosceles right triangles} \label{sec:isosceles_right}
By \cite{aronovitch2012nodal}, the eigenvalues of an isosceles right triangle with area $\frac{a^2}{2}$, and therewith legs of length $a$ are
\begin{equation} \label{eq:isosceles_right_ev_dbc}
    \lambda_{m,n} = \frac{\pi^2(m^2+n^2)}{a^2}, \,\, m > n \geq 1.
\end{equation}
The fact that these are \em all \em eigenvalues including multiplicities follows from 
\cite[p. 168]{kuttler1984eigenvalues}; see also \cite{gottlieb1988eigenvalues} and \cite[p. 756]{morse1946methods}.

\subsection{The spectral zeta function and zeta-regularized determinant of isosceles right triangles} \label{ss:zeta_isosceles_right}
Our first result for the isosceles right triangle concerns two equivalent expressions for the spectral zeta function.  To the best of our knowledge these expressions are new.  

\begin{proposition} \label{prop:zeta_det_isosceles_right} The spectral zeta function of an isosceles right triangle with area $a^2/2$ and with the Dirichlet boundary condition is equivalently given by the expressions 
\begin{align*}
    \zetart(s) = &-\frac{1}{4}\left(\frac{a}{\pi}\right)^{2s}\zeta_R(2s) - \frac{1}{2^{s+1}}\left(\frac{a}{\pi}\right)^{2s}\zeta_R(2s) + \frac{\sqrt{\pi}}{4}\left(\frac{a}{\pi}\right)^{2s} \frac{\zeta_R(2s-1)\Gamma(s-1/2)}{\Gamma(s)} \\ 
    &+ \frac{1}{2}\left(\frac{a^2}{\pi}\right)^s\frac{1}{\Gamma(s)}\sum_{n=1}^\infty n^{s-1/2}\sum_{d|n}d^{1-2s} \int_0^\infty x^{s-3/2}e^{-\pi n(x+x^{-1})} dx
\end{align*}
and 
\begin{equation*}
    \zetart(s) = \frac{a^{2s}}{2} \left[\frac{1}{4}G_\diamondsuit(s) - \frac{1}{\pi^{2s}}\zeta_R(2s) - \frac{1}{(2\pi^2)^s}\zeta_R(2s)\right].
\end{equation*}
Here,
\begin{equation}
    G_\diamondsuit(s) = \sum_{m \in \Z} \sum_{n \in \Z} \frac{1}{\pi^{2s}|m + ni|^{2s}}.  \label{eq:Gdiamond}
\end{equation}
\end{proposition}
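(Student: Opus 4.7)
The plan is to mirror the approach used in Proposition \ref{prop:zeta_det_rect}: exploit the symmetry of the index set $\{(m,n): m > n \geq 1\}$ to convert the restricted sum into a doubly-infinite lattice sum, and then apply the Selberg--Chowla formula exactly as in the rectangular case with side lengths $a = b = 1$.

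First, I would use the symmetry $m \leftrightarrow n$ in $m^2 + n^2$ to write
\[
2 \sum_{m > n \geq 1} \frac{1}{(m^2+n^2)^s} = \sum_{\substack{m,n \geq 1 \\ m \neq n}} \frac{1}{(m^2+n^2)^s} = \sum_{m,n \geq 1} \frac{1}{(m^2+n^2)^s} - \frac{\zeta_R(2s)}{2^s},
\]
where the diagonal $m = n$ contributes $\sum_{m \geq 1}(2m^2)^{-s} = 2^{-s}\zeta_R(2s)$. Next, I would symmetrize over the full punctured lattice via
\[
\sum_{m \in \Z} \sum_{n \in \Z} \frac{1}{(m^2+n^2)^s} = 4 \sum_{m,n \geq 1} \frac{1}{(m^2+n^2)^s} + 4\zeta_R(2s),
\]
with the abuse of notation \eqref{eq:abuse}, since the four open quadrants contribute equally and the two punctured coordinate axes together give $4\zeta_R(2s)$. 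Combining these two identities yields
\[
\sum_{m > n \geq 1} \frac{1}{(m^2+n^2)^s} = \frac{1}{8} \sum_{m \in \Z} \sum_{n \in \Z} \frac{1}{(m^2+n^2)^s} - \frac{\zeta_R(2s)}{2} - \frac{\zeta_R(2s)}{2^{s+1}}.
\]

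For the second expression, I would identify the right-hand lattice sum, once multiplied by $\pi^{-2s}$, with $G_\diamondsuit(s)$ (this is \eqref{eq:Gdiamond} with $z = i$, so that $|m+ni|^2 = m^2+n^2$); multiplying the previous display by $a^{2s}/\pi^{2s}$ and factoring out $\tfrac{1}{2}$ then produces exactly the claimed formula for $\zetart(s)$. For the first expression, I would apply the Selberg--Chowla formula from \cite[p.~87]{selberg1967epstein} specialized to $a=b=1$, exactly as invoked in the proof of Proposition \ref{prop:zeta_det_rect}: this rewrites $\sum_{m,n\in\Z}(m^2+n^2)^{-s}$ as $2\zeta_R(2s) + 2\sqrt{\pi}\,\zeta_R(2s-1)\Gamma(s-1/2)/\Gamma(s) + Q(s)$ for the corresponding Bessel-type residual $Q(s)$ with $e^{-\pi n(x+x^{-1})}$ in the integrand, and substitution followed by simplification of the $\zeta_R(2s)$ coefficient produces the stated first expression.

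There is no serious obstacle beyond careful bookkeeping. The only subtlety is to correctly track the two surviving coefficients of $\zeta_R(2s)$: one, arising from the quadrant-to-lattice conversion combined with the Selberg--Chowla constant $2\zeta_R(2s)$, collapses to $-\tfrac{1}{4}(a/\pi)^{2s}\zeta_R(2s)$; the other, arising from the diagonal $m=n$, remains as $-2^{-s-1}(a/\pi)^{2s}\zeta_R(2s)$. The rest is routine distribution of the $a^{2s}/\pi^{2s}$ factors, and a sanity check that the two claimed expressions agree is built in, since they are obtained from the same lattice-sum identity by invoking either $G_\diamondsuit(s)$ or Selberg--Chowla at the last step.
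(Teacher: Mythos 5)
Your argument is correct and follows essentially the same route as the paper: the identity $2\sum_{m>n\geq 1}(m^2+n^2)^{-s}=\sum_{m,n\geq 1}(m^2+n^2)^{-s}-2^{-s}\zeta_R(2s)$, the quadrant-to-lattice symmetrization, and the Selberg--Chowla formula at $a=b=1$ are exactly the ingredients the paper uses, except that the paper packages the middle two steps by writing $\zetart(s)=\tfrac12\zeta_\blacksquare(s)-\tfrac12\bigl(\tfrac{a^2}{2\pi^2}\bigr)^s\zeta_R(2s)$ and citing Proposition \ref{prop:zeta_det_rect} for the square, whereas you unwind that reference inline. Your bookkeeping of the $\zeta_R(2s)$ coefficients and of the prefactor $\tfrac18 Q(s)\mapsto \tfrac12\bigl(\tfrac{a^2}{\pi}\bigr)^s\Gamma(s)^{-1}\sum(\cdots)$ checks out.
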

\begin{proof}
We have
\begin{equation*}
    \zetart(s) = \sum_{\lambda_{m,n}} \frac{1}{\lambda_{m,n}^s} = \left(\frac{a}{\pi}\right)^{2s}\sum_{m>n\geq 1} \frac{1}{(m^2+n^2)^s}.
\end{equation*}
Since
\begin{equation*}
    \sum_{m = 1}^\infty \sum_{n = 1}^\infty \frac{1}{(m^2+n^2)^s} = 2\sum_{m>n\geq 1} \frac{1}{(m^2+n^2)^s} + \frac{\zeta_R(2s)}{2^s},
\end{equation*}
we obtain
\begin{equation} \label{eq:zeta_iso_rect_relation}
    \zetart(s) =
    \frac{1}{2}\zeta_\blacksquare(s) - \frac{1}{2}\left(\frac{a^2}{2\pi^2}\right)^{s}\zeta_R(2s),
\end{equation}
where $\zeta_\blacksquare$ is the spectral zeta function of a square with sides of length $a$. By Proposition \ref{prop:zeta_det_rect},
\begin{align*}
    \zeta_\blacksquare(s) &= \frac{1}{2}\left(\frac{a}{\pi}\right)^{2s}\bigg[-\zeta_R(2s) + \sqrt{\pi}\frac{\zeta_R(2s-1)\Gamma(s-1/2)}{\Gamma(s)}\bigg] \\ &+ \left(\frac{a^2}{\pi}\right)^s\frac{1}{\Gamma(s)}\sum_{n=1}^\infty n^{s-1/2}\sum_{d|n}d^{1-2s} \int_0^\infty x^{s-3/2}e^{-\pi n(x+x^{-1})} dx, \\
    \zeta_\blacksquare(s) &= \frac{a^{2s}}{4}\bigg[G_\diamondsuit(s) - \frac{4}{\pi^{2s}}\zeta_R(2s)\bigg],
\end{align*}
from which the result follows.
\end{proof}

These expressions allow one to meromorphically extend the spectral zeta function to the complex plan and evaluate it at points of interest.  In particular, as a corollary we obtain two expressions for $\zetart'(0)$ which can be used to calculate the zeta-regularized determinant.

\begin{corollary} \label{cor:zeta_det_isosceles}
The zeta-regularized determinant of an isosceles right triangle with area $a^2/2$ and with the Dirichlet boundary condition is $e^{-\zetart'(0)}$ with $\zetart'(0)$ equivalently given by 
\begin{equation*}
    \zetart'(0) = \frac{\log(4a^3)}{4} + \frac{\pi}{24} + \frac{1}{2}\sum_{n=1}^\infty \frac{1}{ne^{2\pi n}}\sum_{d|n} d,
\end{equation*}
\begin{equation*}
    \zetart'(0) = \frac{1}{4}\log\left(\frac{4a^3}{|\eta(i)|^2}\right) = \frac{\log(4a^3)}{4} + \frac{\pi}{24} - \frac{1}{2}\sum_{n=1}^\infty \log(1 - e^{-2\pi n}).
\end{equation*}
\end{corollary}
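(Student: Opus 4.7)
The natural strategy is to exploit the relation \eqref{eq:zeta_iso_rect_relation} already established in the proof of Proposition \ref{prop:zeta_det_isosceles_right}, namely
\[
\zetart(s) = \tfrac{1}{2}\zeta_\blacksquare(s) - \tfrac{1}{2}\left(\tfrac{a^2}{2\pi^2}\right)^{s}\zeta_R(2s),
\]
which reduces the problem for the isosceles right triangle to the already-handled case of a square. Differentiating and setting $s=0$ will give $\zetart'(0)$ as $\tfrac{1}{2}\zeta_\blacksquare'(0)$ plus an elementary correction coming from the second term.

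For the correction term, I would compute
\[
\frac{d}{ds}\bigg|_{s=0}\left[\left(\tfrac{a^2}{2\pi^2}\right)^{s}\zeta_R(2s)\right] = \log\!\left(\tfrac{a^2}{2\pi^2}\right)\zeta_R(0) + 2\zeta_R'(0),
\]
and then substitute the classical values $\zeta_R(0) = -1/2$ and $\zeta_R'(0) = -\tfrac{1}{2}\log(2\pi)$ from \eqref{eq:values_Riemann_zeta_Gamma}. A short simplification shows that $-\tfrac{1}{2}$ times this equals $\tfrac{1}{4}\log(2a^2)$.

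For the main term I would invoke Corollary \ref{cor:zeta_det_rect} specialized to the square case $a=b$, which furnishes the two equivalent expressions
\[
\zeta_\blacksquare'(0) = \tfrac{1}{2}\log(2a) + \tfrac{\pi}{12} + \sum_{n=1}^\infty \frac{1}{ne^{2\pi n}}\sum_{d\mid n}d = \tfrac{1}{2}\log\!\left(\tfrac{2a}{|\eta(i)|^{2}}\right),
\]
noting that for the unit aspect ratio the parameter $z=ai/b$ becomes $z=i$. Adding $\tfrac{1}{2}\zeta_\blacksquare'(0)$ to $\tfrac{1}{4}\log(2a^2)$ yields $\tfrac{1}{4}\log(4a^3) + \tfrac{\pi}{24} + \tfrac{1}{2}\sum_n (ne^{2\pi n})^{-1}\sum_{d\mid n} d$ in the first form, and $\tfrac{1}{4}\log(4a^3/|\eta(i)|^2)$ in the second; the equivalence of these two expressions follows from the identity $-\sum_{n\geq 1}\log(1-q^n)=\sum_{n\geq 1}(q^n/n)\sum_{d\mid n}d$ already verified in the proof of Corollary \ref{cor:zeta_det_rect}.

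There is no real obstacle here: once the reduction to the square via \eqref{eq:zeta_iso_rect_relation} is observed, the computation is purely bookkeeping involving the logarithmic derivative of $x^s$ and the values of $\zeta_R$ and $\zeta_R'$ at zero. The only point requiring a bit of care is keeping track of the two separate factors $\left(\tfrac{a}{\pi}\right)^{2s}\zeta_R(2s)$ and $\left(\tfrac{a^2}{2\pi^2}\right)^{s}\zeta_R(2s)$ appearing in Proposition \ref{prop:zeta_det_isosceles_right}, so as not to double-count the correction when passing from $\zeta_\blacksquare$ to $\zetart$.
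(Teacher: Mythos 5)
Your proposal is correct and follows essentially the same route as the paper: differentiate the relation $\zetart(s) = \tfrac{1}{2}\zeta_\blacksquare(s) - \tfrac{1}{2}\left(\tfrac{a^2}{2\pi^2}\right)^{s}\zeta_R(2s)$ from Proposition \ref{prop:zeta_det_isosceles_right}, evaluate at $s=0$ using the special values of $\zeta_R$ and $\zeta_R'$, and plug in the two expressions for $\zeta_\blacksquare'(0)$ from Corollary \ref{cor:zeta_det_rect} with $a=b$ and $z=i$. Your simplification of the correction term to $\tfrac{1}{4}\log(2a^2)$ and the resulting combination into $\tfrac{1}{4}\log(4a^3)$ both check out.
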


\begin{proof}
    From \eqref{eq:zeta_iso_rect_relation}, we get
    \begin{equation*}
    \zetart'(s) = \frac{1}{2}\zeta_\blacksquare'(s) - \frac{1}{2}\left(\frac{a^2}{2\pi^2}\right)^{s}\log\left(\frac{a^2}{2\pi^2}\right)\zeta_R(2s) - \left(\frac{a^2}{2\pi^2}\right)^{s}\zeta_R'(2s).
\end{equation*}
By Corollary \ref{cor:zeta_det_rect}, we have the following equivalent expressions
\begin{align*}
    \zetart'(0) &= \frac{1}{2}\left[\frac{1}{2}\log(2a) + \frac{\pi }{12 } + \sum_{n=1}^\infty \frac{1}{ne^{2\pi n}}\sum_{d|n} d\right] - \frac{1}{2}\log\left(\frac{a^2}{2\pi^2}\right)\zeta_R(0) - \zeta_R'(0), \\
    \zetart'(0) &= \frac{1}{4}\log\left(\frac{2a}{|\eta(i)|^2}\right) + \frac{1}{4}\log\left(\frac{a^2}{2\pi^2}\right) + \frac{1}{2}\log(2\pi), 
\end{align*}
which respectively simplify to
\begin{align*}
    \zetart'(0) &= \frac{\log(4a^3)}{4} + \frac{\pi}{24} + \frac{1}{2}\sum_{n=1}^\infty \frac{1}{ne^{2\pi n}}\sum_{d|n} d, \\
    \zetart'(0) &= \frac{1}{4}\log\left(\frac{4a^3}{|\eta(i)|^2}\right).
\end{align*}
\end{proof}

\subsection{The heat trace of isosceles right triangles} \label{s:heat_isoright}
For the isosceles right triangle with area $a^2/2$ and with Dirichlet boundary conditions, the heat trace is 
\begin{align*}
    \heatrtd(t) = \sum_{\lambda_{n,m}} e^{-\lambda_{n,m}t} = \sum_{m>n\geq 1} e^{-\frac{\pi^2(m^2+n^2)}{a^2}t}.
\end{align*}
Let $q = e^{-\frac{\pi^2}{a^2}t}$ and note that
\begin{align*}
    &\sum_{m=1}^\infty \sum_{n=1}^\infty q^{m^2+n^2} = 2\sum_{m>n\geq 1} q^{m^2+n^2} + \sum_{m = 1}^\infty q^{2m^2}.
\end{align*}
Since
\begin{align*}
    \sum_{m=1}^\infty \sum_{n=1}^\infty q^{m^2+n^2} &= \left(\sum_{m=1}^\infty q^{m^2}\right)^2 = \left(\frac{\Theta_3(q)-1}{2}\right)^2, \\
    \sum_{m = 1}^\infty q^{2m^2} &= \frac{\Theta_3(q^2) - 1}{2},
\end{align*}
it follows that
\begin{equation*}
    \heatrtd(t) = \sum_{m>n\geq 1} q^{m^2+n^2} = \frac{1}{2}\left[\left(\frac{\Theta_3(q)-1}{2}\right)^2 -\frac{\Theta_3(q^2) - 1}{2}\right] = \frac{\Theta_3(q)^2 - 2\Theta_3(q) - 2\Theta_3(q^2) + 3}{8}.
\end{equation*}
Here we use our expressions for the heat kernel for the isosceles right triangle to obtain further terms in the short time asymptotic expansion of the heat trace as well as a sharp remainder term. 
\begin{theorem} \label{th:new_heat_isosceles}
    The heat trace with the Dirichlet boundary condition for an isosceles right triangle of area $\frac{a^2}{2}$ admits the asymptotic expansion 
    \begin{align*}
        \heatrtd(t) = \frac{a^2}{8\pi t} - \frac{a(2+\sqrt{2})}{8\sqrt{\pi t}} + \frac{3}{8} - \frac{a}{2\sqrt{2 \pi t}}e^{-\frac{a^2}{2t}}+ \frac{a^2}{2\pi t}e^{-\frac{a^2}{t}}   + \mathcal{O}(t^{-1/2}e^{-\frac{a^2}{t}}), \,\, t \to 0.
    \end{align*}
    The heat trace with the Neumann boundary condition admits the asymptotic expansion
    \begin{align*}
        \heatrtn(t) = \frac{a^2}{8\pi t} + \frac{a(2+\sqrt{2})}{8\sqrt{\pi t}} + \frac{3}{8} + \frac{a}{2\sqrt{2 \pi t}}e^{-\frac{a^2}{2t}} + \frac{a^2}{2\pi t}e^{-\frac{a^2}{t}}   + \mathcal{O}(t^{-1/2}e^{-\frac{a^2}{t}}), \,\, t \to 0.
    \end{align*}
The remainders are sharp.  
\end{theorem}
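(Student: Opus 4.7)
The plan is to begin from the closed form
$\heatrtd(t) = \frac{\Theta_3(q)^2 - 2\Theta_3(q) - 2\Theta_3(q^2) + 3}{8}$, $q = e^{-\pi^2 t/a^2}$,
derived just above Theorem \ref{th:new_heat_isosceles}, and to first establish an analogous identity in the Neumann case. The Neumann eigenvalues on an isosceles right triangle arise from those symmetric eigenfunctions on the square $[0,a]^2$ with Neumann boundary conditions of the form $\cos(m\pi x/a)\cos(n\pi y/a)$ that are invariant under reflection across the diagonal $y=x$, giving eigenvalues $\pi^2(m^2+n^2)/a^2$ with $m\ge n\ge 0$. A short counting argument modelled on the one used for the Dirichlet case yields $\heatrtn(t) = \frac{\Theta_3(q)^2 + 2\Theta_3(q) + 2\Theta_3(q^2) + 3}{8}$, so only the signs in front of $\Theta_3(q)$ and $\Theta_3(q^2)$ change between the two cases.

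Next I would apply Poisson summation exactly as in the rectangle argument of Theorem \ref{th:new_heat_rect} to convert both theta series into their rapidly convergent small-$t$ forms:
$\Theta_3(q) = \frac{a}{\sqrt{\pi t}}\bigl(1 + 2 S_1(t)\bigr)$, $\Theta_3(q^2) = \frac{a}{\sqrt{2\pi t}}\bigl(1 + 2 S_2(t)\bigr)$,
where $S_1(t) = \sum_{n\ge 1} e^{-a^2 n^2/t}$ and $S_2(t) = \sum_{n\ge 1} e^{-a^2 n^2/(2t)}$. Expanding $\Theta_3(q)^2$ produces three pieces: a leading constant $a^2/(\pi t)$, a linear term $\frac{4a^2}{\pi t}S_1$, and a quadratic tail $\frac{4a^2}{\pi t}S_1^2$. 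Substituting into the Dirichlet and Neumann formulas and grouping terms by their size as $t\to 0^+$ yields the polynomial part $\frac{a^2}{8\pi t} \mp \frac{a(2+\sqrt{2})}{8\sqrt{\pi t}} + \frac{3}{8}$ (the elementary identity $(1 + 1/\sqrt{2})/4 = (2+\sqrt{2})/8$ accounts for the coefficient), the first exponentially small contribution $\mp\frac{a}{2\sqrt{2\pi t}}e^{-a^2/(2t)}$ from the $n=1$ term of $S_2$, and the next contribution $+\frac{a^2}{2\pi t}e^{-a^2/t}$ from the $n=1$ term of $S_1$ inside the $\Theta_3(q)^2$ linear piece.

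Every other term left over is controlled by $\mathcal{O}(t^{-1/2}e^{-a^2/t})$: the residual $\mp\frac{a}{2\sqrt{\pi t}}\bigl(S_1(t) - e^{-a^2/t}\bigr)$ and the analogous $\mp\frac{a}{2\sqrt{2\pi t}}\bigl(S_2(t)-e^{-a^2/(2t)}\bigr)$ are $\mathcal{O}(t^{-1/2}e^{-2a^2/t})$, while $\frac{a^2}{2\pi t}S_1^2$ and $\frac{a^2}{2\pi t}(S_1 - e^{-a^2/t})$ are $\mathcal{O}(t^{-1}e^{-2a^2/t})$. The dominant non-explicit contribution is the $n=1$ piece of $\mp\frac{a}{2\sqrt{\pi t}}S_1$, which equals $\mp\frac{a}{2\sqrt{\pi t}}e^{-a^2/t}$ and is precisely of the asserted size; this is also what proves sharpness, since its nonzero prefactor shows the remainder cannot be $\mathcal{O}(e^{-c/t})$ for any $c>a^2$. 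The whole proof is essentially bookkeeping — the main obstacle is simply the careful tracking of which cross-terms from expanding $\Theta_3(q)^2$ belong in the explicit expansion versus the remainder, and verifying that the remainder estimate is not spoiled by that leftover $e^{-a^2/t}$ term with a $t^{-1/2}$ prefactor.
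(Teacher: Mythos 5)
Your proposal is correct and follows essentially the same route as the paper: both apply Poisson summation to the one-dimensional Gaussian sums underlying $\Theta_3(q)$ and $\Theta_3(q^2)$, expand the square, and identify the leftover term $\mp\frac{a}{2\sqrt{\pi t}}e^{-a^2/t}$ as the dominant contribution to the (sharp) remainder. The only cosmetic difference is that you derive a separate closed theta-function identity for $\heatrtn$, whereas the paper writes $\heatrtn(t) = 1 + \sum_{m\geq 1}e^{-\pi^2m^2t/a^2} + \sum_{n\geq 1}e^{-2\pi^2n^2t/a^2} + \heatrtd(t)$; these are algebraically identical.
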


\begin{proof}
    We have
    \begin{align*}
    \heatrtd(t) = \sum_{m > n \geq 1} e^{-\frac{\pi^2(m^2+n^2)}{a^2}t}.
\end{align*}
Since
\begin{equation*}
    \sum_{m = 1}^\infty\sum_{n=1}^\infty e^{-\frac{\pi^2(m^2+n^2)}{a^2}t} = 2\heatrtd(t) + \sum_{m=1}^\infty e^{-\frac{2\pi^2m^2}{a^2}t}
\end{equation*}
and
\begin{align*}
    \sum_{m=1}^\infty e^{-\frac{\pi^2m^2}{a^2}t} &= \frac{1}{2}\left(\frac{a}{\sqrt{\pi t}} - 1\right) + \frac{a}{\sqrt{\pi t}}\sum_{m=1}^\infty e^{-\frac{m^2a^2}{t}}, \\
    \sum_{m=1}^\infty e^{-\frac{2\pi^2m^2}{a^2}t} &= \frac{1}{2}\left(\frac{a}{\sqrt{2\pi t}} - 1\right) + \frac{a}{\sqrt{2\pi t}}\sum_{m=1}^\infty e^{-\frac{m^2a^2}{2t}}
\end{align*}
by Poisson's summation formula \cite[Lemma 2.2.2]{verhoevencan}, we obtain
\begin{align*}
    \heatrtd(t) &= \frac{1}{2}\left[\sum_{m = 1}^\infty\sum_{n=1}^\infty e^{-\frac{\pi^2(m^2+n^2)}{a^2}t} - \sum_{m=1}^\infty e^{-\frac{2\pi^2m^2}{a^2}t}\right] \\
    &= \frac{1}{2}\left[\left(\frac{1}{2}\left(\frac{a}{\sqrt{\pi t}} - 1\right) + \frac{a}{\sqrt{\pi t}}\sum_{m=1}^\infty e^{-\frac{m^2a^2}{t}}\right)^2 - \frac{1}{2}\left(\frac{a}{\sqrt{2\pi t}} - 1\right) - \frac{a}{\sqrt{2\pi t}}\sum_{m=1}^\infty e^{-\frac{m^2a^2}{2t}}\right] \\
    &= \frac{a^2}{8\pi t} - \frac{a(2+\sqrt{2})}{8\sqrt{\pi t}} + \frac{3}{8} - \frac{a}{2\sqrt{2\pi t}}\sum_{m=1}^\infty e^{-\frac{m^2a^2}{2t}} + \frac{a^2}{2\pi t}\sum_{m=1}^\infty e^{-\frac{m^2a^2}{t}} \\ &- \frac{a}{2\sqrt{\pi t}}\sum_{m=1}^\infty e^{-\frac{m^2a^2}{t}} + \frac{a^2}{2\pi t}\left(\sum_{m=1}^\infty e^{-\frac{m^2a^2}{t}}\right)^2.
\end{align*}
The eigenvalues of an isosceles right triangle with area $a^2/2$ with Neumann boundary conditions are
\begin{equation*}
    \lambda_{m,n} = \frac{\pi^2(m^2+n^2)}{a^2}, \,\, m \geq n \geq 0,
\end{equation*}
so the corresponding heat trace becomes
\begin{align*}
    \heatrtn(t) = \sum_{m \geq n \geq 0} e^{-\frac{\pi^2(m^2+n^2)}{a^2}t} = 1 + \sum_{m=1}^\infty e^{-\frac{\pi^2m^2}{a^2}t} + \sum_{n=1}^\infty e^{-\frac{2\pi^2n^2}{a^2}t} + \heatrtd(t).
\end{align*}
Our expression for $\heatrtd(t)$ then gives 
\begin{align*}
    \heatrtn(t) &= \frac{a^2}{8\pi t} + \frac{a(2+\sqrt{2})}{8\sqrt{\pi t}} + \frac{3}{8} + \frac{a}{2\sqrt{2\pi t}}\sum_{m=1}^\infty e^{-\frac{m^2a^2}{2t}} + \frac{a^2}{2\pi t}\sum_{m=1}^\infty e^{-\frac{m^2a^2}{t}} \\ &+ \frac{a}{2\sqrt{\pi t}}\sum_{m=1}^\infty e^{-\frac{m^2a^2}{t}} + \frac{a^2}{2\pi t}\left(\sum_{m=1}^\infty e^{-\frac{m^2a^2}{t}}\right)^2.
\end{align*}
The proof is now completed by collecting leading order terms.
\end{proof}
Theorem \ref{th:new_heat_isosceles} shows that
\begin{align*}
    \heatrtd(t) &= \frac{a^2}{8\pi t} - \frac{a(2+\sqrt{2})}{8\sqrt{\pi t}} + \frac{3}{8} + \mathcal{O}(e^{-\frac{a^2-\epsilon}{2t}}), \,\, t \to 0, \\
    \heatrtn(t) &= \frac{a^2}{8\pi t} + \frac{a(2+\sqrt{2})}{8\sqrt{\pi t}} + \frac{3}{8} + \mathcal{O}(e^{-\frac{a^2-\epsilon}{2t}}), \,\, t \to 0,
\end{align*}
for any $\epsilon > 0$. Again, $a^2/2$ is the square of half the length of the shortest closed geodesic in $\Omega$ (see \cite[p. 43]{durso1988inverse}).

\section{Spectral invariants of hemi-equilateral (30-60-90) triangles} \label{s:306090}
By \cite{mccartin2003eigenstructure}, the eigenvalues of the 30-60-90 triangle with hypotenuse of length $\ell$ are given by
\begin{equation*}
    \lambda_{m,n} = \frac{4\pi^2}{27r^2}(m^2+mn+n^2) = \frac{16\pi^2}{9\ell^2}(m^2+mn+n^2), \,\, m > n \geq 1.
\end{equation*}
Here, $r$ is the radius of the inscribed circle of the equilateral triangle obtained by doubling the hemi-equilateral triangle.   McCartin shows in \cite{mccartin2003eigenstructure} how antisymmetric eigenfunctions of equilateral triangles form a complete set of eigenfunctions for 30-60-90 triangles \cite{damle2010understanding}; see also \cite[p. 168]{kuttler1984eigenvalues}.

\subsection{The spectral zeta function and zeta-regularized determinant of hemi-equilateral triangles} \label{ss:zeta_306090}
Our first result for these triangles contains two equivalent expressions for the spectral zeta function that to the best of our knowledge are new. 
\begin{proposition} \label{prop:zeta_det_306090} The spectral zeta function of the hemi-equilateral triangle with hypotenuse of length $\ell$ and with the Dirichlet boundary condition is equivalently given by the expressions
\begin{align*}   
    \zetahemi(s)  &= \frac{1}{12} \left(\frac{3\ell}{4\pi}\right)^{2s} \bigg[ -4\zeta_R(2s) - \frac{6}{3^s}\zeta_R(2s) + \frac{2^{2s}\sqrt{\pi}\zeta_R(2s-1)\Gamma(s-1/2)}{\Gamma(s)3^{s-1/2}} \\ 
    &+ \frac{4\pi^s2^{s-1/2}}{\Gamma(s)3^{s/2-1/4}}\sum_{n=1}^\infty n^{s-1/2}\sum_{d|n}d^{1-2s}(-1)^n \int_0^\infty x^{s-3/2}e^{-\pi n\sqrt{3}(x+x^{-1})/2} dx \bigg], \\
    \zetahemi(s) &= \frac{1}{12}\left(\frac{3\ell}{4}\right)^{2s}\left[G_\heartsuit(s) - \frac{6}{\pi^{2s}}\zeta_R(2s) - \frac{6}{(3\pi^2)^s}\zeta_R(2s)\right].
\end{align*}
Here,
\begin{equation}
    G_\heartsuit(s) = \sum_{m \in \Z}\sum_{k \in \Z}\frac{1}{\pi^{2s}|m+kz|^{2s}}, \,\, z = \frac{-3 + i\sqrt{3}}{2}. \label{eq:Gheart}
\end{equation}
\end{proposition}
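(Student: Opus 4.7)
The plan is to mirror the strategy used for the isosceles right triangle in Proposition \ref{prop:zeta_det_isosceles_right}: express $\zetahemi$ in terms of $\zetaeq$ by exploiting the fact that a hemi-equilateral triangle with hypotenuse $\ell$ is obtained by cutting an equilateral triangle of side length $\ell$ in half along the altitude from the $30^\circ$ vertex. Concretely, by Proposition \ref{prop:2heat_traces} (see also \eqref{eq:eqtri_ev_norm}), the Dirichlet eigenvalues of the equilateral triangle with side length $\ell$ are $\frac{16\pi^2}{9\ell^2}(m^2+mn+n^2)$ for $m,n\geq 1$, while by \cite{mccartin2003eigenstructure} the eigenvalues of the hemi-equilateral triangle are the same expression but restricted to $m > n \geq 1$. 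So the plan is first to establish the identity
\begin{equation*}
    \zetahemi(s) = \tfrac{1}{2}\zetaeq(s) - \tfrac{1}{2}\left(\tfrac{3\ell^2}{16\pi^2}\right)^s \zeta_R(2s)
\end{equation*}
by writing
\begin{equation*}
    \sum_{m,n\geq 1}\frac{1}{(m^2+mn+n^2)^s} = 2\sum_{m>n\geq 1}\frac{1}{(m^2+mn+n^2)^s} + \sum_{m\geq 1}\frac{1}{(3m^2)^s}
\end{equation*}
and noting that the diagonal contribution is $\frac{1}{3^s}\zeta_R(2s)$, which after the normalization $\left(\frac{9\ell^2}{16\pi^2}\right)^s$ becomes $\left(\frac{3\ell^2}{16\pi^2}\right)^s\zeta_R(2s)$.

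Next I would substitute the two formulas for $\zetaeq(s)$ supplied by Proposition \ref{prop:zeta_eqtri}. For the first claimed expression, I would simply divide the first expression of Proposition \ref{prop:zeta_eqtri} by two (turning the prefactor $\tfrac{1}{6}$ into $\tfrac{1}{12}$) and absorb the subtracted diagonal term by writing $\tfrac{1}{2}\left(\tfrac{3\ell^2}{16\pi^2}\right)^s\zeta_R(2s) = \tfrac{1}{12}\left(\tfrac{3\ell}{4\pi}\right)^{2s}\cdot\tfrac{6}{3^s}\zeta_R(2s)$. This accounts precisely for the extra $-\tfrac{6}{3^s}\zeta_R(2s)$ term appearing in the stated formula, while the Selberg sum, Riemann zeta, and Bessel-type integral pieces carry over unchanged.

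For the second expression, I would use the second expression of Proposition \ref{prop:zeta_eqtri} in the same way, observing that $G_\heartsuit(s)$ is defined by exactly the same double sum as $G_\nabla(s)$ with the same lattice modulus $z = \frac{-3+i\sqrt{3}}{2}$, so $G_\heartsuit \equiv G_\nabla$. The algebra then reduces to rewriting $\tfrac{1}{2}\left(\tfrac{3\ell^2}{16\pi^2}\right)^s\zeta_R(2s) = \tfrac{1}{12}\left(\tfrac{3\ell}{4}\right)^{2s}\cdot\tfrac{6}{(3\pi^2)^s}\zeta_R(2s)$, which produces the extra $-\tfrac{6}{(3\pi^2)^s}\zeta_R(2s)$ term in the bracket, leaving the Epstein zeta $G_\heartsuit(s)$ and the $-\tfrac{6}{\pi^{2s}}\zeta_R(2s)$ term intact.

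There is no genuine obstacle here, since the folding identity between the hemi-equilateral and equilateral spectra is clean and the remaining work is bookkeeping of prefactors. The only point requiring some care is making sure the normalization in Proposition \ref{prop:zeta_eqtri} (which uses the side length $\ell$ of the equilateral triangle) matches the normalization here (where $\ell$ is the hypotenuse of the hemi-equilateral triangle, equal to the side length of the doubled equilateral triangle), so that the same $\ell$ may be used throughout without rescaling.
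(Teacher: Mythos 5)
Your proposal is correct and follows essentially the same route as the paper: both establish the identity $\zetahemi(s) = \tfrac{1}{2}\zetaeq(s) - \tfrac{1}{2}\bigl(\tfrac{3\ell^2}{16\pi^2}\bigr)^s\zeta_R(2s)$ via the folding of the double sum over $m,n\geq 1$ into the sum over $m>n\geq 1$ plus the diagonal $\tfrac{1}{3^s}\zeta_R(2s)$, and then invoke Proposition \ref{prop:zeta_eqtri}. Your bookkeeping of the prefactors (converting the subtracted diagonal term into $\tfrac{6}{3^s}\zeta_R(2s)$ and $\tfrac{6}{(3\pi^2)^s}\zeta_R(2s)$ inside the respective brackets) is exactly what the paper leaves implicit in the phrase ``the result now follows.''
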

\begin{proof}
    The spectral zeta function is
\begin{align*}
    \zetahemi(s) = \sum_{\lambda_{m,n}} \frac{1}{\lambda_{m,n}^s} = \left(\frac{3 \ell}{4\pi}\right)^{2s} \sum_{m > n \geq 1} \frac{1}{(m^2+mn+n^2)^s}.
\end{align*}
Since
\begin{align*}
    \sum_{m=1}^\infty\sum_{n=1}^\infty \frac{1}{(m^2+mn+n^2)^s} = 2\sum_{m > n \geq 1} \frac{1}{(m^2+mn+n^2)^s} + \frac{1}{3^s}\zeta_R(2s),
\end{align*}
we can rewrite $\zetahemi$ as
\begin{equation} \label{eq:zeta_eq_306090_relation}
    \begin{split}
        \zetahemi(s) &= \frac{1}{2}\left(\frac{3 \ell}{4\pi}\right)^{2s}\left[\sum_{m=1}^\infty\sum_{n=1}^\infty \frac{1}{(m^2+mn+n^2)^s} - \frac{1}{3^s}\zeta_R(2s)\right] \\
    &= \frac{1}{2}\zetaeq(s) - \frac{1}{2}\left(\frac{3\ell^2}{16\pi^2}\right)^s\zeta_R(2s), 
    \end{split}
\end{equation}
where $\zetaeq$ is the spectral zeta function of the corresponding equilateral triangle with sides of length $\ell$. The result now follows from Proposition \ref{prop:zeta_eqtri}.
\end{proof}

These expressions allow one to meromorphically extend the spectral zeta function to the complex plan and evaluate it at points of interest.  In particular, as a corollary we obtain two expressions for $\zetahemi'(0)$ which can be used to calculate the zeta-regularized determinant.

\begin{corollary} \label{cor:zeta_det_306090}
The zeta-regularized determinant of the hemi-equilateral triangle with hypotenuse of length $\ell$ and with the Dirichlet boundary condition is $e^{-\zetahemi'(0)}$ with $\zetahemi'(0)$ equivalently given by 
\begin{align*}
    \zetahemi'(0) &= \frac{5}{6}\log\left(\frac{\ell}{2}\right) + \frac{7}{12}\log(3) + \frac{\pi\sqrt{3}}{72} + \frac{1}{3}\sum_{n = 1}^\infty \frac{(-1)^n}{ne^{\pi n \sqrt{3}}} \sum_{d | n} d, \\
    \zetahemi'(0) &= \frac 1 3\log\left(\frac{3\ell}{2|\eta(z)|}\right) + \frac{1}{4}\log(3) + \frac{1}{2}\log\left(\frac{\ell}{2}\right), \quad z = \frac{-3 + i\sqrt{3}}{2}.
\end{align*}
\end{corollary}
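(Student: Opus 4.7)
The plan is to exploit the simple linear relation \eqref{eq:zeta_eq_306090_relation} established in the proof of Proposition \ref{prop:zeta_det_306090}, namely
\[
\zetahemi(s) = \frac{1}{2}\zetaeq(s) - \frac{1}{2}\left(\frac{3\ell^2}{16\pi^2}\right)^s \zeta_R(2s),
\]
which expresses the hemi-equilateral spectral zeta function as the sum of a scalar multiple of $\zetaeq(s)$ and an explicit elementary term. Since each summand extends meromorphically to a neighborhood of $s=0$ and is regular there, differentiation commutes with the identification, and one may reduce the entire corollary to a one-line differentiation together with Corollary \ref{cor:zetadet_eqtri}.

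First I would differentiate the displayed identity to obtain
\[
\zetahemi'(s) = \frac{1}{2}\zetaeq'(s) - \frac{1}{2}\left(\frac{3\ell^2}{16\pi^2}\right)^s \log\!\left(\frac{3\ell^2}{16\pi^2}\right)\zeta_R(2s) - \left(\frac{3\ell^2}{16\pi^2}\right)^s \zeta_R'(2s),
\]
then substitute $s=0$ and use the values $\zeta_R(0) = -\tfrac{1}{2}$ and $\zeta_R'(0) = -\tfrac{\log(2\pi)}{2}$ recorded in \eqref{eq:values_Riemann_zeta_Gamma}. This yields the master formula
\[
\zetahemi'(0) = \frac{1}{2}\zetaeq'(0) + \frac{1}{4}\log\!\left(\frac{3\ell^2}{16\pi^2}\right) + \frac{1}{2}\log(2\pi).
\]

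Next I would substitute the two equivalent expressions for $\zetaeq'(0)$ provided by Corollary \ref{cor:zetadet_eqtri}. Inserting the first (elementary) expression and simplifying the combined logarithmic constants via
\[
\frac{1}{4}\log\!\left(\frac{3\ell^2}{16\pi^2}\right) + \frac{1}{2}\log(2\pi) = \frac{1}{4}\log(3) + \frac{1}{2}\log\!\left(\frac{\ell}{2}\right),
\]
together with the identity $\frac{1}{3}\log(3\ell/2) + \frac{1}{4}\log(3) + \frac{1}{2}\log(\ell/2) = \frac{7}{12}\log(3) + \frac{5}{6}\log(\ell/2)$, produces the first claimed formula for $\zetahemi'(0)$. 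Inserting instead the second (Dedekind-eta) expression gives the second claimed formula directly, the constant $\frac{1}{4}\log(3)+\frac{1}{2}\log(\ell/2)$ appearing verbatim on the right.

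The verification is essentially algebraic, so no serious obstacle is anticipated. The only point to be careful about is bookkeeping of the logarithms when the constant $\frac{3\ell^2}{16\pi^2}$ is split into prime factors and combined with $\log(2\pi)$; a wrong sign there would spoil the match with Corollary \ref{cor:zetadet_eqtri}. Consistency of the two resulting formulas for $\zetahemi'(0)$ is automatic, since it is inherited from the corresponding consistency already established in the proof of Corollary \ref{cor:zetadet_eqtri}.
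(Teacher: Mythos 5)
Your proposal is correct and follows essentially the same route as the paper: differentiate the relation $\zetahemi(s) = \tfrac{1}{2}\zetaeq(s) - \tfrac{1}{2}\bigl(\tfrac{3\ell^2}{16\pi^2}\bigr)^s\zeta_R(2s)$, evaluate at $s=0$ using $\zeta_R(0)=-\tfrac12$ and $\zeta_R'(0)=-\tfrac{\log(2\pi)}{2}$ to get $\zetahemi'(0) = \tfrac12\zetaeq'(0) + \tfrac14\log(3) + \tfrac12\log(\ell/2)$, and then insert the two expressions from Corollary \ref{cor:zetadet_eqtri}. The logarithmic bookkeeping you flag checks out, so nothing further is needed.
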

\begin{proof}
    By \eqref{eq:zeta_eq_306090_relation}, we have
\begin{equation*}
    \zetahemi'(s) = \frac{1}{2}\zetaeq'(s) - \frac{1}{2}\log\left(\frac{3\ell^2}{16\pi^2}\right)\left(\frac{3\ell^2}{16\pi^2}\right)^s\zeta_R(2s) - \left(\frac{3\ell^2}{16\pi^2}\right)^s\zeta_R'(2s).
\end{equation*}
At $s = 0$, this becomes
\begin{equation*}
    \zetahemi'(0) = \frac{1}{2}\zetaeq'(0) + \frac{1}{4}\log(3) + \frac{1}{2}\log\left(\frac{\ell}{2}\right).
\end{equation*}
To complete the proof we apply Corollary \ref{cor:zetadet_eqtri}.
\end{proof}

\subsection{The heat trace of hemi-equilateral triangles} \label{s:heat_hemi}
For the hemi-equilateral triangle with hypotenuse of length $\ell$, we let 
\[ q = e^{-\frac{16\pi^2}{9\ell^2}t}.\]  
Following \cite{verhoevencan} 
\begin{align*}
    \sum_{m=1}^\infty\sum_{n=1}^\infty q^{m^2+mn+n^2} &= \frac{\sum_{m \in \Z}\sum_{n \in \Z}q^{m^2+mn+n^2} - 3\Theta_3(q) + 2}{6} \\
    &= \frac{\Theta_3(q)\Theta_3(q^3) + \Theta_2(q)\Theta_2(q^3) - 3\Theta_3(q) + 2}{6}, 
\end{align*}
and
\begin{align*}
    \sum_{m=1}^\infty\sum_{n=1}^\infty q^{m^2+mn+n^2} &= 2\sum_{m > n \geq 1} q^{m^2+mn+n^2} + \sum_{m = 1} q^{3m} = 2\heathemid(t) + \frac{\Theta_3(q^3)-1}{2},
\end{align*}
we obtain the heat trace for the Dirichlet boundary condition 
\begin{align*}
    \heathemid(t) &= \sum_{m > n \geq 1} e^{-\frac{16\pi^2}{9\ell^2}(m^2+mn+n^2)t} = \frac{1}{2}\left[\sum_{m=1}\sum_{n=1}q^{m^2+mn+n^2} - \frac{\Theta_3(q^3)-1}{2}\right] \\
    &= \frac{\Theta_3(q)\Theta_3(q^3) + \Theta_2(q)\Theta_2(q^3) - 3\Theta_3(q) - 3\Theta_3(q^2) + 5}{12}.
\end{align*}

We calculate the short time asymptotic expansion of the heat trace, obtaining further terms and a sharp remainder.  
\begin{theorem} \label{th:new_heat_306090}
    The heat trace for the hemi-equilateral triangle with hypotenuse of length $\ell$ with the Dirichlet boundary condition admits the asymptotic expansion
    \begin{align*}
        \heathemid(t) = \frac{\ell^2\sqrt{3}}{32\pi t} - \frac{\ell(3+\sqrt{3})}{16\sqrt{\pi t}} + \frac{5}{12} - \frac{\ell}{8}\sqrt{\frac{3}{\pi t}}e^{-\frac{3\ell^2}{16t}} - \frac{3\ell}{8\sqrt{\pi t}}e^{-\frac{9\ell^2}{16t}} + \mathcal{O}(t^{-1}e^{-\frac{3\ell^2}{4t}}).
    \end{align*}
    The heat trace with the Neumann boundary condition admits the asymptotic expansion
    \begin{align*}
        \heathemin(t) = \frac{\ell^2\sqrt{3}}{32\pi t} + \frac{\ell(3+\sqrt{3})}{16\sqrt{\pi t}} + \frac{5}{12} + \frac{\ell}{8}\sqrt{\frac{3}{\pi t}}e^{-\frac{3\ell^2}{16t}} + \frac{3\ell}{8\sqrt{\pi t}}e^{-\frac{9\ell^2}{16t}} + \mathcal{O}(t^{-1}e^{-\frac{3\ell^2}{4t}}).
    \end{align*}
    The remainders are sharp. 
\end{theorem}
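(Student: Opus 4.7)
The plan is to start from the explicit expression
\[
\heathemid(t) = \frac{\Theta_3(q)\Theta_3(q^3) + \Theta_2(q)\Theta_2(q^3) - 3\Theta_3(q) - 3\Theta_3(q^3) + 5}{12}, \qquad q = e^{-16\pi^2 t/(9\ell^2)},
\]
derived immediately before the theorem statement (noting that the third ``$-3\Theta_3(q^2)$'' in the displayed formula appears to be a typo for $\Theta_3(q^3)$, as can be verified directly from the two lines of derivation above it). I then apply Poisson's summation formula to each of the four theta functions. For $\Theta_3(q)$ and $\Theta_2(q)$ this yields $\frac{3\ell}{4\sqrt{\pi t}}$ times $\sum_{m\in\Z} e^{-9\ell^2 m^2/(16t)}$ and $\sum_{m\in\Z}(-1)^m e^{-9\ell^2 m^2/(16t)}$ respectively; for $\Theta_3(q^3)$ and $\Theta_2(q^3)$ it yields $\frac{\ell}{4}\sqrt{3/(\pi t)}$ times the analogous sums in $e^{-3\ell^2 n^2/(16t)}$. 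This reduces the entire proof to collecting terms.

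Next I would handle the product $\Theta_3(q)\Theta_3(q^3) + \Theta_2(q)\Theta_2(q^3)$, using that the $(-1)^{m+n}$-weighted sum doubles the contribution of parity-even pairs and kills the odd ones:
\[
\Theta_3(q)\Theta_3(q^3) + \Theta_2(q)\Theta_2(q^3) = \frac{3\ell^2\sqrt{3}}{8\pi t} \sum_{\substack{m,n\in\Z \\ m+n \text{ even}}} e^{-(9m^2+3n^2)\ell^2/(16t)}.
\]
After dividing by $12$, the pair $(m,n)=(0,0)$ contributes the leading $\frac{\ell^2\sqrt{3}}{32\pi t}$. The smallest nonzero value of $9m^2+3n^2$ subject to $m+n$ even equals $12$, attained precisely at $(0,\pm 2)$ and $(\pm 1,\pm 1)$, so every remaining term of this double sum is $\mathcal{O}(t^{-1} e^{-3\ell^2/(4t)})$ and absorbed into the claimed remainder. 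The linear contributions $-3\Theta_3(q)$ and $-3\Theta_3(q^3)$, after Poisson, split into a $m=n=0$ piece that combines with the $5/12$ constant to produce the boundary-length coefficient $-\frac{\ell(3+\sqrt{\pi})}{16\sqrt{\pi t}}$ and the invariant $\frac{5}{12}$, plus the $|m|=1$ and $|n|=1$ terms which furnish exactly $-\frac{3\ell}{8\sqrt{\pi t}} e^{-9\ell^2/(16t)}$ and $-\frac{\ell}{8}\sqrt{3/(\pi t)}\, e^{-3\ell^2/(16t)}$. All terms with $|m|,|n|\ge 2$ in these one-dimensional sums are $\mathcal{O}(t^{-1/2} e^{-9\ell^2/(4t)}) = \mathcal{O}(t^{-1} e^{-3\ell^2/(4t)})$, which matches the announced remainder.

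For the Neumann case, I follow the pattern used in the rectangle and equilateral triangle sections: the Neumann eigenvalues run over $m\ge n\ge 0$, giving the identity
\[
\heathemin(t) = \heathemid(t) + \tfrac{1}{2}\bigl(\Theta_3(q) + \Theta_3(q^3)\bigr),
\]
and another application of Poisson summation to the two added one-dimensional theta sums precisely flips the signs of the $t^{-1/2}$ perimeter term and of both explicit exponential terms, while leaving the $\frac{\ell^2\sqrt{3}}{32\pi t}$, the $\frac{5}{12}$, and the remainder order unchanged. For sharpness, the explicit form of the Dirichlet expansion shows that $-\frac{\ell}{8}\sqrt{3/(\pi t)}\, e^{-3\ell^2/(16t)}$ is the slowest-decaying exponential; after it, the next exponential rate appearing in the expansion is $e^{-3\ell^2/(4t)}$, with a strictly nonzero coefficient (from the $(0,\pm 2)$ and $(\pm 1,\pm 1)$ contributions identified above, which do not cancel). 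Since $\frac{3\ell^2}{4} = (\frac{\ell\sqrt 3}{4})^2 \cdot 4$ is the square of half the length $\frac{\ell\sqrt{3}}{2}$ of the altitude from the right angle, which is the shortest closed geodesic of the $30$-$60$-$90$ triangle, this matches the sharp remainder claim.

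The main obstacle is the bookkeeping in the even-parity double sum: one must enumerate the minimising pairs carefully to verify that the threshold exponent is indeed $3\ell^2/(4t)$ and not smaller, and that the coefficient of $e^{-3\ell^2/(4t)}$ is strictly nonzero (so the remainder estimate cannot be improved). Apart from that, the argument is a direct analogue of the proof given for the equilateral triangle in Theorem \ref{th:eqtri_asy_htrace}, applied to the slightly richer combination of theta functions arising from the half-triangle.
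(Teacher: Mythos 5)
Your proposal is essentially the paper's method: the paper proves this theorem by writing $\heateqd(t)=2\heathemid(t)+\sum_{m\ge 1}e^{-16\pi^2m^2t/(3\ell^2)}$ and importing the already-established expansion of Theorem \ref{th:eqtri_asy_htrace}, whereas you re-run the Poisson summation directly on the four theta functions in the closed-form expression for $\heathemid$; the two computations are the same in substance, and your identification of the typo $\Theta_3(q^2)\mapsto\Theta_3(q^3)$ in the displayed formula, your parity reduction of $\Theta_3(q)\Theta_3(q^3)+\Theta_2(q)\Theta_2(q^3)$, and your Neumann identity $\heathemin=\heathemid+\tfrac12(\Theta_3(q)+\Theta_3(q^3))$ are all correct. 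One intermediate claim is wrong, though it does not damage the conclusion: you assert that all terms with $|m|,|n|\ge 2$ in the one-dimensional sums are $\mathcal O(t^{-1/2}e^{-9\ell^2/(4t)})$, but the $|n|=2$ term of $-3\Theta_3(q^3)$ after Poisson summation is $-\tfrac{\ell}{8}\sqrt{3/(\pi t)}\,e^{-3\ell^2\cdot 4/(16t)}=-\tfrac{\ell}{8}\sqrt{3/(\pi t)}\,e^{-3\ell^2/(4t)}$, which decays only at the threshold rate. It is still absorbed by the stated remainder $\mathcal O(t^{-1}e^{-3\ell^2/(4t)})$, and since it is of order $t^{-1/2}$ it cannot cancel the strictly positive $t^{-1}$ coefficient coming from the $(0,\pm2)$ and $(\pm1,\pm1)$ pairs of the even-parity double sum, so your sharpness argument survives once this term is added to the ledger. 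Two further slips to correct: the perimeter coefficient should read $-\tfrac{\ell(3+\sqrt{3})}{16\sqrt{\pi t}}$ (you wrote $\sqrt{\pi}$ for $\sqrt{3}$), and the geodesic remark at the end conflates the exponent $3\ell^2/4$ of the five-term remainder with the exponent $3\ell^2/16=(L/2)^2$, $L=\ell\sqrt3/2$, that governs the three-term remainder and is the quantity the paper ties to the shortest closed geodesic.
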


\begin{proof}
We have    
\begin{align*}
    \heathemid(t) = \sum_{m > n \geq 1} e^{-\frac{16\pi^2}{9\ell^2}(m^2+mn+n^2)t}.
\end{align*}
Let $\heateqd(t)$ denote the heat trace of an equilateral triangle with side length $\ell$. Since
\begin{equation*}
    \heateqd(t) = \sum_{m = 1}^\infty\sum_{n = 1}^\infty e^{-\frac{16\pi^2}{9\ell^2}(m^2+mn+n^2)t} = 2\heathemid(t) + \sum_{m = 1}^\infty e^{-\frac{16\pi^2}{3\ell^2}m^2t}
\end{equation*}
and
\begin{equation*}
    \sum_{m = 1}^\infty e^{-\frac{16\pi^2}{3\ell^2}m^2t} = \frac{1}{2}\left(\frac{\ell}{4}\sqrt{\frac{3}{\pi t}} - 1\right) + \frac{\ell}{4}\sqrt{\frac{3}{\pi t}}\sum_{m=1}^\infty e^{-\frac{3\ell^2m^2}{16t}}
\end{equation*}
by Poisson's summation formula \cite[Lemma 2.2.2]{verhoevencan}, 
\begin{align*}
    \heathemid(t) = \frac{1}{2}\left[\heateqd(t) - \frac{1}{2}\left(\frac{\ell}{4}\sqrt{\frac{3}{\pi t}} - 1\right) - \frac{\ell}{4}\sqrt{\frac{3}{\pi t}}\sum_{m=1}^\infty e^{-\frac{3\ell^2m^2}{16t}}\right].
\end{align*}
By Theorem \ref{th:eqtri_asy_htrace},
\begin{align*}
    \heathemid(t) &= \frac{1}{2}\bigg[\frac{\ell^2\sqrt{3}}{16\pi t} - \frac{3\ell}{8\sqrt{\pi t}} + \frac{1}{3} + \frac{\ell^2\sqrt{3}}{8\pi t}\sum_{m=1}^\infty e^{-\frac{9\ell^2m^2}{4t}} + \frac{\ell^2\sqrt{3}}{8\pi t}\sum_{n=1}^\infty e^{-\frac{3\ell^2n^2}{4t}} - \frac{3\ell}{4\sqrt{\pi t}}\sum_{m=1}^\infty e^{-\frac{9\ell^2m^2}{16t}}\\
    &+ \frac{\ell^2\sqrt{3}}{4\pi t}\sum_{m=1}^\infty\sum_{n=1}^\infty e^{-\frac{3\ell^2(3m^2+n^2)}{4t}} + \frac{\ell^2\sqrt{3}}{4\pi t}\sum_{m=1}^\infty\sum_{n=1}^\infty e^{-\frac{3\ell^2(3(2m-1)^2+(2n-1)^2)}{16t}} \\ &- \frac{1}{2}\left(\frac{\ell}{4}\sqrt{\frac{3}{\pi t}} - 1\right) - \frac{\ell}{4}\sqrt{\frac{3}{\pi t}}\sum_{m=1}^\infty e^{-\frac{3\ell^2m^2}{16t}}\bigg].
\end{align*}
This simplifies to
\begin{align*}
    \heathemid(t) &= \frac{\ell^2\sqrt{3}}{32\pi t} - \frac{\ell(3+\sqrt{3})}{16\sqrt{\pi t}} + \frac{5}{12} - \frac{\ell}{8}\sqrt{\frac{3}{\pi t}}\sum_{m=1}^\infty e^{-\frac{3\ell^2m^2}{16t}} - \frac{3\ell}{8\sqrt{\pi t}}\sum_{m=1}^\infty e^{-\frac{9\ell^2m^2}{16t}} +\frac{\ell^2\sqrt{3}}{16\pi t}\sum_{n=1}^\infty e^{-\frac{3\ell^2n^2}{4t}}  \\
    &+ \frac{\ell^2\sqrt{3}}{16\pi t}\sum_{m=1}^\infty e^{-\frac{9\ell^2m^2}{4t}} + \frac{\ell^2\sqrt{3}}{8\pi t}\sum_{m=1}^\infty\sum_{n=1}^\infty e^{-\frac{3\ell^2(3m^2+n^2)}{4t}} + \frac{\ell^2\sqrt{3}}{8\pi t}\sum_{m=1}^\infty\sum_{n=1}^\infty e^{-\frac{3\ell^2(3(2m-1)^2+(2n-1)^2)}{16t}}.
\end{align*}
The eigenvalues of the hemi-equilateral triangle with hypotenuse of length $\ell$ and Neumann boundary condition are
\begin{equation*}
    \lambda_{m,n} = \frac{16\pi^2}{9\ell^2}(m^2+mn+n^2), \,\, m \geq n \geq 0,
\end{equation*}
so the heat trace becomes
\begin{align*}
    \heathemin(t) = \sum_{m \geq n \geq 0} e^{-\frac{16\pi^2}{9\ell^2}(m^2+mn+n^2)t} = 1 + \sum_{m=1}^\infty e^{-\frac{16\pi^2}{9\ell^2}m^2t} + \sum_{n=1}^\infty e^{-\frac{16\pi^2}{3\ell^2}n^2t} + \heathemid(t).
\end{align*}
We have
\begin{align*}
    \sum_{m=1}^\infty e^{-\frac{16\pi^2}{9\ell^2}m^2t} &= \frac{1}{2}\left(\frac{3\ell}{4\sqrt{\pi t}} - 1\right) + \frac{3\ell}{4\sqrt{\pi t}} \sum_{m=1}^\infty e^{-\frac{9\ell^2m^2}{16t}}, \\
    \sum_{n=1}^\infty e^{-\frac{16\pi^2}{3\ell^2}n^2t} &= \frac{1}{2}\left(\frac{\ell}{4}\sqrt{\frac{3}{\pi t}} - 1\right) + \frac{\ell}{4}\sqrt{\frac{3}{\pi t}} \sum_{n=1}^\infty e^{-\frac{3\ell^2n^2}{16t}},
\end{align*}
which gives
\begin{align*}
    \heathemin(t) &= \frac{\ell^2\sqrt{3}}{32\pi t} + \frac{\ell(3+\sqrt{3})}{16\sqrt{\pi t}} + \frac{5}{12} + \frac{\ell}{8}\sqrt{\frac{3}{\pi t}}\sum_{m=1}^\infty e^{-\frac{3\ell^2m^2}{16t}} + \frac{3\ell}{8\sqrt{\pi t}}\sum_{m=1}^\infty e^{-\frac{9\ell^2m^2}{16t}} +\frac{\ell^2\sqrt{3}}{16\pi t}\sum_{n=1}^\infty e^{-\frac{3\ell^2n^2}{4t}}  \\
    &+ \frac{\ell^2\sqrt{3}}{16\pi t}\sum_{m=1}^\infty e^{-\frac{9\ell^2m^2}{4t}} + \frac{\ell^2\sqrt{3}}{8\pi t}\sum_{m=1}^\infty\sum_{n=1}^\infty e^{-\frac{3\ell^2(3m^2+n^2)}{4t}} + \frac{\ell^2\sqrt{3}}{8\pi t}\sum_{m=1}^\infty\sum_{n=1}^\infty e^{-\frac{3\ell^2(3(2m-1)^2+(2n-1)^2)}{16t}}.
\end{align*}
\end{proof}
It follows from Theorem \ref{th:new_heat_306090} that
\begin{align*}
    \heathemid(t) = \frac{\ell^2\sqrt{3}}{32\pi t} - \frac{\ell(3+\sqrt{3})}{16\sqrt{\pi t}} + \frac{5}{12} + \mathcal{O}(e^{-\frac{3\ell^2 - \epsilon}{16t}}), \,\, t \to 0, \\
    \heathemin(t) = \frac{\ell^2\sqrt{3}}{32\pi t} + \frac{\ell(3+\sqrt{3})}{16\sqrt{\pi t}} + \frac{5}{12} + \mathcal{O}(e^{-\frac{3\ell^2 - \epsilon}{16t}}), \,\, t \to 0,
\end{align*}
for any $\epsilon > 0$. Once again, $3\ell^2/16$ is the square of half the length of the shortest closed geodesic in the hemi-equilateral triangle with hypotenuse of length $\ell$ as proven in  \cite[p. 43]{durso1988inverse}.

%%%%%%%%%%%%%%%%%%%%%%%%%%%
\section{Heat traces of flat tori, convex polygonal domains and a comparison with smoothly bounded domains} \label{s:comparison}
A full-rank lattice $\Gamma \subset \R^n$ is a discrete additive subgroup of the additive group $(\R^n, +)$.  In fact, every discrete additive subgroup of $\R^n$ is a lattice, albeit not necessarily full-rank.  A full-rank lattice $\Gamma \subset \R^n$ gives rise to a smooth, compact Riemannian manifold known as a flat torus, obtained as the quotient $\R^n/\Gamma$. Its Riemannian metric is inherited from the Euclidean (flat) metric on $\R^n$.  The eigenvalues of the Laplacian on the flat torus $\R^n/\Gamma$ are the values $4 \pi^2 ||y||^2$ for all $y$ in the dual lattice $\Gamma^*$, with multiplicities counted according to how many distinct $y$ have the same length; recall 
\[ \Gamma^* = \{ y \in \R^n : y \cdot x \in \Z, \quad \forall x \in \Gamma \}.\] 
The Poisson summation formula is the relation 
\[ \sum_{\gamma \in \Gamma^*} e^{-4\pi t ||\gamma^*||^2} = \frac{\vol(\R^n/\Gamma)}{(4\pi t)^{n/2}} \sum_{\gamma \in \Gamma} e^{-||\gamma||^2/(4t)}. \]
We recognize the left side as the heat trace of the flat torus.  Thus, we have the asymptotic expansion  
\[ \sum_{k \geq 0} e^{-\lambda_k t} = \frac{\vol(\R^n/\Gamma)}{(4\pi t)^{n/2}} \left( 1 + m(\gamma_1) e^{-||\gamma_1||^2/(4t)} + \mathcal O (e^{-||\gamma_2||^2/(4t)}) \right), \quad t \to 0. \]
Above, $\{\lambda_k\}_{k \geq 0}$ are the eigenvalues of the flat torus, $m(\gamma_1)$ is the number of $\gamma \in \Gamma$ of minimal positive length given by $||\gamma_1||$, with the next shortest length given by $||\gamma_2||$.  We then observe that the shortest closed geodesic in $\R^n/\Gamma$ has length $||\gamma_1||$.  Consequently, the asymptotic expansion of the heat trace consists of the usual leading term, together with a remainder term that is of the form $\mathcal O (t^{-n/2} e^{-L^2/(4t)})$ with $L$ the length of the shortest closed geodesic in the flat torus.  This leads us to make a conjecture about the short time asymptotic expansion of the heat trace in similarly flat settings.  

A compact Riemannian manifold with curvature identically equal to zero is known as a Euclidean space form.  The fundamental groups of compact Euclidean space forms are examples of crystallographic groups.  These are discrete groups of Euclidean isometries with compact quotients.  It is interesting to note that in two dimensions, the fundamental domains of crystallographic groups are precisely the integrable polygonal domains of this study.  In two dimensions, all space forms are diffeomorphic to either a flat torus or a Klein bottle. There are 10 diffeomorphism classes of compact 3-dimensional Euclidean space forms, and 75 classes in dimension 4.  Every Euclidean space form is a quotient of a flat torus by a finite group of isometries, and in each dimension there are only finitely many diffeomorphism classes of Euclidean space forms, although the complete classification is known only in low dimensions. We refer to \cite{wolf1972spaces} and \cite{lee2018introduction} for further details about Euclidean space forms.  Due to the vanishing of their curvature, similar to the case of flat tori, we reasonably expect their heat traces to have a similar form. 

\begin{conjecture} \label{conj:spaceform}
Assume that $M$ is an $n$-dimensional Euclidean space form.  Then its heat trace admits an asymptotic expansion of the form 
\[ \sum_{k \geq 0} e^{-\lambda_k t} = t^{-n/2} \left( \frac{\vol(M)}{(4\pi t)^{n/2}} + \mathcal O (e^{-L^2/(4t)}) \right), \quad t \to 0.\]
Here, $L$ is the length of the shortest closed geodesic in $M$. 
\end{conjecture}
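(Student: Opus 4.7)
The plan is to establish the conjecture by the method of images, which applies because every Euclidean space form is isometric to $\R^n/\Gamma$ for a crystallographic group $\Gamma \subset \mathrm{Isom}(\R^n)$. Writing the heat kernel on $M$ as
\[ K_M(t,x,x) = \frac{1}{(4\pi t)^{n/2}} \sum_{\gamma \in \Gamma} e^{-|\gamma x - x|^2/(4t)} \]
and integrating over a fundamental domain of volume $\vol(M)$, the identity element contributes exactly the leading term $\vol(M)/(4\pi t)^{n/2}$, so the conjecture reduces to bounding
\[ R(t) = \frac{1}{(4\pi t)^{n/2}} \sum_{\gamma \neq e} \int_M e^{-|\gamma x - x|^2/(4t)} \, dx. \]

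The geometric input I would use is that for each non-trivial $\gamma \in \Gamma$, the displacement function $x \mapsto |\gamma x - x|$ attains on $\R^n$ a positive minimum $L_\gamma$, and $L_\gamma$ equals the length of the closed geodesic in $M$ corresponding to the conjugacy class of $\gamma$. Hence $L_\gamma \geq L$ for every such $\gamma$, and one has the pointwise lower bound $|\gamma x - x|^2 \geq L_\gamma^2$ on $M$.

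To control the sum, I would decompose $\Gamma = \bigsqcup_{f \in F} \gamma_f T$, where $T$ is the normal translation subgroup (a full-rank lattice in $\R^n$) and $F = \Gamma/T$ is the finite point group. For $\gamma \in T \setminus \{0\}$, the integrand is constant in $x$ and the sum is a lattice theta series whose small-$t$ tail admits the estimate $\mathcal{O}(e^{-(L_T^2-\epsilon)/(4t)})$ for any $\epsilon > 0$, by standard lattice-point counting. For each non-translation coset, writing $\gamma_f \tau_v$ with linear part $A_f$, the quadratic form $v \mapsto |(A_f - I)x + v|^2$ is positive semidefinite in $v$ on the image of $A_f - I$; Gaussian summation in $v$ then yields an exponential estimate in terms of $L_{\gamma_f}^2 \geq L^2$, with at most a polynomial factor in $t^{-1}$. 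Assembling the finitely many cosets gives a total remainder of the stated form, with the polynomial corrections absorbed into the overall $t^{-n/2}$ prefactor.

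The main obstacle will be the cosets whose linear part $A_f$ has a non-trivial fixed subspace, because then the quadratic form degenerates along the fixed directions and direct Gaussian integration provides only partial exponential decay. My plan to address this is to split the fundamental domain into a tubular neighborhood of the $\gamma_f$-invariant affine subspace and its complement: on the tube the geometric lower bound $|\gamma_f x - x| \geq L_{\gamma_f}$ is applied together with the bounded tube volume, while on the complement full Gaussian decay in the transverse directions is available. This mirrors the handling of elliptic-plus-translation conjugacy classes in the Selberg-style trace formula for flat orbifolds, and I expect it to yield the uniform exponential remainder $\mathcal{O}(t^{-n/2} e^{-L^2/(4t)})$ predicted by the conjecture.
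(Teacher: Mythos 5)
The paper does not prove this statement: it is Conjecture \ref{conj:spaceform} and is left open, so there is no proof of record to compare against. Your outline is the natural attack (the classical method-of-images/trace-formula computation for Bieberbach manifolds) and, carried out carefully, it does prove the intended statement. Two framing remarks first. The displayed formula in the conjecture has a spurious extra $t^{-n/2}$ multiplying the volume term; your computation correctly produces the Weyl leading term $\vol(M)/(4\pi t)^{n/2}$ plus a remainder $\mathcal O(t^{-n/2}e^{-L^2/(4t)})$, which is clearly what is meant (compare the flat-torus display earlier in the same section). Also, make explicit that $\Gamma$ is \emph{torsion-free} (Bieberbach), not merely crystallographic: torsion-freeness is doing real work both in validating the unmodified image sum for the heat kernel (no fixed points, hence no orbifold corrections) and in guaranteeing $L_\gamma=\min_x|\gamma x-x|>0$ for every $\gamma\neq e$, since an isometry with zero minimal displacement attains it and so has a fixed point.

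The one genuine gap is in the treatment of the non-translation cosets, where your ``Gaussian summation in $v$'' is asserted without the structural fact that makes it legitimate. Writing $\gamma=(A,v_0+v)$ with $v\in T$, $V_0=\ker(A-I)$ and $V_1=\mathrm{Im}(A-I)=V_0^{\perp}$, the displacement splits orthogonally as
\[ |\gamma x - x|^2 = |P_{V_0}(v_0+v)|^2 + |(A-I)x + P_{V_1}(v_0+v)|^2, \]
and the first summand is independent of $x$ and equals $L_{(A,v_0+v)}^2\geq L^2$. To conclude you need (i) the set $\{P_{V_0}(v_0+v):v\in T\}$ to be a \emph{discrete} translate of a lattice in $V_0$, so that its theta sum is $\mathcal O(e^{-L^2/(4t)})$ with no loss, and (ii) the transverse sums over each fiber of $P_{V_0}|_T$ (cosets of the full-rank lattice $T\cap V_1$) to be uniformly $\mathcal O(1)$. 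Both follow from the fact that $A$ preserves $T$ (normality of the translation subgroup), which makes $V_0$ and $V_1$ rational subspaces for $T$; your proposal never invokes this, and without it the minimal displacements within a coset could a priori accumulate and the degenerate-direction sum would not be controlled. Once this is in place, the tube decomposition you propose is unnecessary -- the orthogonal splitting above already isolates the degenerate directions -- and each coset contributes at worst $\mathcal O(t^{-n/2}e^{-L^2/(4t)})$, with no polynomial losses to ``absorb.'' Finally, note that the lattice theta estimate for the translation coset holds with the exact exponent $e^{-L_T^2/(4t)}$ for $t$ bounded (bound each term by $e^{-r_1^2/(4t)}e^{-(|v|^2-r_1^2)/(4t_0)}$ and sum), so the $\epsilon$-loss you allow there is avoidable and the conjecture's exact exponent is attainable.
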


In higher dimensions, strictly tessellating polytopes as defined in 
\cite[Definition 1]{rowlett2021crystallographic} are analogous to integrable polygons in dimension two.  Indeed, one could reasonably define an integrable polytope to be a strictly tessellating polytope in the sense of \cite{rowlett2021crystallographic}.  Heuristically, our definition of a polytope is a bounded domain in Euclidean space such that its boundary is piecewise smooth and consists of flat boundary faces.  In two dimensions, for example, a polytope is a bounded, connected polygonal domain.  We suggest that it is reasonable that all polytopes admit a heat trace expansion that behaves analogously to the two-dimensional case.

\begin{conjecture} \label{conj:polytope}
Assume that $M$ is a polytope in $\R^n$.  Then its heat trace with either the Dirichlet or Neumann boundary condition admits an asymptotic expansion of the form 
\[ \sum_{k \geq 0} e^{-\lambda_k t} = t^{-n/2} \left( \sum_{j=0} ^n a_j t^{j/2} + \mathcal O (e^{-c/t}) \right), \quad t \to 0.\]
The coefficient $a_0 = (4\pi)^{-n/2} \vol(M)$ with $\vol(M)$ the $n$-dimensional (Lebesgue) volume of the polytope.  The coefficient $a_1$ can be expressed with a universal constant together with the total $(n-1)$-dimensional volume of the boundary faces of the polytope.  Analogously, the coefficients $a_j$ for $2 \leq j \leq n-1$ can be expressed with a universal constant together with the total $(n-j)$-dimensional volume of the $(n-j)$-dimensional intersections of the boundary faces.  The coefficient $a_n$ can be expressed in terms of the angles in the polytope and its boundary faces as well as angles between the intersections of these.  The infinimum over all $c>0$ such that this remainder estimate holds is $L^2/4$ with $L$ the length of the shortest closed geodesic in $M$.  
\end{conjecture}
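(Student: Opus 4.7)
The plan is to prove Conjecture \ref{conj:polytope} by combining a stratified short-time parametrix construction, explicit model heat kernels on local Euclidean cones, and a diffractive closed-geodesic trace formula that identifies the sharp exponential decay of the remainder. Let $M \subset \R^n$ be a polytope stratified as $M = \bigsqcup_{k=0}^{n} S_k$, where $S_k$ is the union of the relative interiors of its $k$-dimensional faces: $S_n$ is the open interior, $S_{n-1}$ the union of open facets, and $S_0$ the set of vertices. Near each point of $S_k$ the polytope is locally isometric to $\R^k \times C_{n-k}$, where $C_{n-k}$ is an open Euclidean cone of dimension $n-k$ whose cross-section is a spherical polytope encoding the local angular data; this provides a finite family of model domains whose heat kernels are computable and, crucially, homogeneous in a way compatible with the global rescaling $t \mapsto \lambda^2 t$.

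First I would build a parametrix $\widetilde K(t,x,y) = \sum_{k=0}^{n} \chi_k(x)\, H_{C_{n-k}}(t,x,y)\, \chi_k(y)$, where $\chi_k$ is a cutoff supported in a tubular neighborhood of $S_k$ and $H_{C_{n-k}}$ is the model-cone heat kernel with the prescribed boundary condition. Standard heat-equation estimates show that the error $(\pa_t - \Delta)\widetilde K$ is supported at positive distance from every stratum, so by Duhamel the true heat trace and the trace of $\widetilde K$ agree up to $\mathcal O(e^{-c/t})$ for some $c>0$. Taking the trace stratum by stratum and invoking Fubini yields a contribution from $S_k$ proportional to $\vol_k(S_k)$ times a dimensionless angular factor $\alpha_k$ obtained from $\mathrm{tr}\, H_{C_{n-k}}$ near the cone tip. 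This produces exactly the coefficients of the conjectured expansion: $a_0 = (4\pi)^{-n/2}\vol(M)$ from $S_n$, a universal multiple of the total facet volume from $S_{n-1}$ (with sign determined by the boundary condition), sums of $(n-j)$-dimensional face volumes for each intermediate $a_j$, and the vertex-cone heat expansions for the dimensionless top coefficient $a_n$, in direct analogy with Theorem \ref{th:shortest}.

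The main obstacle, and the technical heart of the conjecture, is the sharp remainder $\mathcal O(e^{-c/t})$ with $\sup\{c\} = L^2/4$. For strictly tessellating polytopes in the sense of \cite{rowlett2021crystallographic}, the method of images applies globally: the heat kernel on $M$ is a sum of Euclidean heat kernels over a crystallographic orbit, Poisson summation converts the trace into a lattice sum over the dual, and the remainder is controlled by the shortest nonzero orbit vector, whose length is exactly $L$. For general polytopes, whose dihedral and higher-codimension angles need not be of the form $\pi/m$, the method of images fails at irrational strata and one must adapt a diffractive trace formula, in the spirit of Hillairet's analysis on Euclidean surfaces with conical singularities, to the higher-codimension stratified setting. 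After subtracting the local contributions of each $S_k$, the remainder is expressed as a sum over closed reflected/diffracted billiard trajectories, each contributing $\mathcal O(t^{-N} e^{-\ell^2/(4t)})$ where $\ell$ is its length. Matching the infimum of admissible exponents to $L^2/4$ then requires both an upper bound (no trajectory shorter than $L$ contributes) and a lower bound (the minimal closed orbit of length $L$ actually appears in the expansion with nonzero amplitude). Establishing this diffractive trace formula uniformly across all codimensions, and verifying that the shortest closed broken geodesic in a convex polytope realizes the sharp exponent, is where the principal new analytic work will be concentrated.
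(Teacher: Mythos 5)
The statement you are addressing is posed in the paper as a \emph{conjecture}: the authors give no proof, only heuristic motivation via locality principles (citing \cite{nursultanov2019heat} and Kac's principle of not feeling the boundary) and the analogy with the explicit Poisson-summation computations for flat tori and for the four integrable polygons in Theorem \ref{th:shortest}. So there is no proof in the paper to compare yours against, and what you have written is, by your own admission, a research program rather than a proof. Your outline of the locality/parametrix step is consistent with how the authors motivate the coefficients $a_j$ for $0 \leq j \leq n-1$, and the method-of-images argument for strictly tessellating polytopes is essentially how the paper actually handles the integrable cases in dimension two. That part is sound in spirit.

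However, the proposal does not close the gap that makes this a conjecture. Two essential steps are named but not carried out. First, the glued parametrix argument gives a remainder $\mathcal{O}(e^{-c/t})$ for \emph{some} $c>0$ depending on the distances between the cutoff supports and the strata; nothing in that construction ties $c$ to $L^2/4$, and improving the constant to the sharp value requires the full diffractive trace formula you invoke, which is not established in codimension greater than one (Hillairet's work concerns surfaces with isolated conical singularities, and the higher-codimension edge and corner strata of a polytope are genuinely harder). Second, even granting such a trace formula, the sharpness claim needs the amplitude of the length-$L$ orbit (or the sum of amplitudes over all orbits of length $L$) to be nonzero; in the integrable cases the paper verifies this by explicit computation (e.g.\ the term $\frac{ab}{2\pi t}e^{-\min(a,b)^2/t}$ for the rectangle), but for a general polytope cancellation has not been ruled out. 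Note also that your final step restricts to convex polytopes, whereas the conjecture is stated for arbitrary polytopes, where the shortest closed geodesic may be a diffractive orbit passing through a non-convex edge or vertex, a case your billiard-trajectory sum would need to accommodate. As it stands, your text is a plausible plan of attack that correctly identifies where the difficulty lies, but it does not prove the conjecture.
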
 

The coefficients $a_j$ for $0 \leq j \leq n-1$ are motivated by locality principles \cite{nursultanov2019heat} that generalize Kac's principle of not feeling the boundary \cite[p. 9]{kac1966can}.  The idea is that on the interior of each $(n-j)$-dimensional subset of the boundary, away from its edges, the heat kernel in $M$ can be modelled after the heat kernel in $\R^{n-j}$.  The leading term in the heat trace then comes simply from the $(n-j)$-dimensional volumes of these subsets, together with certain universal constants.

\subsection{A comparison of heat trace invariants of smoothly bounded domains and polygonal domains} \label{ss:comparison}
We conclude with a comparison of the heat trace expansion of smoothly bounded planar domains to that of polygonal domains that need not be integrable.  We therefore recall the short time asymptotic expansion of the heat trace in these contexts.  
\begin{proposition} \label{prop:heat_trace_expansion_of_smoothly_bounded_domains_and_polygons}
    Let $\Omega \subset \R^2$ be a smoothly bounded domain. For Dirichlet boundary conditions, the heat trace of $\Omega$ satisfies
    \begin{equation*}
        h(t) \sim \frac{a_{-1}}{t} + \frac{a_{-1/2}}{\sqrt{t}} + a_0 + a_{1/2}\sqrt{t}, \,\, t \to 0,
    \end{equation*}
    where
    \begin{equation} \label{eq:heat_trace_coefficients_smoothly_bounded_and_polygons}
        a_{-1} = \frac{|\Omega|}{4\pi}, \quad 
        a_{-1/2} = -\frac{|\partial\Omega|}{8\sqrt{\pi}}, \quad
        a_{0} = \frac{1}{12\pi}\int_{\partial\Omega} k(s) ds, \quad
        a_{1/2} = \frac{1}{256\sqrt{\pi}}\int_{\partial\Omega} k(s)^2 ds, 
\end{equation}
    with $k(s)$ being the Gauss curvature of the boundary. If in addition $\Omega$ is convex, then $a_0 = 1/6$. If instead $\Omega$ is a convex $n$-sided polygon with interior angles $\gamma_1$,\dots,$\gamma_n$, then
    \begin{equation} \label{eq:third_heat_trace_coefficient_polygon}
        a_0 = \sum_{i=1}^n \frac{\pi^2 - \gamma_i^2}{24\pi\gamma_i}.
    \end{equation}
\end{proposition}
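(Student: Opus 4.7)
The plan is to assemble the statement from classical heat-kernel asymptotics, treating the smoothly bounded and polygonal cases by the same parametrix philosophy but with different local models. For a smoothly bounded $\Omega \subset \R^2$ with Dirichlet boundary conditions, I would first establish the interior contribution using the Minakshisundaram--Pleijel parametrix: away from $\partial\Omega$ the heat kernel is to leading order the free Euclidean kernel $(4\pi t)^{-1}e^{-|x-y|^2/(4t)}$, whose on-diagonal value integrates over $\Omega$ to give $a_{-1} = |\Omega|/(4\pi)$. For the boundary coefficient, I would work in Fermi coordinates $(s,r)$ in a tubular neighborhood of $\partial\Omega$ and model the Dirichlet kernel by its half-space version $\frac{1}{4\pi t}(e^{-|x-y|^2/4t}-e^{-|x-\bar y|^2/4t})$, where $\bar y$ is the reflection of $y$. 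Integrating the diagonal in $r$ and then in $s$ produces the factor $-1/(8\sqrt{\pi})$ per unit length, giving $a_{-1/2} = -|\partial\Omega|/(8\sqrt{\pi})$.

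For the curvature coefficient, the next order of the boundary parametrix in Fermi coordinates picks up terms proportional to the geodesic curvature $k(s)$; the computation carried out by McKean--Singer and generalized in Gilkey's book yields $a_0 = \frac{1}{12\pi}\int_{\partial\Omega} k(s)\,ds$ and, one order further, $a_{1/2} = \frac{1}{256\sqrt{\pi}}\int_{\partial\Omega} k(s)^2\,ds$. When $\Omega$ is convex and smoothly bounded, the planar Gauss--Bonnet theorem gives $\int_{\partial\Omega} k(s)\,ds = 2\pi\chi(\Omega) = 2\pi$, so $a_0 = 1/6$ in this case.

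For convex polygonal $\Omega$, the same localization argument shows that $a_{-1}$ and $a_{-1/2}$ are unchanged, since the local analysis on the flat boundary edges (away from vertices) produces identical contributions; curvature of a flat edge vanishes, so no corresponding $a_0$ contribution arises from the edges. The new phenomenon is that the curvature contribution concentrates at the vertices. Near a corner of interior angle $\gamma_i$, the Dirichlet heat kernel is modelled by the heat kernel of an infinite Euclidean wedge of opening $\gamma_i$, whose explicit form (via Carslaw's formula or the Kontorovich--Lebedev transform) yields, after integrating the diagonal and subtracting the half-plane contribution already absorbed into $a_{-1/2}$, a local contribution of $(\pi^2-\gamma_i^2)/(24\pi\gamma_i)$; summing over vertices gives \eqref{eq:third_heat_trace_coefficient_polygon}.

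The main obstacle is not the local model calculations but the rigorous matching of the formal parametrix to the true heat trace to the required order, together with a quantitative error estimate that places the remainder strictly beyond $a_0$ as $t \to 0$. In the polygonal setting this is precisely the content of the theorem of van den Berg and Srisatkunarajah \cite{van1988heat}, from which I would cite the sharp remainder; in the smoothly bounded setting the analogous justification is classical (see Gilkey). Assembling these citations completes the proof without further computation.
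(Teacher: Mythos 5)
Your proposal is correct and in essence the same as the paper's proof, which simply cites the literature (Watanabe for the smooth-boundary coefficients and Nursultanov--Rowlett--Sher for $a_0=\chi(\Omega)/6$ and the corner formula); you additionally sketch the standard parametrix/reflection/wedge-kernel derivation underlying those citations and defer the rigorous remainder matching to Gilkey and van den Berg--Srisatkunarajah, which is a legitimate alternative set of references for the same facts. All the constants and the Gauss--Bonnet step ($\int_{\partial\Omega}k\,ds=2\pi\chi(\Omega)$, hence $a_0=1/6$ for convex smooth domains) check out.
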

\begin{proof}
     The formulas given by \eqref{eq:heat_trace_coefficients_smoothly_bounded_and_polygons} can be found in \cite{watanabe2002plane}. Moreover, by \cite[Thm. 6.10, Remark 6.15]{nursultanov2019heat} we have $a_0 = \frac{\chi(\Omega)}{6}$, which equals $1/6$ if $\Omega$ is convex. Finally, \eqref{eq:third_heat_trace_coefficient_polygon} follows from \cite[Thm. 6.10]{nursultanov2019heat}.
\end{proof}

As a consequence, we will see that the first two heat trace coefficients of a sequence of smoothly bounded convex domains that converge to a convex polygonal domain converge to that of the polygonal domain.  However, the third heat trace coefficient does \em not \em converge to that of the polygonal domain.

\begin{theorem}
    Let $\{\Omega_k\}$ be a sequence of convex smoothly bounded domains in $\R^2$ and let $\Omega$ be a convex polygon such that $\Omega_k \to \Omega$ in the Hausdorff distance. For Dirichlet boundary conditions, heat trace coefficients satisfy 
    \[ a_j (\Omega_k) \to a_j (\Omega), \quad j=-1, -1/2, \quad a_0(\Omega_k) \not\to a_0 (\Omega).\]   
\end{theorem}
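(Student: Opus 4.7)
The plan is to combine Proposition \ref{prop:heat_trace_expansion_of_smoothly_bounded_domains_and_polygons} with standard continuity properties of area and perimeter under Hausdorff convergence of convex bodies, and then to exhibit a strict inequality showing that the third coefficient cannot converge. For the first two coefficients I would appeal to the classical fact from convex geometry that if $K_k \to K$ in the Hausdorff metric with all $K_k$ (and hence $K$) convex, then $|K_k| \to |K|$ and $|\partial K_k| \to |\partial K|$; this is the continuity of the intrinsic volumes $V_2$ and $V_1$ on the space of convex bodies. Since $a_{-1} = |\Omega|/(4\pi)$ and $a_{-1/2} = -|\partial\Omega|/(8\sqrt{\pi})$ in both the smooth and polygonal cases by Proposition \ref{prop:heat_trace_expansion_of_smoothly_bounded_domains_and_polygons}, these convergences transfer immediately to the first two heat trace coefficients.

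The heart of the argument is the third coefficient. By Proposition \ref{prop:heat_trace_expansion_of_smoothly_bounded_domains_and_polygons}, for each convex smoothly bounded $\Omega_k$ one has $a_0(\Omega_k) = 1/6$, so the sequence $\{a_0(\Omega_k)\}$ is constantly $1/6$. It therefore suffices to prove the strict inequality $a_0(\Omega) \neq 1/6$. For a convex $n$-gon with interior angles $\gamma_1,\dots,\gamma_n$, Proposition \ref{prop:heat_trace_expansion_of_smoothly_bounded_domains_and_polygons} gives
\[
a_0(\Omega) = \sum_{i=1}^n \frac{\pi^2-\gamma_i^2}{24\pi\gamma_i} = \frac{\pi}{24}\sum_{i=1}^n \frac{1}{\gamma_i} - \frac{1}{24\pi}\sum_{i=1}^n \gamma_i.
\]
Using the angle-sum identity $\sum_i \gamma_i = (n-2)\pi$ and the AM--HM inequality applied to the positive numbers $\gamma_1,\dots,\gamma_n$, one has $\sum_i 1/\gamma_i \geq n^2/((n-2)\pi)$, with equality iff all angles coincide. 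Substituting and simplifying yields
\[
a_0(\Omega) \geq \frac{n^2}{24(n-2)} - \frac{n-2}{24} = \frac{n^2 - (n-2)^2}{24(n-2)} = \frac{n-1}{6(n-2)} > \frac{1}{6},
\]
for every $n \geq 3$, since $n-1 > n-2$. Hence $a_0(\Omega_k) \to 1/6 < a_0(\Omega)$, completing the proof.

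There is no real obstacle: the whole argument is a bookkeeping exercise once one recalls the continuity of area and perimeter under Hausdorff convergence of convex sets and carries out the elementary AM--HM estimate above. It is worth emphasizing, however, that the failure of convergence is structural rather than accidental: Gauss--Bonnet forces $\int_{\partial\Omega_k} k(s)\, ds = 2\pi$ for every smooth convex $\Omega_k$, so the third heat trace invariant simply cannot detect the angular deficits concentrating at the vertices in the Hausdorff limit. The quantitative gap $a_0(\Omega) - a_0(\Omega_k) \geq 1/(6(n-2))$ depends only on the number of vertices of the limiting polygon.
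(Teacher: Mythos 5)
Your proof is correct and follows essentially the same route as the paper: continuity of area and perimeter for convex bodies under Hausdorff convergence handles $a_{-1}$ and $a_{-1/2}$, while the constant value $a_0(\Omega_k)=1/6$ is compared against the lower bound $a_0(\Omega)\geq \frac{1}{6}+\frac{1}{6(n-2)}>\frac16$ obtained from the angle-sum identity and the Cauchy--Schwarz/AM--HM inequality (the paper phrases the same estimate via Cauchy--Schwarz). Your closing remark relating the obstruction to Gauss--Bonnet is a nice additional observation but not needed for the argument.
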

\begin{proof}

With the assumptions of convexity and Hausdorff convergence, it follows that the areas $|\Omega_k|$ and perimeters $|\pa \Omega_k|$ converge to the $|\Omega|$ and $|\pa \Omega|$, respectively.  So we now consider the third heat trace coefficient.      By Proposition \ref{prop:heat_trace_expansion_of_smoothly_bounded_domains_and_polygons}, $a_0(\Omega_k) = 1/6$ for every $k$. We will show that $a_0(\Omega) > 1/6$, from which the result follows. If we denote the interior angles of $\Omega$ by $\gamma_1$,\dots,$\gamma_n$, then by Proposition \ref{prop:heat_trace_expansion_of_smoothly_bounded_domains_and_polygons}
\begin{align*}
    a_0(\Omega) &= \sum_{i=1}^n \frac{\pi^2 - \gamma_i^2}{24\pi\gamma_i} = \frac{\pi}{24}\sum_{i=1}^n \frac{1}{\gamma_i} - \frac{1}{24\pi} \sum_{i=1}^n \gamma_i \\ &= \frac{\pi}{24}\sum_{i=1}^n \frac{1}{\gamma_i} - \frac{1}{24\pi} \pi(n-2) = \frac{\pi}{24}\sum_{i=1}^n \frac{1}{\gamma_i} - \frac{n-2}{24}.
\end{align*}
By Cauchy-Schwarz's inequality,
\begin{equation*}
    n^2 \leq \sum_{i=1}^n \frac{1}{\gamma_i} \sum_{i=1}^n \gamma_i = \sum_{i=1}^n \frac{1}{\gamma_i} \pi(n-2),
\end{equation*}
so that
\begin{equation*}
    \sum_{i=1}^n \frac{1}{\gamma_i} \geq \frac{n^2}{\pi(n-2)}.
\end{equation*}
Thus,
\begin{equation}
    a_0(\Omega) \geq \frac{\pi}{24} \frac{n^2}{\pi(n-2)} - \frac{n-2}{24} 
    = \frac{1}{6} + \frac{1}{6(n-2)} > \frac{1}{6}. \label{eq:a0_lower}
\end{equation}
\end{proof}

It is interesting to note that if instead we approximate a smoothly bounded domain by polygonal domains, this third heat trace coefficient of the polygonal domains \em converges \em to that of the smoothly bounded domain.  

\begin{theorem}[See also \cite{maardby2023mathematics}]
    Let $\{\Omega_k\}$ be a sequence of $N_k$-sided convex polygons with interior angles $\gamma_{k,j}$, for $ k \geq 1$ and $1 \leq j \leq N_k$. Assume that $\overline{\Omega_k} \to \overline{\Omega}$ in Hausdorff, with $\Omega$ being a nonempty smoothly bounded convex domain. Then the first three heat trace coefficients of $\Omega_k$ converge to those of $\Omega$. 
\end{theorem}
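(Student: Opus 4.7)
The plan is to prove convergence of each of the three heat trace coefficients separately. For the leading two, $a_{-1} = |\Omega|/(4\pi)$ and $a_{-1/2} = -|\partial \Omega|/(8\sqrt{\pi})$, the convergence is immediate from the classical fact that area and perimeter are continuous functionals on the space of convex bodies equipped with the Hausdorff metric. Concretely, support functions converge uniformly on $S^1$, and both area and perimeter of a planar convex body are expressible as continuous functionals of the support function (the latter via Cauchy's formula). Thus $|\Omega_k| \to |\Omega|$ and $|\partial \Omega_k| \to |\partial \Omega|$, yielding $a_{-1}(\Omega_k) \to a_{-1}(\Omega)$ and $a_{-1/2}(\Omega_k) \to a_{-1/2}(\Omega)$.

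For the third coefficient, Proposition \ref{prop:heat_trace_expansion_of_smoothly_bounded_domains_and_polygons} gives $a_0(\Omega) = 1/6$ since $\Omega$ is convex and smoothly bounded, while
\[
a_0(\Omega_k) = \sum_{j=1}^{N_k} \frac{\pi^2 - \gamma_{k,j}^2}{24\pi\gamma_{k,j}}.
\]
Introducing the exterior angles $\epsilon_{k,j} := \pi - \gamma_{k,j} \in (0,\pi)$, we have $\sum_j \epsilon_{k,j} = 2\pi$ (total turning of a convex polygon) and may rewrite
\[
a_0(\Omega_k) = \sum_{j=1}^{N_k} f(\epsilon_{k,j}), \qquad f(x) = \frac{x(2\pi - x)}{24\pi(\pi - x)} = \frac{x}{12\pi} + \frac{x^2}{24\pi^2} + O(x^3),
\]
the Taylor expansion being uniform on any interval $[0, \pi - \delta]$. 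Provided the maximum exterior angle vanishes, $\max_j \epsilon_{k,j} \to 0$, we obtain
\[
a_0(\Omega_k) \;=\; \frac{1}{12\pi}\sum_j \epsilon_{k,j} \;+\; O\!\left(\max_j \epsilon_{k,j}\right)\sum_j \epsilon_{k,j} \;=\; \frac{1}{6} + o(1),
\]
as desired.

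The main obstacle, and the only genuinely nontrivial step, is establishing $\max_j \epsilon_{k,j} \to 0$. My plan is to argue by contradiction via a uniqueness-of-tangent-line argument. Suppose, along some subsequence, there exist vertices $v_k \in \partial \Omega_k$ with exterior angle $\epsilon_k \geq c > 0$. By compactness, a further subsequence yields $v_k \to v$. Since $\Omega$ has smooth boundary and hence positive inradius, the $\Omega_k$ have inradius uniformly bounded below for large $k$, and Hausdorff convergence then forces $\partial \Omega_k \to \partial \Omega$ in Hausdorff distance; in particular $v \in \partial \Omega$. At each $v_k$ the polygon admits two support lines $L_k^+$ and $L_k^-$ with outward normals differing by at least $c$. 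By compactness of $S^1$, pass to subsequential limits of these normals to obtain two support lines $L^\pm$ to $\Omega$ passing through $v$ with outward normals still separated by at least $c$. This contradicts the uniqueness of the support line to $\Omega$ at the smooth boundary point $v$, completing the argument.
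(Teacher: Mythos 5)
Your proof is correct, and the computation of $a_0$ is in substance the same as the paper's. Writing $\gamma_{k,j}=\pi(1-f(k,j))$, the paper arrives at $a_0(\Omega_k)=\tfrac16+\tfrac{1}{24}\sum_j \frac{f(k,j)^2}{1-f(k,j)}$; with $f(k,j)=\epsilon_{k,j}/\pi$ this remainder is exactly $\sum_j \epsilon_{k,j}^2/\bigl(24\pi(\pi-\epsilon_{k,j})\bigr)$, i.e.\ precisely the error term left after your linear Taylor term $\sum_j \epsilon_{k,j}/(12\pi)=\tfrac16$, and both arguments then bound it by a constant times $\bigl(\max_j\epsilon_{k,j}\bigr)\cdot\sum_j\epsilon_{k,j}$ with $\sum_j\epsilon_{k,j}=2\pi$. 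The one point where you genuinely diverge is the key geometric input $\max_j\epsilon_{k,j}\to 0$: the paper simply cites \cite[Lemma 4.7]{gmjr_survey}, whereas you supply a self-contained proof by contradiction, extracting convergent subsequences of vertices and of the two extreme outward normals at each bad vertex, passing to the limit via uniform convergence of support functions, and contradicting uniqueness of the support line at a smooth boundary point of $\Omega$. That argument is sound (the limit point automatically lies on $\partial\Omega$ because it lies on a support line of $\Omega$, and two support directions separated by a fixed angle $c>0$ cannot coexist at a point of a $C^1$ boundary), and it buys you independence from the external lemma at the cost of a short compactness argument. The paper additionally proves $N_k\to\infty$, which neither its estimate nor yours actually uses; your treatment of the first two coefficients via continuity of area and perimeter on convex bodies matches what the paper asserts without detail.
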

\begin{proof}
The first two heat trace coefficients converge thanks to the assumptions of Hausdorff convergence and convexity.  By \cite[Lemma 4.7]{gmjr_survey}, the interior angles $\gamma_{k,j}$ all tend to $\pi$ as the polygons tend to the smoothly bounded domain in Hausdorff convergence. Next, we show that $N_k \to \infty$ as $k \to \infty$. Suppose instead that there is an $M > 0$ such that $N_k \leq M$ for all $k$. Since the angles all tend to $\pi$, there is an $N \geq 1$ such that $\gamma_{k,j} > \pi - \frac{2\pi}{M}$ for all $k \geq N$ and $1 \leq j \leq N_k$. Then, for $k \geq N$,
\begin{equation*}
    \pi(N_k - 2) = \sum_{j = 1}^{N_k} \gamma_{k,j} > N_k\left(\pi - \frac{2\pi}{M}\right),
\end{equation*}
which implies that $N_k > M$, a contradiction. 

Now, the term $a_0$ for each $k$ is
\[ a_0 (\Omega_k) = \frac \pi {24} \sum_{k=1} ^{N_k} \frac{1}{\gamma_{k,j}} - \frac{N_k}{24} + \frac {1}{12}.\]
Following the proof of \cite[Thm. 4.4.1]{maardby2023mathematics}, we can write $\gamma_{k,j} = \pi(1-f(k,j))$, $k \geq 1, \,\, 1 \leq j \leq N_k$, from which it follows that $\sum_{j=1}^{N_k} f(k,j) = 2$ for every $k$ and
\begin{equation*}
    a_0(\Omega_k) = \frac{1}{6} + \frac{1}{24}\sum_{j=1}^{N_k} \frac{f(k,j)^2}{1-f(k,j)}.
\end{equation*}
If we then write 
\begin{equation*}
    \epsilon_k = \max_{1 \leq j \leq N_k} f(k,j),
\end{equation*}
then $\epsilon_k \to 0$ because the angles tend to $\pi$.  We therefore obtain that 
\begin{equation*}
  0 \leq  \sum_{j=1}^{N_k} \frac{f(k,j)^2}{1-f(k,j)} \leq \frac{\epsilon_k}{1-\epsilon_k} \sum_{j=1}^{N_k} f(k,j) = \frac{2\epsilon_k}{1-\epsilon_k} \to 0 \textrm{ as } k \to \infty.
\end{equation*}
Thus, $a_0 (\Omega_k) \to \frac 1 6$ $= a_0 (\Omega)$ as $k \to \infty$.  
\end{proof}

There are numerous modes of geometric convergence for domains, Riemannian manifolds, and more general types of possibly singular spaces.  Under different modes of convergence, one can study the behavior of spectral invariants of a sequence and compare to those of the limit space, as long as it is possible to define a Laplace spectrum on the elements of the sequence and also on the limit space.  Interestingly, one can define a Laplace spectrum on very singular spaces, including but not limited to noncollapsed limits under Gromov-Hausdorff convergence \cite{cc3}, rough Riemannian manifolds \cite{roughweyl}, and RCD spaces \cite{rcdweyl}.  In some cases, it is even possible to define notions of curvature, from which one could hope to obtain higher order heat trace invariants.  As a first step, one could investigate the convergence of the most elementary spectral invariants:  the individual eigenvalues.  Convergence of individual eigenvalues under Gromov-Hausdorff convergence to noncollapsed limits of compact manifolds with Ricci curvature bounded below was shown by Cheeger and Colding \cite{cc3}.  In the same setting, the associated heat kernels also converge \cite{ding}.  However, the convergence of other spectral invariants can be much more subtle, because in essence it could involve several limiting processes that need not commute.  In the simple setting of Hausdorff convergence of planar domains, if we remove the assumption of convexity, a quantity as simple as the perimeters of the domains need not converge!  There are many interesting problems one could study in the general field of spectral geometry, exploring  relationships between the Laplace spectrum and the underlying geometry, and we welcome both newcomers and seasoned researchers to join us in exploring!

\begin{bibdiv}
\begin{biblist}

\bib{aldana2018polyakov}{article}{
      author={Aldana, CL},
      author={Rowlett, J},
       title={A polyakov formula for sectors},
        date={2018},
     journal={The Journal of Geometric Analysis},
      volume={28},
       pages={1773\ndash 1839},
}

\bib{rcdweyl}{article}{
      author={Ambrosio, L},
      author={Honda, S},
      author={Tewodrose, D},
       title={Short-time behavior of the heat kernel and {W}eyl's law on {${\rm RCD}^*(K,N)$} spaces},
        date={2018},
        ISSN={0232-704X,1572-9060},
     journal={Ann. Global Anal. Geom.},
      volume={53},
      number={1},
       pages={97\ndash 119},
         url={https://doi.org/10.1007/s10455-017-9569-x},
      review={\MR{3746517}},
}

\bib{aronovitch2012nodal}{article}{
      author={Aronovitch, A},
      author={Band, R},
      author={Fajman, D},
      author={Gnutzmann, S},
       title={Nodal domains of a non-separable problem—the right-angled isosceles triangle},
        date={2012},
     journal={Journal of Physics A: Mathematical and Theoretical},
      volume={45},
      number={8},
       pages={085209},
}

\bib{bailey1961removal}{thesis}{
      author={Bailey, PB},
       title={Removal of the log factor in the estimates of the membrane eigenvalues'},
        type={Ph.D. Thesis},
        date={1961},
}

\bib{bailey1962removal}{article}{
      author={Bailey, PB},
      author={Brownell, FH},
       title={Removal of the log factor in the asymptotic estimates of polygonal membrane eigenvalues},
        date={1962},
     journal={Journal of Mathematical Analysis and Applications},
      volume={4},
      number={2},
       pages={212\ndash 239},
}

\bib{roughweyl}{article}{
      author={Bandara, L},
      author={Nursultanov, M},
      author={Rowlett, J},
       title={Eigenvalue asymptotics for weighted {L}aplace equations on rough {R}iemannian manifolds with boundary},
        date={2021},
        ISSN={0391-173X,2036-2145},
     journal={Ann. Sc. Norm. Super. Pisa Cl. Sci. (5)},
      volume={22},
      number={4},
       pages={1843\ndash 1878},
      review={\MR{4360605}},
}

\bib{brownell1957improved}{article}{
      author={Brownell, FH},
       title={Improved error estimates for the asymptotic eigenvalue distribution of the membrane problem for polygonal boundaries},
        date={1957},
     journal={Bull. Amer. Math. Soc},
      volume={63},
       pages={284},
}

\bib{cc3}{article}{
      author={Cheeger, J},
      author={Colding, TH},
       title={On the structure of spaces with {R}icci curvature bounded below. {III}},
        date={2000},
        ISSN={0022-040X,1945-743X},
     journal={J. Differential Geom.},
      volume={54},
      number={1},
       pages={37\ndash 74},
         url={http://projecteuclid.org/euclid.jdg/1214342146},
      review={\MR{1815411}},
}

\bib{colbrook}{article}{
      author={Colbrook, MJ},
      author={Fokas, AS},
       title={Computing eigenvalues and eigenfunctions of the laplacian for convex polygons},
        date={2018},
     journal={Applied Numerical Mathematics},
      volume={126},
       pages={1–\ndash 17},
}

\bib{damle2010understanding}{article}{
      author={Damle, A},
      author={Peterson, GC},
      author={Curry, J},
      author={Dougherty, A},
       title={Understanding the eigenstructure of various triangles},
        date={2010},
     journal={SIAM Undergraduate Research Online},
      volume={3},
      number={1},
       pages={187\ndash 208},
}

\bib{ding}{article}{
      author={Ding, Y},
       title={Heat kernels and {G}reen's functions on limit spaces},
        date={2002},
        ISSN={1019-8385,1944-9992},
     journal={Comm. Anal. Geom.},
      volume={10},
      number={3},
       pages={475\ndash 514},
         url={https://doi.org/10.4310/CAG.2002.v10.n3.a3},
      review={\MR{1912256}},
}

\bib{NIST:DLMF}{misc}{
       title={{\it NIST Digital Library of Mathematical Functions}},
         how={\url{https://dlmf.nist.gov/}, Release 1.2.1 of 2024-06-15},
         url={https://dlmf.nist.gov/},
        note={F.~W.~J. Olver, A.~B. {Olde Daalhuis}, D.~W. Lozier, B.~I. Schneider, R.~F. Boisvert, C.~W. Clark, B.~R. Miller, B.~V. Saunders, H.~S. Cohl, and M.~A. McClain, eds.},
}

\bib{durso1988inverse}{thesis}{
      author={Durso, C},
       title={On the inverse spectral problem for polygonal domains},
        type={Ph.D. Thesis},
        date={1988},
}

\bib{folland1999real}{book}{
      author={Folland, GB},
       title={Real analysis: modern techniques and their applications},
   publisher={John Wiley \& Sons},
        date={1999},
      volume={40},
}

\bib{folland2009fourier}{book}{
      author={Folland, GB},
       title={Fourier analysis and its applications},
   publisher={American Mathematical Soc.},
        date={2009},
      volume={4},
}

\bib{gottlieb1988eigenvalues}{article}{
      author={Gottlieb, HPW},
       title={Eigenvalues of the laplacian for rectilinear regions},
        date={1988},
     journal={The ANZIAM Journal},
      volume={29},
      number={3},
       pages={270\ndash 281},
}

\bib{gutkin1986billiards}{article}{
      author={Gutkin, E},
       title={Billiards in polygons},
        date={1986},
     journal={Physica D: Nonlinear Phenomena},
      volume={19},
      number={3},
       pages={311\ndash 333},
}

\bib{hezari2021dirichlet}{article}{
      author={Hezari, H},
      author={Lu, Z},
      author={Rowlett, J},
       title={The dirichlet isospectral problem for trapezoids},
        date={2021},
     journal={Journal of Mathematical Physics},
      volume={62},
      number={5},
}

\bib{kac1966can}{article}{
      author={Kac, M},
       title={Can one hear the shape of a drum?},
        date={1966},
     journal={The american mathematical monthly},
      volume={73},
      number={4P2},
       pages={1\ndash 23},
}

\bib{kuttler1984eigenvalues}{article}{
      author={Kuttler, JR},
      author={Sigillito, VG},
       title={Eigenvalues of the laplacian in two dimensions},
        date={1984},
     journal={Siam Review},
      volume={26},
      number={2},
       pages={163\ndash 193},
}

\bib{lame1833memoire}{book}{
      author={Lam{\'e}, G},
       title={M{\'e}moire sur la propagation de la chaleur dans les poly{\`e}dres, et principalement dans le prisme triangulaire r{\'e}gulier},
        date={1833},
}

\bib{lame1852leccons}{book}{
      author={Lam{\'e}, G},
       title={Le{\c{c}}ons sur la th{\'e}orie math{\'e}matique de l'{\'e}lasticit{\'e} des corps solides},
   publisher={Bachelier},
        date={1852},
}

\bib{lame1861leccons}{book}{
      author={Lam{\'e}, G},
       title={Le{\c{c}}ons sur la th{\'e}orie analytique de la chaleur},
   publisher={Mallet-Bachelier},
        date={1861},
}

\bib{larsson2003partial}{book}{
      author={Larsson, S},
      author={Thom{\'e}e, V},
       title={Partial differential equations with numerical methods},
   publisher={Springer},
        date={2003},
      volume={45},
}

\bib{lee2018introduction}{book}{
      author={Lee, JM},
       title={Introduction to riemannian manifolds},
   publisher={Springer},
        date={2018},
      volume={2},
}

\bib{maardby2023mathematics}{thesis}{
      author={M{\aa}rdby, G},
       title={The mathematics of hearing the shape of a drum and filling kac's holes},
        type={Master's Thesis},
        date={2023},
}

\bib{gmjr_survey}{misc}{
      author={M{\aa}rdby, G},
      author={Rowlett, J},
       title={112 years of listening to riemannian manifolds},
        note={https://arxiv.org/abs/2406.18369},
}

\bib{mccartin2002eigenstructure}{article}{
      author={McCartin, B},
       title={Eigenstructure of the equilateral triangle, part ii: The neumann problem},
        date={2002},
     journal={Mathematical Problems in Engineering},
      volume={8},
      number={6},
       pages={517\ndash 539},
}

\bib{mccartin2003eigenstructure}{article}{
      author={McCartin, B},
       title={Eigenstructure of the equilateral triangle, part i: The dirichlet problem},
        date={2003},
     journal={Siam Review},
      volume={45},
      number={2},
       pages={267\ndash 287},
}

\bib{mccartin2004eigenstructure}{article}{
      author={McCartin, B},
       title={Eigenstructure of the equilateral triangle. part iii. the robin problem},
        date={2004},
     journal={International Journal of Mathematics and Mathematical Sciences},
      volume={2004},
      number={16},
       pages={807\ndash 825},
}

\bib{mccartin2008polygonal}{article}{
      author={McCartin, B},
       title={On polygonal domains with trigonometric eigenfunctions of the laplacian under dirichlet or neumann boundary conditions},
        date={2008},
     journal={Applied Mathematical Sciences},
      volume={2},
      number={58},
       pages={2891\ndash 2901},
}

\bib{mccartin2011laplacian}{book}{
      author={McCartin, B},
       title={Laplacian eigenstructure of the equilateral triangle},
   publisher={Hikari, Limited},
        date={2011},
}

\bib{morse1946methods}{book}{
      author={Morse, PM},
      author={Feshbach, H},
       title={Methods of theoretical physics},
   publisher={Technology Press},
        date={1946},
}

\bib{nursultanov2019heat}{incollection}{
      author={Nursultanov, M},
      author={Rowlett, J},
      author={Sher, D},
       title={How to hear the corners of a drum},
        date={2019},
   booktitle={2017 {MATRIX} annals},
      series={MATRIX Book Ser.},
      volume={2},
   publisher={Springer, Cham},
       pages={243\ndash 278},
      review={\MR{3931070}},
}

\bib{osgood1988extremals}{article}{
      author={Osgood, B},
      author={Phillips, R},
      author={Sarnak, P},
       title={Extremals of determinants of laplacians},
        date={1988},
     journal={Journal of functional analysis},
      volume={80},
      number={1},
       pages={148\ndash 211},
}

\bib{pinsky1980eigenvalues}{article}{
      author={Pinsky, MA},
       title={The eigenvalues of an equilateral triangle},
        date={1980},
     journal={SIAM Journal on Mathematical Analysis},
      volume={11},
      number={5},
       pages={819\ndash 827},
}

\bib{pinsky1985completeness}{article}{
      author={Pinsky, MA},
       title={Completeness of the eigenfunctions of the equilateral triangle},
        date={1985},
     journal={SIAM journal on mathematical analysis},
      volume={16},
      number={4},
       pages={848\ndash 851},
}

\bib{pockels1891uber}{book}{
      author={Pockels, F},
       title={Uber die partielle differentialgleichung [delta] u+ k2u},
   publisher={BG Teubner},
        date={1891},
}

\bib{prager1998eigenvalues}{article}{
      author={Pr{\'a}ger, M},
       title={Eigenvalues and eigenfunctions of the laplace operator on an equilateral triangle},
        date={1998},
     journal={Applications of mathematics},
      volume={43},
      number={4},
       pages={311\ndash 320},
}

\bib{rowlett2021crystallographic}{article}{
      author={Rowlett, J},
      author={Blom, M},
      author={Nordell, H},
      author={Thim, O},
      author={Vahnberg, J},
       title={Crystallographic groups, strictly tessellating polytopes, and analytic eigenfunctions},
        date={2021},
     journal={The American Mathematical Monthly},
      volume={128},
      number={5},
       pages={387\ndash 406},
}

\bib{rudin1964principles}{book}{
      author={Rudin, W},
       title={Principles of mathematical analysis},
   publisher={McGraw-hill New York},
        date={1964},
      volume={3},
}

\bib{selberg1967epstein}{article}{
      author={Selberg, A},
      author={Chowla, S},
       title={On epstein's zeta-function.},
        date={1967},
}

\bib{van1988heat}{article}{
      author={van~den Berg, M},
      author={Srisatkunarajah, S},
       title={Heat equation for a region in r2 with a polygonal boundary},
        date={1988},
     journal={Journal of the London Mathematical Society},
      volume={2},
      number={1},
       pages={119\ndash 127},
}

\bib{verhoevencan}{article}{
      author={Verhoeven, L},
       title={Can one hear the shape of a drum?},
}

\bib{watanabe2002plane}{article}{
      author={Watanabe, K},
       title={Plane domains which are spectrally determined ii},
        date={2002},
     journal={Journal of Inequalities and Applications},
      volume={2002},
      number={1},
       pages={613103},
}

\bib{wolf1972spaces}{book}{
      author={Wolf, JA},
       title={Spaces of constant curvature},
   publisher={American Mathematical Soc.},
        date={1972},
      volume={372},
}

\end{biblist}
\end{bibdiv}

\begin{appendix} \section{Estimates} \label{appendix}
Here we show that certain quantities are bounded and therewith justify our calculations of the zeta-regularized determinants.

\begin{lemma} \label{lemma_vanishes}
For any $a,b > 0$, the quantity 
\[    \frac{d}{ds}\left[\left(\frac{ab}{\pi}\right)^s\sqrt{\frac{a}{b}}\sum_{n=1}^\infty n^{s-1/2}\sum_{d|n}d^{1-2s} \int_0^\infty x^{s-3/2}e^{-\pi an(x+x^{-1})/b} dx\right].\]
is bounded in a neighborhood of $s=0$. In particular,
\begin{equation*}
    \lim_{s \to 0} \frac{1}{\Gamma(s)}\frac{d}{ds}\left[\left(\frac{ab}{\pi}\right)^s\sqrt{\frac{a}{b}}\sum_{n=1}^\infty n^{s-1/2}\sum_{d|n}d^{1-2s} \int_0^\infty x^{s-3/2}e^{-\pi an(x+x^{-1})/b} dx\right] = 0.
\end{equation*}
\end{lemma}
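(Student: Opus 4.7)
The plan is to show that the bracketed quantity, which we denote $F(s)$, extends to an entire function of $s$. Once this is established, $F'(s)$ is in particular bounded on any compact neighborhood of $s=0$, which gives the first assertion of the lemma. The second assertion follows immediately: since $\Gamma$ has a simple pole at $s=0$, we have $1/\Gamma(s) \to 0$ as $s \to 0$, and the product of the bounded function $F'(s)$ with the vanishing factor $1/\Gamma(s)$ has limit zero.

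The key preliminary computation is to rewrite the inner integral as a modified Bessel function of complex order, extending identity \eqref{eq:miracle_integral} (which the paper establishes only at $s=0$) to all $s \in \mathbb{C}$. Substituting $x = e^t$ and using symmetry in $t$ as in the paper yields
\[
\int_0^\infty x^{s-3/2} e^{-\pi an(x+x^{-1})/b}\,dx
= 2\int_0^\infty \cosh\bigl((s-\tfrac{1}{2})t\bigr)\, e^{-(2\pi an/b)\cosh t}\,dt
= 2 K_{s-1/2}(2\pi an/b),
\]
by the integral representation in \cite[Eq.~10.32.9]{NIST:DLMF}. For each fixed $n \geq 1$ this is an entire function of $s$. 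To pass from individual terms to the full series, I would combine two uniform estimates on an arbitrary compact set $\mathcal{K} \subset \mathbb{C}$: the standard large-argument asymptotic $K_\nu(z) \sim \sqrt{\pi/(2z)}\,e^{-z}$ as $z \to \infty$, valid uniformly for $\nu$ in compact subsets of $\mathbb{C}$ (see \cite[\S 10.40]{NIST:DLMF}), which furnishes a constant $C_{\mathcal{K}}$ with $|K_{s-1/2}(2\pi an/b)| \leq C_{\mathcal{K}}\, n^{-1/2} e^{-2\pi an/b}$ for all $n \geq 1$ and $s \in \mathcal{K}$; and the elementary divisor bound $|\sum_{d|n} d^{1-2s}| \leq \sigma_{1+2|\Re s|}(n)$, which is $O_{\mathcal{K}}(n^{A})$ for some exponent $A = A(\mathcal{K})$. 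Multiplied with the polynomial factor $|n^{s-1/2}|$, these produce a summable majorant of the form $C\, n^{A'} e^{-2\pi an/b}$ for each term of the series defining $F(s)$, uniformly in $s \in \mathcal{K}$. A Weierstrass-type argument then concludes that $F$ is entire.

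The main obstacle, and the step requiring genuine care rather than routine estimation, is the uniformity in the order $\nu$ of the large-argument asymptotic for $K_\nu$; this is classical but must be cited precisely. With that in place, every remaining step --- the interchange of differentiation with integral and sum, the holomorphy of $F$, and the final limit via $1/\Gamma(s) \to 0$ --- is automatic from entire-function theory, and the conclusion of the lemma drops out.
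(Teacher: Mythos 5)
Your proof is correct, and it takes a genuinely different route from the paper. The paper works entirely with real $s\in(-1,1)$: it justifies differentiation under the integral sign by exhibiting an explicit dominating function and invoking dominated convergence, then applies the Weierstrass M-test to the termwise-differentiated series using elementary bounds on the integrals $\int_0^\infty x^{s-3/2}e^{-\pi an(x+x^{-1})/b}\,dx$ obtained by splitting at $x=1$ and crude comparisons. You instead identify that integral as $2K_{s-1/2}(2\pi an/b)$ --- the natural extension to general $s$ of the identity \eqref{eq:miracle_integral}, which the paper only records at $s=0$ --- observe that each term is entire in $s$, and deduce that the whole bracketed quantity $F(s)$ is entire from locally uniform convergence; boundedness of $F'$ near $0$ and the final limit via $1/\Gamma(s)\to 0$ are then automatic from Weierstrass's theorem on series of holomorphic functions. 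Your approach is shorter and proves strictly more (holomorphy on all of $\C$ rather than differentiability on a real interval), at the price of citing the uniformity in the order of the large-argument Bessel asymptotics. One remark: you can sidestep that citation entirely. From \cite[Eq.~10.32.9]{NIST:DLMF} and $\cosh t\geq 1+t^2/2$ one gets, for $|\nu|\leq R$ and $z\geq z_0>0$,
\begin{equation*}
|K_{\nu}(z)|\leq \int_0^\infty e^{|\nu|t}e^{-z\cosh t}\,dt\leq e^{-z}\int_0^\infty e^{|\nu|t-zt^2/2}\,dt\leq e^{-z}\,e^{\nu^2/(2z_0)}\sqrt{2\pi/z},
\end{equation*}
which is exactly the majorant $C_{\mathcal K}\,n^{-1/2}e^{-2\pi an/b}$ you need, obtained elementarily and uniformly for $s$ in a compact set. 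With that substitution your argument is fully self-contained and, in my view, cleaner than the paper's.
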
 
\begin{proof}
Since
\begin{align*}
    &\frac{d}{ds}\left[\left(\frac{ab}{\pi}\right)^s\sqrt{\frac{a}{b}}\sum_{n=1}^\infty n^{s-1/2}\sum_{d|n}d^{1-2s} \int_0^\infty x^{s-3/2}e^{-\pi an(x+x^{-1})/b} dx\right] \\
    &= \left(\frac{ab}{\pi}\right)^s\log\left(\frac{ab}{\pi}\right)\sqrt{\frac{a}{b}}\sum_{n=1}^\infty n^{s-1/2}\sum_{d|n}d^{1-2s} \int_0^\infty x^{s-3/2}e^{-\pi an(x+x^{-1})/b} dx \\
    &+ \left(\frac{ab}{\pi}\right)^s\sqrt{\frac{a}{b}}\frac{d}{ds}\left[\sum_{n=1}^\infty n^{s-1/2}\sum_{d|n}d^{1-2s} \int_0^\infty x^{s-3/2}e^{-\pi an(x+x^{-1})/b} dx\right],
\end{align*}
it is enough to show that
\begin{equation*}
    \frac{d}{ds}\left[\sum_{n=1}^\infty n^{s-1/2}\sum_{d|n}d^{1-2s} \int_0^\infty x^{s-3/2}e^{-\pi na(x+x^{-1})/b} dx\right]
\end{equation*}
is bounded in a neighborhood of zero, say $s \in (-1, 1)$. We will in fact show that we may differentiate termwise and differentiate under the integral sign, from which the lemma will follow. Let
\begin{equation*}
    f_N(s) = \sum_{n=1}^N n^{s-1/2}\sum_{d|n}d^{1-2s} \int_0^\infty x^{s-3/2}e^{-\pi an(x+x^{-1})/b} dx, \,\, N \geq 1, \,\, s \in (-1,1).
\end{equation*}
By definition of infinite sums, $f_N$ converges pointwise to
\begin{equation*}
    f(s) = \sum_{n=1}^\infty n^{s-1/2}\sum_{d|n}d^{1-2s}\int_0^\infty x^{s-3/2}e^{-\pi an(x+x^{-1})/b} dx.
\end{equation*}
Now, we need to show that $f_N'(s)$ converges uniformly to some function. We have
\begin{align*}
    f_N'(s) &= \sum_{n=1}^N n^{s-1/2}\sum_{d|n}d^{1-2s}(\log(n) - 2\log(d))\int_0^\infty x^{s-3/2}e^{-\pi an(x+x^{-1})/b} dx \\
    &+ \sum_{n=1}^N n^{s-1/2}\sum_{d|n}d^{1-2s} \frac{d}{ds}\left[\int_0^\infty x^{s-3/2}e^{-\pi an(x+x^{-1})/b} dx\right].
\end{align*}
To proceed, we want to show that we may differentiate under the integral sign. Let 
\begin{equation*}
    h(s,x) = x^{s-3/2}e^{-\pi an(x+x^{-1})/b}, \,\, s \in (-1,1), \,\, x \in (0,\infty).
\end{equation*}
Fix $s$. Since $h(s,x) \to 0$ as $x \to 0$ and $h(s,x)$ decays exponentially as $x \to \infty$, it follows that $h(s,x)$ is Lebesgue-integrable over $x \in (0,\infty)$. Moreover,
\begin{equation*}
    \frac{\partial h}{\partial s} = \log(x)x^{s-3/2}e^{-\pi an(x+x^{-1})/b}
\end{equation*}
exists for all $s \in (-1,1)$ and $x \in (0,\infty)$. Finally, let
\begin{equation*}
    \theta(x) = 
    \begin{cases}
        \log(x)x^{-5/2}e^{-\pi an(x+x^{-1})/b}, \,\, 0 < x < 1, \\
        \log(x)e^{-\pi an(x+x^{-1})/b}, \,\, x \geq 1.
    \end{cases}
\end{equation*}
By construction we have $|\frac{\partial h}{\partial s}| \leq \theta(x)$ for all $s \in (-1,1)$ and $x \in (0,\infty)$, and $\theta(x)$ is Lebesgue-integrable over $x \in (0,\infty)$ by the same arguments as for $h(s,x)$. Thus, it follows from \cite[Thm. 2.27]{folland1999real} that
\begin{equation*}
    \frac{d}{ds}\left[\int_0^\infty x^{s-3/2}e^{-\pi an(x+x^{-1})/b} dx\right] = \int_0^\infty \log(x)x^{s-3/2}e^{-\pi an(x+x^{-1})/b} dx.
\end{equation*}
Therefore,
\begin{align*}
    f_N'(s) &= \sum_{n=1}^N n^{s-1/2}\sum_{d|n}d^{1-2s}(\log(n) - 2\log(d)) \int_0^\infty x^{s-3/2}e^{-\pi an(x+x^{-1})/b} dx \\
    &+ \sum_{n=1}^N n^{s-1/2}\sum_{d|n}d^{1-2s} \int_0^\infty \log(x)x^{s-3/2}e^{-\pi an(x+x^{-1})/b} dx.
\end{align*}
To show that $f_N'(s)$ converges uniformly, we use Weierstrass' M-test. Write $f_N'(s) = \sum_{n=1}^N g_n(s)$ where
\begin{align*}
    g_n(s) = \sum_{d|n}n^{s-1/2}d^{1-2s}\bigg[(\log(n) - 2\log(d)) \int_0^\infty x^{s-3/2}e^{-\pi an(x+x^{-1})/b} dx \\
    + \int_0^\infty \log(x)x^{s-3/2}e^{-\pi an(x+x^{-1})/b} dx\bigg].
\end{align*}
We need to bound $|g_n(s)|$ by some sequence $M_n$ such that $\sum_{n=1}^\infty M_n$ converges. We have
\begin{align*}
    |g_n(s)| &\leq \sum_{d|n}n^{s-1/2}d^{1-2s}\bigg|(\log(n) - 2\log(d)) \int_0^\infty x^{s-3/2}e^{-\pi an(x+x^{-1})/b} dx \\
    &+ \int_0^\infty \log(x)x^{s-3/2}e^{-\pi an(x+x^{-1})/b} dx\bigg| \\
    &\leq n^{9/2}\bigg[3\log(n) \int_0^\infty x^{s-3/2}e^{-\pi an(x+x^{-1})/b} dx + \int_0^\infty |\log(x)|x^{s-3/2}e^{-\pi an(x+x^{-1})/b} dx\bigg].
\end{align*}
To obtain a bound on the first integral, we compute
\begin{align*}
    \int_0^\infty x^{s-3/2}e^{-\pi an(x+x^{-1})/b} dx &= \int_0^1 x^{s-3/2}e^{-\pi an(x+x^{-1})/b} dx + \int_1^\infty x^{s-3/2}e^{-\pi an(x+x^{-1})/b} dx \\
    &\leq \int_0^1 x^{-5/2}e^{-\pi an(x+x^{-1})/b} dx + \int_1^\infty e^{-\pi an(x+x^{-1})/b} dx \\
    &\leq \int_0^1 x^{-3}e^{-\pi anx^{-1}/b} dx + \int_1^\infty e^{-\pi anx/b} dx \\
    &= \frac{b}{\pi a n}\left(2 + \frac{b}{\pi a n}\right)e^{-\pi a n/b}.
\end{align*}
Similarly for the second integral,
\begin{align*}
    \int_0^\infty |\log(x)|x^{s-3/2}e^{-\pi an(x+x^{-1})/b} dx &\leq \int_0^1 |\log(x)|x^{-3}e^{-\pi anx^{-1}/b} dx + \int_1^\infty \log(x)e^{-\pi anx/b} dx \\
    & \leq \int_0^1 x^{-4}e^{-\pi anx^{-1}/b} dx + \int_1^\infty xe^{-\pi anx/b} dx \\ &= \frac{b}{\pi an}\left(2 + \frac{3b}{\pi an} + \frac{2b^2}{(\pi an)^2}\right)e^{-\pi an/b}.
\end{align*}
Thus,
\begin{equation} \label{eq:g_n_estimate}
    |g_n(s)| \leq n^{9/2}\bigg[\frac{3b}{\pi a }\left(2 + \frac{b}{\pi a n}\right) + \frac{b}{\pi an}\left(2 + \frac{3b}{\pi an} + \frac{2b^2}{(\pi an)^2}\right)\bigg]e^{-\pi a n/b}.% \\
    %&\leq 2n^{5/2}(n+1)^2\bigg[ \frac{3}{\pi \sqrt{3}}\left(\frac{1}{\pi n\sqrt{3}} + 1\right) + \frac{3\pi n(\pi n\sqrt{3} + 3) + 4\sqrt{3}}{9\pi^3 n^3}\bigg]e^{-\pi n\sqrt{3}/2}.
\end{equation}
In particular, there are constants $C > 0$ and $M \geq 1$ such that $|g_n(s)| \leq Cn^Me^{-\pi an/b}$ for all $n \geq 1$ and $s \in (-1,1)$. Since
\begin{equation*}
    \sum_{n = 1}^\infty Cn^Me^{-\pi an/b}
\end{equation*}
converges, it follows from Weierstrass' M-test that $f_N'(s)$ converges uniformly on $(-1,1)$. This in turn implies that we can differentiate $f$ termwise (see e.g. \cite[Thm. 7.17]{rudin1964principles}), i.e.
\begin{align*}
    &\frac{d}{ds}\left[\sum_{n=1}^\infty n^{s-1/2}\sum_{d|n}d^{1-2s} \int_0^\infty x^{s-3/2}e^{-\pi an(x+x^{-1})/b} dx\right] \\
    &= \sum_{n=1}^\infty n^{s-1/2}\sum_{d|n}d^{1-2s}(\log(n)-2\log(d)) \int_0^\infty x^{s-3/2}e^{-\pi an(x+x^{-1})/b} dx \\
    &+ \sum_{n=1}^\infty n^{s-1/2}\sum_{d|n}d^{1-2s} \int_0^\infty \log(x)x^{s-3/2}e^{-\pi an(x+x^{-1})/b} dx.
\end{align*}
In particular, we can by \eqref{eq:g_n_estimate} conclude that the derivative is bounded for $s \in (-1,1)$.%, which shows that

\end{proof} 
\end{appendix}

\end{document}